\newtheorem{thm}{Theorem}[section]
\newtheorem{lem}[thm]{Lemma}
\newtheorem{prop}[thm]{Proposition}
\newtheorem{cor}[thm]{Corollary}
\newtheorem{rem}[thm]{Remark}
\numberwithin{equation}{section}
\newcommand{\rt}{\mathbb{R}^2}
\newcommand{\ct}{\mathbb{C}^2}
\newcommand{\les}{\lesssim}
\newcommand{\lam}{{\lambda}}
\newcommand{\gam}{{\gamma}}
\newcommand{\Gam}{{\Gamma}}
\newcommand{\vp}{{\varphi}}
\newcommand{\ve}{{\varepsilon}}
\newcommand{\de}{{\delta}}
\newcommand{\al}{{\alpha}}
\newcommand{\ka}{{\kappa}}
\newcommand{\lammin}{{\lambda}_{\rm min}}
\newcommand{\lammed}{{\lambda}_{\rm med}}
\newcommand{\epm}{{e^{\pm it\left<D\right>}}}
\newcommand{\emp}{{e^{\mp it\left<D\right>}}}
\newcommand{\brad}{\left<D\right>}
\newcommand{\psii}{\psi_{\infty}^\ell}
\newcommand{\vps}{\varphi_{\infty}}
\newcommand{\vpsl}{\varphi_{\ka,r}}
\newcommand{\epd}{e^{-it\left< D\right>}}
\newcommand{\emd}{e^{it\left< D\right>}}
\begin{document}

	\title[Scattering for Dirac equations]{Small data scattering of 2d Hartree type Dirac equations}

	\author[Y. Cho]{Yonggeun Cho}
	\address{Department of Mathematics, and Institute of Pure and Applied Mathematics, Jeonbuk National University, Jeonju 54896, Republic of Korea}
	\email{changocho@jbnu.ac.kr}

	\author[K. Lee]{Kiyeon Lee}
	\address{Department of Mathematics, Jeonbuk National University, Jeonju 54896, Republic of Korea}
	\email{leeky@jbnu.ac.kr}

	\author[T. Ozawa]{Tohru Ozawa}
	\address{Department of Applied Physics, Waseda University, 3-4-1, Okubo, Shinjuku-ku, Tokyo, 169-8555, Japan}
	\email{txozawa@waseda.jp}

	\thanks{2010 {\it Mathematics Subject Classification.} 35Q55, 35Q40.}
	\thanks{{\it Keywords and phrases.} Dirac equations, Coulomb type potential, global well-posedness, small data scattering, null structure, $U^p-V^p$ space,  nonexistence of scattering.}
	
	\begin{abstract}
 In this paper, we study the Cauchy problem of 2d Dirac equation with Hartree type nonlinearity $c(|\cdot|^{-\gamma} * \langle \psi, \beta \psi\rangle)\beta\psi$ with $c\in \mathbb R\setminus\{0\} $, $0 < \gamma <  2$. Our aim is to show the small data global well-posedness and scattering in $H^s$ for $s > \gamma-1$ and $1 < \gamma < 2$. The difficulty stems from the singularity of the low-frequency part $|\xi|^{-(2-\gamma)}\chi_{\{|\xi|\le 1\}}$ of potential. To overcome it we adapt $U^p-V^p$ space argument and bilinear estimates of \cite{yang, tes2d} arising from the null structure. We also provide nonexistence result for scattering in the long-range case $0 < \gamma \le 1$.
	\end{abstract}

		\maketitle

\section{Introduction}
We consider the following Hartree type Dirac equation:
\begin{eqnarray}\label{maineq}
\left\{\begin{array}{l}
(-i\partial_t + \al\cdot D + m\beta) \psi = (V* \left< \psi,\beta\psi\right>)\beta\psi \;\; \mathrm{in}\;\; \mathbb{R}^{1+2},\\
\psi(0) = \psi_0 \in H^s(\rt),
\end{array} \right.
\end{eqnarray}
where $\psi:\mathbb{R}^{1+2} \to \ct$ is the spinor field represented by a column vector $\left(
                                                                                           \begin{array}{c}
                                                                                             \psi_1 \\
                                                                                             \psi_2 \\
                                                                                           \end{array}
                                                                                         \right)$, $D = -i\nabla$, and $\al = (\al^1, \al^2), \beta$ are the Dirac matrices defined by
\begin{align*}
 \alpha^1 = \left(\begin{array}{ll} 0 & \;\;1 \\ 1 & \;\;0 \end{array} \right) \quad \alpha^2 = \left(\begin{array}{ll} 0 & -i \\ i & \;\;0 \end{array} \right),\quad \beta = \left(\begin{array}{ll} 1 & \;\,\,\,0 \\ 0 & -1 \end{array} \right).
\end{align*}
The constant $m \ge 0$ is a physical mass parameter and the symbol $*$ denotes convolution in $\rt$. The potential $V$ is of Coulomb type and defined by $c|x|^{-\gamma}\;(0 < \gamma < 2, c \in \mathbb R \setminus \{0\})$. Throughout the paper $\left< , \right>$ denotes the inner product on $\mathbb C^2$ and $\left< \psi, \phi\right>_{L_x^2} := \int_{\mathbb R^2} \left< \psi, \phi\right>\,dx$. When $\gamma = 1$, \eqref{maineq} can be regarded as a simplified model of Chern-Simons-Dirac system in Coulomb gauge \cite{boucanma}.

It is well-known that every smooth solution to \eqref{maineq} satisfies the mass conservation: $\|\psi(t)\|_{L_x^2} = \|\psi_0\|_{L_x^2}$ for any $t$ within the existence time interval. For instance see \cite{chagla}. In the massless case ($m=0$) the equation \eqref{maineq} has the scaling invariance structure in $\dot{H}^{\frac{\gam-1}2}$. That is, the $\dot{H}^{\frac{\gam-1}2}$-scaled function $\psi_\lambda$ for $\lam > 0$ defined by $\psi_\lam(t,x) = \lam^\frac{3-\gam}{2} \psi (\lam t,\lam x)$ is also the solution to the equation \eqref{maineq}. By this reason, the \eqref{maineq} is said to be mass-critical (supercritical, subcritical) when $\gamma = (>, <)\;1$, respectively.

In this paper we use the representation of solution based on the massive Klein-Gordon equation. Thus we only consider the massive case ($m > 0$) and for simplicity we assume that $m = 1$.

Let us define the energy projection operators $\Pi_\pm(D)$ by
$$
\Pi_{\pm}(D) := \frac12 \left( I \pm \frac1{\brad}[\alpha \cdot D + \beta] \right),
$$
where $\left< D\right> = (1 - \Delta)^\frac12$.
Then we get
\begin{align}
\al\cdot D + \beta = \brad \left(\Pi_{+}(D) - \Pi_{-}(D)\right),
\end{align}
and
\begin{align}\label{proj-commu}
\Pi_{\pm}(D)\Pi_{\pm}(D) = \Pi_{\pm}(D), \;\; \Pi_{\pm}(D)\Pi_{\mp}(D) = 0.
\end{align}
We denote $\Pi_\pm(D)\psi $ by $\psi_\pm$. Then the equation \eqref{maineq} becomes the following system of semi-relativistic Hartree equations:
\begin{align}\label{maineq-decom}
 (-i\partial_t \pm \brad)\psi_\pm = \Pi_{\pm}(D)[(V* \left<\psi,\beta\psi\right>)\beta\psi]
\end{align}
with initial data $\psi_{\pm}(0,\cdot) = \psi_{0, \pm} := \Pi_{\pm}(D)\psi_0$. The free solutions of \eqref{maineq-decom} are $e^{\mp it\brad}\psi_{0, \pm}$, respectively, where
$$
e^{\mp it\langle D\rangle}f(x) = \mathcal F^{-1} e^{\mp it \langle \xi \rangle}\mathcal F f = \frac1{(2\pi)^2}\int_{\mathbb R^2} e^{i(x\cdot \xi \mp t\langle \xi \rangle)}\widehat f(\xi)\,d\xi.
$$
Here $\langle \xi \rangle := (1 + |\xi|^2)^\frac12$ and $\mathcal F, \mathcal F^{-1}$ are Fourier transform, its inverse, respectively.
Then by Duhamel's principle the Cauchy problem \eqref{maineq-decom} is equivalent to solving the integral equations:
\begin{align}\label{inteq0}
\psi_\pm(t) = e^{\mp it\langle D\rangle}\psi_{0, \pm} + i\int_0^t e^{\mp i(t-t')\langle D\rangle}\Pi_{\pm}(D)[(V*\langle \psi, \beta \psi\rangle) \beta \psi](t')\,dt'.
\end{align}
We say that the solution $\psi$ scatters forward (or backward) in a Hilbert space $\mathcal H$ if there exist $\psi^\ell$,  linear solutions to $(-i\partial_t + \alpha \cdot D + \beta)\psi = 0$, such that
\begin{align}\label{sense1}
\|\psi(t) - \psi^\ell(t)\|_{\mathcal H} \to 0\;\;\mbox{as}\;\;t \to +\infty\; (-\infty,\;\;\mbox{respectively}).
\end{align}
In most cases we may take $\mathcal H = H^s$.
Equivalently, $\psi$ is said to scatter forward ( or backward) in $\mathcal H$ if there exist $\psi_{\pm }^\ell := e^{\mp it\brad}\varphi_{\pm } \;(\varphi_{\pm} \in \mathcal H)$ such that
\begin{align}\label{sense2}
\|\psi_\pm(t) - \psi_{\pm}^\ell(t)\|_{\mathcal H} \to 0\;\;\mbox{as}\;\;t \to \pm \infty.
\end{align}

We are concerned with the low-regularity global well-posedness (GWP) and scattering with small initial data for \eqref{maineq} and we equivalently consider  \eqref{inteq0}.
For these questions the Dirac equation with Yukawa potential was studied in \cite{yang, tes2d, tes3d} and \cite{geosha}. Yang \cite{yang} and Tesfahun \cite{tes3d} showed independently scattering results on $H^s(\mathbb R^3)$ for $s>0$ with mass $m \ge 0$.  Tesfahun \cite{tes2d} established scattering in $H^s(\mathbb R^2)$ for $s > 0$ for the massive case. One can find results about power type nonlinearity in \cite{behe, behe2d}. As related topics we refer the readers to \cite{mats1, mats2}, \cite{huhoh, boucanma, holee}, and \cite{choz, coss,  hete, pusa} for the massless Dirac equation with nonlinearity $(|x|^{-\gamma}*|\psi|^{p-1})\psi$, Chern-Simons-Dirac equation in Coulomb gauge, and massive semi-relativistic equation with Coulomb potential, respectively.

Until now not much has been known about the low-regularity issue and the  corresponding scattering theory of Dirac equation \eqref{maineq} with Coulomb type potential. In this paper we suggest a 2d result on the mass-supercritical case as follows.

\begin{thm}\label{mainthm}
Let $1 < \gamma < 2$ and $s > \gam-1$. Then there exists $\ve > 0$ such that for all $\psi_0 \in H^s(\rt)$ satisfying
$$
\|\psi_0\|_{H^s} < \ve,
$$
there exists a unique global solution $\psi \in C(\mathbb{R}; H^s(\rt))$ to \eqref{maineq}. In particular, the global solution $\psi$ scatters both forward and backward in $H^s$.
\end{thm}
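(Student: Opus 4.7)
The plan is to run a $U^p$-$V^p$ contraction argument on the Duhamel formulation \eqref{inteq0} after splitting $\psi = \psi_+ + \psi_-$ by the energy projections. I would introduce Littlewood--Paley projectors $P_N$ at dyadic frequencies and work in the resolution space
\begin{equation*}
\|\psi\|_{X^s}^2 := \sum_{N} N^{2s}\bigl(\|P_N\psi_+\|_{U^2_+}^2 + \|P_N\psi_-\|_{U^2_-}^2\bigr),
\end{equation*}
where $U^2_\pm$ denotes the $U^2$ space adapted to the half-Klein--Gordon propagator $e^{\mp it\brad}$. Since $\|e^{\mp it\brad}\vp\|_{U^2_\pm} = \|\vp\|_{L^2}$, the homogeneous parts $e^{\mp it\brad}\psi_{0,\pm}$ have small $X^s$-norm whenever $\psi_0$ is small in $H^s$; and by the standard $U^p$-$V^p$ duality the Duhamel term is controlled by pairing the nonlinearity with test spinors from the predual norm built on $V^2_\mp$. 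Moreover, a fixed point in a small ball of $X^s$ automatically produces scattering in the sense of \eqref{sense2}, since every $U^2_\pm$-function admits a strong $L^2$-limit of $e^{\pm it\brad}\psi_\pm(t)$ as $t\to\pm\infty$; dyadic summation with the $N^{2s}$-weights then delivers the asymptotic states $\vp_\pm \in H^s$.

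The heart of the matter is therefore a single trilinear estimate: for spinors $\psi^{(1)},\psi^{(2)},\psi^{(3)}$ one needs
\begin{equation*}
\left\|\int_0^t e^{\mp i(t-t')\brad}\Pi_\pm(D)\bigl[(V*\langle\psi^{(1)},\beta\psi^{(2)}\rangle)\beta\psi^{(3)}\bigr](t')\,dt'\right\|_{X^s} \les \prod_{j=1}^{3}\|\psi^{(j)}\|_{X^s}.
\end{equation*}
Expanding each $\psi^{(j)} = \psi^{(j)}_+ + \psi^{(j)}_-$, dyadically decomposing $P_{N_j}\psi^{(j)}$, and placing an additional Littlewood--Paley scale $P_L$ on the quadratic bracket $\langle\psi^{(1)},\beta\psi^{(2)}\rangle$, the potential acts as the Fourier multiplier $\widehat V(\xi) \sim |\xi|^{\gam-2}$, which on the $L$-piece gives the factor $L^{\gam-2}$. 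I would then invoke the 2d bilinear null-form estimates of Yang \cite{yang} and Tesfahun \cite{tes2d}: the symbol $\Pi_{\pm_1}(\xi)^*\beta\Pi_{\pm_2}(\eta)$ vanishes at (anti)parallel wave vectors (according to the signs $\pm_1,\pm_2$), so combined with the standard bilinear $L^2_{t,x}$-estimate for Klein--Gordon waves one obtains a bound of the form
\begin{equation*}
\|P_L\langle P_{N_1}\psi^{(1)}_{\pm_1},\beta P_{N_2}\psi^{(2)}_{\pm_2}\rangle\|_{L^2_{t,x}} \les L^{\al_1}(N_1,N_2\text{-factors})\|P_{N_1}\psi^{(1)}\|_{V^2_{\pm_1}}\|P_{N_2}\psi^{(2)}\|_{V^2_{\pm_2}},
\end{equation*}
with positive null-form gain $\al_1$ depending on $(\pm_1,\pm_2)$. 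The remaining factor $\psi^{(3)}$ is absorbed via a $U^2$-valued Strichartz estimate, and the output is resummed in $X^s$.

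The main obstacle, and the source of the threshold $s > \gam - 1$, is the low-frequency part $L \le 1$ of $V$, where $L^{\gam-2}$ blows up as $L\to 0$. One must balance this blow-up against the null-form gain $L^{\al_1}$ together with the $H^s$-weights on $\psi^{(1)},\psi^{(2)}$ transferred to the bracket; a careful accounting of the resulting dyadic sums over $L \le 1$ and over $N_1,N_2 \gtrsim L$ shows convergence precisely when $s > \gam-1$, with genuine logarithmic divergence as $s \downarrow \gam-1$. The high-frequency part $L \ge 1$ of $V$, where $|\widehat V| \les 1$, is subcritical and closes with room to spare. Once the trilinear estimate is in place, a standard Banach fixed point argument in a small ball of $X^s$ produces the unique global solution $\psi \in C(\mathbb R; H^s(\rt))$, and scattering follows from the $U^p$-$V^p$ observation of the first paragraph.
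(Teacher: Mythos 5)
Your skeleton is the same as the paper's: split $\psi=\psi_++\psi_-$, work in a dyadic $X^s$ norm built on $U^2_\pm$, use the $U^2$--$V^2$ duality to reduce everything to one trilinear (in effect quadrilinear) estimate, decompose the potential frequency into $\mu\le 1$ and $\mu\ge 1$, and exploit the null structure of $\langle\Pi_{\pm_1}\psi,\beta\Pi_{\pm_2}\phi\rangle$ together with bilinear estimates of Yang and Tesfahun; this is exactly the reduction to Lemma \ref{non-esti} in the paper. The scattering mechanism you describe (strong limits of $e^{\pm it\brad}\psi_\pm(t)$ for functions controlled in $U^2_\pm$, resummed with the $N^{2s}$ weights) is also the paper's.

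However, your quantitative accounting is reversed, and this is a genuine gap rather than a presentational difference. In the paper the low-frequency region $\mu\le1$ of $\widehat V$ is handled by the new 2d bilinear estimates (Propositions \ref{non-resonance} and \ref{resonance}, Corollary \ref{bilinear2}), which give a gain $\mu^{1/2}$ per bracket; since both inputs of each bracket then sit at comparable frequencies $\gtrsim1$, the low-frequency contribution is $\les\sum_{0<\mu\le1}\mu^{-(2-\gam)}\mu^{\frac12}\mu^{\frac12}=\sum_{0<\mu\le1}\mu^{\gam-1}$, a constant for $\gam>1$, with no $\lammed$ factor and no dependence on $s$ at all (cf.\ Remark \ref{assumption}: the low-frequency behavior of $V$ does not enter the regularity). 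Your claim that the sum over $L\le1$ converges ``precisely when $s>\gam-1$'' cannot be right, because the $H^s$ weights live on inputs of frequency $\ge1$ and cannot be traded against powers of $L\le1$; what the singular low-frequency part actually forces is $\gam>1$, i.e.\ mass-supercriticality. Conversely, the high-frequency region $\mu\ge1$, which you dismiss as closing ``with room to spare'' since $|\widehat V|\les1$ there, is precisely where the threshold $s>\gam-1$ arises: in resonant configurations (e.g.\ $\lam_1\sim\lam_2$, $\lam_3\sim\lam_4$, output $\mu$ as large as $\lammed$) the null-form bounds of Lemma \ref{null-esti} produce factors growing in $\mu$, and after multiplying by $\mu^{-(2-\gam)}$ the dyadic sum over $\mu\ge1$ only closes after absorbing a power of $\mu$ into $\lammed^{\de}$ with $\de>\gam-1$ (this is visible in the paper's estimates of $J_3^h$ and $J_4^h$). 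That region also needs the modulation decomposition and the $U^2$--$V^2$ logarithmic interpolation of Lemma \ref{log}, because duality only yields the fourth spinor in $V^2$; your sketch does not address this. So the contraction framework is fine, but the two genuinely hard ingredients are misattributed or missing: the 2d low-frequency bilinear $\mu^{1/2}$ estimates (which are not consequences of the high-frequency null-form estimates you invoke), and the sharp high-frequency trilinear bookkeeping that is the actual source of $s>\gam-1$.
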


For the proof of Theorem \ref{mainthm} we apply the frequency localization to potential $V$ which is two-fold. One is to handle the high-frequency part of $V$ by the null-form and modulation estimates of \cite{tes2d}. The other is to deal with the low-frequency part of $V$ via localized bilinear estimates arising from null-structure described in \cite{behe2}. Since the low-frequency part of Yukawa type potential is bounded, we do not need to handle it. However, the low-frequency part of Coulomb type potential contains significant singularity which is the main obstacle for GWP. To control it we establish the 2d localized bilinear estimates, which are adapted from 3d versions in \cite{heya, yang}. For details see Section \ref{low-esti} below. Our argument can also be applied to the 3d problem in the same way. We leave it to the readers.

In view of the scaling structure there seems to be still a room for GWP and scattering when $\frac{\gamma-1}2 \le s \le \gamma-1$. This issue would be worth pursuing as a future work.

%   Because, in this argument, one cannot obtain sufficient frequency which enables our summation.

\begin{rem}\label{assumption}
The potential $V$ can be generalized as follows. $\widehat V \in C^\infty(\mathbb R^2 \setminus \{0\})$  and there exist $\gamma_0, \gamma$ such that $1 < \gamma_0 \le 2$, $1\le \gamma < 2$, and
\newcommand{\pgam}{{\gam_0})}
\begin{align*}
\left||\xi|^{k} \partial^k\widehat{V}(\xi) \right| \les   |\xi|^{-(2-\pgam)}\chi_{\{|\xi| \le 1\}} + |\xi|^{-(2-\gam)}\chi_{\{|\xi| > 1 \}}\;(0 \le k \le 4).
\end{align*}
The prototype of $V$ with the property as above is $\mathcal F^{-1}(m_0^2+|\xi|^2)^{-\frac{\gam}2}$ $(m_0 \ge 0, 1 < \gam \le 2)$, which includes the Coulomb type potential $c|x|^{-\gam} \,(m_0 = 0, 1 < \gam < 2)$ and Yukawa type $\mathcal F^{-1}(m_0^2+|\xi|^2)\;(m_0 > 0)$. $V$ does not include two critical potentials: mass-critical case $c|x|^{-1}\;(m_0 = 0, \theta = 1)$, 2d Coulomb potential $-\frac1{2\pi}\ln |x| = \mathcal F^{-1}(|\xi|^{-2})\;(m_0 = 0)$. However, it allows a slight modification of low frequency part of the critical potentials $(\mathcal F^{-1}(|\xi|^{-(1-\varepsilon)}(1+|\xi|)^{-\theta}), \;0 < \varepsilon \ll 1, \theta = \varepsilon$ or $ 1+\varepsilon)$. For this type potential we can get exactly the same conclusion as Theorem \ref{mainthm}. At this point, the low-frequency index $\gamma_0$ is not involved in the regularity because the summation of low-frequency part is carried out on $|\xi| \le 1$ and results in a constant (see Section \ref{lowfrequency} below).
\end{rem}

%{\bf Describe the difficulty in proving theorem and difference from the previous work. Highlight the argument.}

%The massless case $m=0$ is not treated in this paper. To get the Theorem \ref{mainthm}  we have to use the Klein-Gordon Strichartz estimates introduced in Lemma \ref{KG-stri}. But the Lemma \ref{KG-stri} only works on the massive case $m>0$. Therefore massless case $m=0$ still open and will be solved close future.

The mass-supercritical potential can be referred to as a short-range one in the context of scattering theory. In the mass-subcritical (long-range) regime it is usually expected that a number of nontrivial smooth solutions do not behave like a linear solution in the sense of \eqref{sense1} or \eqref{sense2} as time goes on. This turns out to be true for our problem with $0 < \gamma \le 1$. To be more precise, let
$$
\mathcal V(\psi, \phi)(t) = \int  \Big (|\cdot|^{-\gamma}*\left<\psi, \phi\right>\Big)(t, x)  \left<\psi,  \phi\right> (t, x) \;dx.
$$
Then we have the following.

\begin{thm}\label{non-scat}
Let $0 < \gamma < 1$. Suppose that $\psi$ is a smooth global solution to \eqref{maineq} which scatters forward in $L_x^2$ to a smooth linear solution $\psii$.
Suppose that for some $0 < \theta < 1$ and $t_* > 0$ the linear solution $\psii$ satisfies that
	\begin{align}\label{assu-scat}
	 |\mathcal V(\psii, \beta\psii)(t)| \ge \theta \mathcal V(\psii, \psii)(t)
	\end{align}
for any $t > t_*$. Then $\psi, \psii =0$  in $L_x^2$.
Furthermore, if $\gamma = 1$ and $\psi$ is a smooth global solution to \eqref{maineq} which scatters forward in $H^{s, \delta}$ for some $0 < s, \delta \ll 1$ to a smooth linear solution $\psii$ satisfying \eqref{assu-scat}, then $\psi, \psii =0$  in $L_x^2$.
\end{thm}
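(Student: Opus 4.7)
The proof is by contradiction: assume $\psii\not\equiv 0$ and that $\psi$ scatters forward in $L_x^2$ to $\psii$. Write $\eta(t):=\psi(t)-\psii(t)$, so that $\|\eta(t)\|_{L_x^2}\to 0$; mass conservation for \eqref{maineq} together with the unitarity of $\epd$ gives $\|\psii(t)\|_{L_x^2}=\|\psi_0\|_{L_x^2}$ for every $t$. Starting from the two equations $\partial_t\psi=-i(\alpha\cdot D+\beta)\psi+i(V*\langle\psi,\beta\psi\rangle)\beta\psi$ and $\partial_t\psii=-i(\alpha\cdot D+\beta)\psii$, using self-adjointness of $\alpha\cdot D+\beta$ together with the pointwise reality of $V*\langle\psi,\beta\psi\rangle$, a direct computation produces the key identity
\begin{equation*}
\frac{d}{dt}\,\mathrm{Im}\,\langle\psi(t),\psii(t)\rangle_{L_x^2}
=-\mathrm{Re}\int_{\rt}(V*\langle\psi,\beta\psi\rangle)(t,x)\,\langle\beta\psi,\psii\rangle(t,x)\,dx.
\end{equation*}

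\medskip

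Decomposing $\psi=\psii+\eta$ on the right-hand side isolates the principal contribution $\mathrm{Re}\int (V*\langle\psii,\beta\psii\rangle)\langle\beta\psii,\psii\rangle\,dx = c\,\mathcal V(\psii,\beta\psii)(t)$, which is already real since $\langle\psii,\beta\psii\rangle$ is pointwise real. The remainder $R(t)$ is a finite sum of trilinear and quartic integrals in $\{\psii,\eta\}$ each carrying at least one $\eta$ factor; combining the 2d Hardy--Littlewood--Sobolev inequality for $|x|^{-\gam}$ with the dispersive decay $\|\psii(t)\|_{L_x^p}\les t^{-(1-2/p)}$ of the linear massive Dirac flow (Klein--Gordon stationary phase) yields the pointwise bound $|R(t)|\les t^{-\gam}\|\eta(t)\|_{L_x^2}+t^{-\gam}\|\eta(t)\|_{L_x^2}^2$.

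\medskip

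A matching stationary-phase analysis supplies the lower bound $\mathcal V(\psii,\psii)(t)\gtrsim\|\psi_0\|_{L_x^2}^4\,t^{-\gam}$ for $t\gg 1$, since $|\psii(t,x)|^2$ concentrates on $|x|\les t$ with density $\sim t^{-2}$. By \eqref{assu-scat} and continuity of $t\mapsto\mathcal V(\psii,\beta\psii)(t)$, this function has a definite sign on $(t_*,\infty)$ and $|\mathcal V(\psii,\beta\psii)(t)|\gtrsim t^{-\gam}$. Integrating the key identity from $t_*$ to $T$, the left-hand side stays uniformly bounded in $T$ by $2\|\psi_0\|_{L_x^2}^2$, while the absolute value of the main-term contribution is $\gtrsim\int_{t_*}^T s^{-\gam}\,ds\to\infty$ for $0<\gam<1$, and the remainder integral is of strictly lower order (split at $t_1$ chosen so that $\|\eta(t)\|_{L_x^2}<\ve$ for $t>t_1$, then send $\ve\to 0$). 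The resulting contradiction forces $\psii\equiv 0$, whence $\psi\equiv 0$ by scattering and mass conservation.

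\medskip

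\noindent\textbf{Main obstacle.} The delicate point is making the remainder $R(t)$ strictly dominated by the main term in the time-integrated sense. Since $L_x^2$-scattering supplies only the qualitative statement $\|\eta(t)\|_{L_x^2}\to 0$ with no explicit rate, the required extra decay must be extracted from the dispersive estimates on $\psii$; this closes comfortably for $0<\gam<1$. The case $\gam=1$ is genuinely critical: the main-term integral diverges only logarithmically, so a quantitative decay rate for $\|\eta(t)\|_{L_x^2}\to 0$ becomes indispensable to dominate the remainder on a logarithmic scale. This is precisely why the scattering hypothesis is upgraded to the weighted space $H^{s,\de}$, since the weighted regularity, through vector-field/Morawetz-type bounds for $\epd$, yields the quantitative temporal decay needed to close the argument.
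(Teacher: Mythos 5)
Your strategy for $0<\gamma<1$ is essentially the paper's: the same functional ${\rm Im}\left<\psi,\psii\right>_{L_x^2}$, the same derivative identity, the split into the main term $\mathcal V(\psii,\beta\psii)$ plus an error carrying at least one factor of $\psi-\psii$, the $t^{-1}$ dispersive decay of the free flow to show the error is $o(t^{-\gamma})$, and the concentration of the $L^2$ mass of $\psii$ in $\{|x|\lesssim t\}$ to get $|\mathcal V(\psii,\beta\psii)|\gtrsim \theta\,\mathcal V(\psii,\psii)\gtrsim t^{-\gamma}$, contradicting boundedness. Two caveats on that part: the mass-concentration lower bound is where the paper does real work (a commutator estimate to remove the $\pm$ cross term and a non-stationary-phase decay estimate outside $|x|\ge At$), which you only assert; and plain Hardy--Littlewood--Sobolev plus $L^p$ decay of $\psii$ does not handle the error terms whose convolution contains only $\eta=\psi-\psii$ factors (e.g.\ $\int (|x|^{-\gamma}*\left<\eta,\beta\eta\right>)\left<\beta\eta,\psii\right>dx$): there you must move the convolution by Fubini onto a product containing $\psii$ and use an $L^\infty$ bound of the potential of the type $\||x|^{-\gamma}*(u_1\overline{u_2})\|_{L^\infty}\lesssim\|u_1\|_{L^2}\|u_2\|_{L^2}^{1-\gamma}\|u_2\|_{L^\infty}^{\gamma}$ (the paper's Lemma \ref{infty-esti}); the paper sidesteps this by grouping the error as $Y_2$ (convolution on $\left<\beta\psii,\psii\right>$) and $Y_3$ (convolution on $\left<\beta(\psi-\psii),\psii\right>$). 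These are fixable, sketch-level gaps.

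The genuine gap is your account of $\gamma=1$. You claim that because the main term only diverges logarithmically, a \emph{quantitative} rate for $\|\psi(t)-\psii(t)\|\to0$ is indispensable, and that the $H^{s,\delta}$ hypothesis supplies such a rate via vector-field/Morawetz bounds. Neither point is correct, and no argument is actually given. First, no rate is needed: once the error is bounded \emph{pointwise in $t$} by $\epsilon(t)\,t^{-1}$ with $\epsilon(t)\to0$, the derivative of $H$ has the fixed sign of the main term and modulus $\gtrsim t^{-1}$ for large $t$, so its integral diverges (logarithmically is enough) against the uniform bound on $H$; this is exactly how the paper closes both cases, using only qualitative convergence. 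Second, the true role of $H^{s,\delta}$ is different: at $\gamma=1$ the kernel $|x|^{-1}$ is not locally square integrable in $2$d, so the $L^\infty$ potential bound with $u_1\in L^2$, $u_2\in L^\infty$ fails when $u_1\neq u_2$; the term with convolution on $\left<\beta(\psi-\psii),\psii\right>$ therefore needs a touch of Sobolev regularity of $\psi-\psii$ for the local singularity and the weight $(1+|x|)^{\delta}$ for the far region (the paper's Lemma \ref{infty-weight}), after which only $\|\psi-\psii\|_{H^{s,\delta}}\to0$ is used. As written, your $\gamma=1$ argument does not close: scattering in $H^{s,\delta}$ gives no temporal decay rate, and the endpoint convolution estimate your remainder bound relies on degenerates precisely at $\gamma=1$.
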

\noindent Here $H^{s, \delta}$ denotes the space $\{f \in L_x^2 : \|f\|_{H^{s, \delta}} := \|f\|_{H^s} + \|(1+|x|)^\delta f\|_{L_x^2}  < \infty \}$. As for the backward scattering, exactly the same statement holds with the only change $t < t_* < 0$. If $\psi$ scatters in $L_x^2$ or $H^{s, \delta}$ to $\psii$ satisfying \eqref{assu-scat}, then by Lemmas \ref{infty-esti} and \ref{infty-weight} below one can find $0 < \theta' < 1$ and $t_{**} > 0$ such that
\begin{align}\label{trans-psi}
\mathcal V(\psi, \beta \psi )(t) \ge \theta' \mathcal V(\psi, \psi)(t)
\end{align}
for any $t > t_{**}$.

A sufficient condition for \eqref{assu-scat} is that there exists $0 < \theta'' < 1$ such that
$$
\theta''|{\psii}_{, 1}(t, x)| \ge |{\psii}_{, 2}(t, x)|\;\;{\rm or}\;\; \theta''|{\psii}_{, 2}(t, x)| \ge |{\psii}_{, 1}(t, x)|
$$
for any $(t, x) \in \mathbb R^{1 + 2}$ with $t > t_*$, where $\psii = \left(
                                                                        \begin{array}{c}
                                                                          {\psii}_{, 1} \\
                                                                          {\psii}_{, 2} \\
                                                                        \end{array}
                                                                      \right)
$.

In Section \ref{sec-nonsca}, we prove Theorem \ref{non-scat}. Here we explain how the condition \eqref{assu-scat} and $H^{s, \delta}$ assumption come into play. In the proof it is essential to handle the lower bound of functional $H(t)  = {\rm sgn}(c) {\rm Im} \left< \psi(t), \psii(t) \right>_{L_x^2} $ by $\|\psii(0)\|_{L_x^2}$. We will show
$$\left|\frac{d}{dt}H(t)\right| \ge |c| |\mathcal V(\psii, \beta\psii)(t)| + o(t^{-\gamma}).$$ $H^{s, \delta}$ assumption is needed for the remainder term $o(t^{-\gamma})$ when $\gamma = 1$. Due to the matrix $\beta$ the value of $\mathcal V(\psii, \beta\psii)$ may vanish. To avoid this we assume \eqref{assu-scat} and hence obtain
$|\frac{d}{dt}H(t)| \gtrsim t^{-\gamma}\|\psii(0)\|_{L_x^2}^2 + o(t^{-\gamma})$ for sufficiently large $t$.
Therefore, if $0 < \gamma \le 1$ and $\|\psii(0)\|_{L_x^2} > 0$, then the lower bound eventually will lead us to contradiction to the uniform boundedness of $H(t)$.

In the papers \cite{chagla, ozya}, a class of initial data was considered for the nonlinear spinor interaction $\left< \psi, \beta\psi\right>$ to be destructive and hence for the nonlinear term to vanish. This argument can be applied to the equation \eqref{maineq}. In this case the solution to \eqref{maineq} is just linear and is referred to {\it scatter trivially}. As an example we can take the initial data $\psi_0$ satisfying that $A\psi_0 = \overline{\psi_0}$, where $A := \left(\begin{array}{ll} 0 & a_1 \\ a_2 & 0 \end{array} \right)$ for any $a_1,a_2 \in \mathbb{C}$ with $|a_1| = |a_2|=1$. This is the reason why some condition such as (1.8) (consequently \eqref{trans-psi}) is necessary for the nonexsitence of scattering.

In view of Theorem \ref{non-scat} the mass-critical index $\gamma = 1$ is actually scattering-critical. Hence as in many scattering-critical problems we expect a modified scattering theory.

%We think the $\gam=1$ will be borderline of the scattering and non-scattering.

%\noindent{\bf Notations.}
%\\
%\noindent $\bullet$ For any $\xi \in \mathbb R$ or $ \mathbb R^3$ we denote $\sqrt{m^2 + |\xi|^2}$ by $\langle \xi \rangle$.

\section{Notation and function space}

\subsection{Notations}\label{nota}
Throughout the paper we use the following notations.

\noindent $(1)$ $\|\cdot\|$ denotes $\|\cdot\|_{L_{t,x}^2}$.\\

\noindent$(2)$ (Mixed-normed spaces) For a Banach space $X$ and an interval $I$, $u \in L_I^q X$ iff $u(t) \in X$ for a.e.$t \in I$ and $\|u\|_{L_I^qX} := \|\|u(t)\|_X\|_{L_I^q} < \infty$. Especially, we denote  $L_I^qL_x^r = L_t^q(I; L_x^r(\rt))$, $L_{I, x}^q = L_I^qL_x^q$, $L_t^qL_x^r = L_{\mathbb R}^qL_x^r$.\\

\noindent$(3)$ (Littlewood-Paley operators) Let $\rho$ be a Littlewood-Paley function such that $\rho \in C^\infty_0(B(0, 2))$ with $\rho(\xi) = 1$  for $|\xi|\le 1$ and define $\rho_{\lam}(\xi):= \rho\left(\frac {\xi}{\lam}\right) - \rho\left(\frac{2\xi}{\lam}\right)$ for $\lam \in 2^\mathbb{Z}$. Then we define the frequency projection $P_\lam$ by $\mathcal{F}(P_\lam f)(\xi) = \rho_{\lam}(\xi)\widehat{f}(\xi)$, and also $P_{\le \mu} := I - \sum_{\lam > \mu}P_{\lam}$. In addition $P_{\mu \le \cdot \le \nu} := \sum_{\mu \le \lambda \le  \nu}P_\lam$ and $P_{\sim \nu} := \sum_{\lambda \sim \nu}P_{\lambda}$. For $ \lam \in 2^\mathbb{Z}$ we denote $\widetilde{\rho_\lam} = \rho_{\frac{\lam}{2}} + \rho_\lam + \rho_{2\lam}$. In particular, $\widetilde{P_\lam}P_\lam = P_\lam\widetilde{P_\lam} = P_\lam$ where $\widetilde{P_\lam} = \mathcal{F}^{-1}\widetilde{\rho_\lam}\mathcal{F}$. Next we define a Fourier localization operators $P_\lam^1$ as follow:
$$
P_\lam^1 f =  \left\{ \begin{array}{ll} 0  &\;\;\;\mbox{if}\;\; \lam < 1 ,\\ P_{\le 1}f &\;\;\;\mbox{if}\;\; \lam = 1 ,\\ P_\lam f & \;\;\;\mbox{if}\;\; \lam > 1.   \end{array} \right .
$$
Especially, we denote $P_\lam^1 f $ by $f_{\lam}$ for any measurable function $f$.\\

\noindent$(4)$ As usual different positive constants depending only on $\gamma_0, \gamma$ are denoted by the same letter $C$, if not specified. $A \lesssim B$ and $A \gtrsim B$ mean that $A \le CB$ and
$A \ge C^{-1}B$, respectively for some $C>0$. $A \sim B$ means that $A \lesssim B$ and $A \gtrsim B$.

\subsection{$U^p - V^p$ and adapted function space}
We explain concisely $U^p-V^p$ spaces. For more details we refer the readers to \cite{ haheko,haheko2, kota, kotavi}.
Let $1 \le p < \infty$ and $\mathcal{I}$ be the collection of finite partitions $\{ t_0 , \cdots, t_N\}$ satisfying $-\infty < t_0 < \cdots < t_N \le \infty$. If $t_N =\infty$, by convention, $u(t_N) := 0$ for any $u : \mathbb R \to L_x^2(\mathbb R^2)$.
Let us define a $U^p$-atom by a step function $a : \mathbb{R} \to L_x^2$ of the form
$$
a(t) = \sum_{k=1}^N \chi_{[t_{k-1},t_k)}\phi_k(t) \;\;\mbox{with}\;\; \sum_{k=1}^N \|\phi_k\|_{L_x^2}^p=1.
$$
Then the $U^p$ space is defined by
$$
U^p = \left\{ u = \sum_{j=1}^\infty \lam_j a_j : \mbox{$a_j$ are $U^p$-atoms and $\{\lam_j\} \in \ell^1$ },  \|u\|_{U^p}< \infty \right\},
$$
where the $U^p$-norm is defined by
$$
\|u\|_{U^p}:= \inf_{\mbox{representation of $u$}} \;\;\sum_{j=1}^\infty|\lam_j|.
$$
We next define $V^p$ as the space of all right-continuous functions $v : \mathbb{R} \to L_x^2$ satisfying that $\underset{t \to -\infty}{\lim}v(t) =0$ and the norm
$$
\|v\|_{V^p} := \sup_{\{t_k\} \in \mathcal{I}} \left( \sum_{k=1}^N \|v(t_k)  - v(t_{k-1})\|_{L_x^2}^p \right)^{\frac1p}
$$
is finite.

We introduce some properties of $U^p$ and $V^p$.
\begin{lem}[\cite{haheko}]\label{embedd} Let $1 \le p < q <\infty$. Then the following holds.
	\item[(i)] $U^p$ and $V^p$ are Banach spaces.
	\item[(ii)] The embeddings $U^p \hookrightarrow V^p \hookrightarrow U^q \hookrightarrow L^{\infty}(\mathbb{R};L_x^2) $ are continuous.
\end{lem}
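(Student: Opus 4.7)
The plan is to verify the Banach space structure in (i) by a direct axiom check, and then dispatch the chain of embeddings in (ii) in the order $U^q\hookrightarrow L^\infty_tL_x^2$, $U^p\hookrightarrow V^p$, and $V^p\hookrightarrow U^q$, in increasing difficulty. For (i), the $U^p$-norm is a seminorm from its infimum definition, and non-degeneracy follows because $\|u\|_{U^p}=0$ forces atomic representations with arbitrarily small $\ell^1$-coefficients, whence $u(t)=0$ in $L_x^2$ pointwise. Completeness is a telescoping argument: given a Cauchy sequence in $U^p$, extract a subsequence $\{u_{n_k}\}$ with $\|u_{n_k}-u_{n_{k-1}}\|_{U^p}\le 2^{-k}$, pick atomic representations of each difference whose $\ell^1$-weights are bounded by $2^{1-k}$, and concatenate to obtain an atomic representation of the limit. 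The $V^p$-norm is manifestly a seminorm; non-degeneracy uses right-continuity together with the vanishing at $-\infty$, and completeness follows by taking pointwise limits along a Cauchy sequence, noting that such a sequence is automatically bounded in $L^\infty_tL_x^2$ through the embedding below.

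The easy embeddings are as follows. For $U^q\hookrightarrow L^\infty_tL_x^2$, every $U^q$-atom satisfies $\sup_t\|a(t)\|_{L_x^2}\le 1$ directly from the normalization $\sum_k\|\phi_k\|_{L_x^2}^q=1$, and $\ell^1$-summation over atoms extends this bound to all of $U^q$. For $U^p\hookrightarrow V^p$, I would test on an atom $a=\sum_{k=1}^N\chi_{[t_{k-1},t_k)}\phi_k$: for any partition $\{s_j\}$ the only nontrivial jumps $a(s_j)-a(s_{j-1})$ come from those $j$ that straddle some $t_k$, so each such jump is of the form $\phi_{k}-\phi_{k-1}$; using the triangle inequality in $\ell^p$ together with $(\sum_k\|\phi_k\|_{L_x^2}^p)^{1/p}=1$ gives $\|a\|_{V^p}\le 2$ uniformly in the atom, and the embedding extends linearly.

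The main obstacle is $V^p\hookrightarrow U^q$ for $p<q$, for which I would employ a greedy dyadic atomic decomposition. Setting $\lambda=\|v\|_{V^p}$, at each dyadic level $n\ge 0$ I build stopping times $t^n_0<t^n_1<\cdots$ starting from $t^n_0=-\infty$ (with $v(-\infty):=0$), letting $t^n_k$ be the first time $t>t^n_{k-1}$ with $\|v(t)-v(t^n_{k-1})\|_{L_x^2}>\lambda\,2^{-n}$; the definition of $\|v\|_{V^p}$ caps the number of stopping times at $O(2^{np})$. Let $v_n$ freeze $v$ on each interval $[t^n_{k-1},t^n_k)$; then $v_n\to v$ in $L^\infty_tL_x^2$ and each telescoping difference $v_n-v_{n-1}$ is, after normalization, a $U^q$-atom scaled by a factor of order $\lambda\,2^{-n}(2^{np})^{1/q}=\lambda\,2^{n(p/q-1)}$. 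Summing $\ell^1$-coefficients yields $\|v\|_{U^q}\les\lambda\sum_n 2^{n(p/q-1)}<\infty$ precisely because $p<q$. The decisive quantitative balance between the number of stopping times ($\sim 2^{np}$) and the $q$-th power in the atom normalization is where I expect the main technical work to lie; verifying convergence of the telescoping series in $U^q$ and identifying its pointwise limit with $v$ (using right-continuity) is the step requiring the most care.
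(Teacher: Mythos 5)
Your argument is correct, and the paper itself offers no proof of this lemma --- it is quoted directly from Hadac--Herr--Koch \cite{haheko}, whose proof your outline essentially reproduces: the atom-by-atom verification of $U^q\hookrightarrow L^\infty_tL_x^2$ and $U^p\hookrightarrow V^p$, and the greedy dyadic stopping-time decomposition with the $2^{np}$-versus-$q$-th-root balance for $V^p\hookrightarrow U^q$ (their Proposition 2.5). So this is the same approach as the cited source, carried out correctly.
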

These spaces have the useful duality property.
\begin{lem}[Corollary of \cite{kotavi}]\label{duality} Let $u \in U^p$ be absolutely continuous with $1 < p < \infty$. Then
	$$
	\|u\|_{U^p} = \sup \left\{ \left| \int \left<u', v \right>_{L_x^2} dt \right| : v \in C_0^{\infty}, \;\; \|v\|_{V^{p'}}=1 \right\}.
	$$
\end{lem}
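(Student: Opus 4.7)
The plan is to establish the identity by proving both inequalities for the bilinear pairing $B(u,v) := \int \left<u'(t), v(t)\right>_{L_x^2}\,dt$. This pairing is well-defined for absolutely continuous $u$ (so that $u' \in L^1_{\rm loc}(\mathbb{R}; L_x^2)$) and $v \in C_0^\infty$, and by integration by parts it equals $-\int \left<u(t), v'(t)\right>_{L_x^2}\,dt$ (the boundary terms vanish thanks to compact support of $v$ and the convention $u(-\infty) = 0$ on $U^p$).

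For the direction $\sup\{|B(u,v)| : \|v\|_{V^{p'}}=1\} \le \|u\|_{U^p}$, I would first verify the bound on a single $U^p$-atom $a(t) = \sum_{k=1}^N \chi_{[t_{k-1}, t_k)}\phi_k$ normalized by $\sum_k \|\phi_k\|_{L_x^2}^p = 1$. Since atoms are not absolutely continuous, I use the rewriting $-\int \left<a(t), v'(t)\right>_{L_x^2}\,dt$ as the definition of $B(a,v)$, which evaluates explicitly:
$$
-\int \left<a(t), v'(t)\right>_{L_x^2}\,dt = \sum_{k=1}^N \left<\phi_k,\, v(t_{k-1}) - v(t_k)\right>_{L_x^2}.
$$
Hölder in the index $k$ with exponents $p, p'$, followed by the variation-seminorm definition of $V^{p'}$, yields $|B(a,v)| \le \|v\|_{V^{p'}}$. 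For a generic representation $u = \sum_j \lam_j a_j$ with $\sum_j |\lam_j| \le \|u\|_{U^p} + \ve$, linearity then gives $|B(u,v)| \le (\|u\|_{U^p} + \ve)\|v\|_{V^{p'}}$; absolute continuity of $u$ ensures the integration-by-parts identity $B(u,v) = -\int \left<u, v'\right>_{L_x^2}\,dt$ holds on the absolutely continuous realization as well, so the atomic bound applies consistently. Sending $\ve \to 0$ closes this direction.

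For the reverse inequality, I would invoke the full $U^p$-$V^{p'}$ duality theorem of \cite{kotavi}: the map $\Phi : V^{p'} \to (U^p)^*$ defined by $\Phi(v)(u) = B(u,v)$ is an isometric isomorphism, so $\|u\|_{U^p} = \sup\{|B(u,v)| : v \in V^{p'},\, \|v\|_{V^{p'}} = 1\}$. To reduce the supremum from $V^{p'}$ to $C_0^\infty$, I would approximate $v \in V^{p'}$ by smooth compactly supported $v_n$ via mollification and truncation, chosen so that $\|v_n\|_{V^{p'}} \to \|v\|_{V^{p'}}$ and $B(u, v_n) \to B(u, v)$, using $u' \in L^1_{\rm loc}$ for dominated convergence. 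The main obstacle is the duality theorem itself, which requires a Hahn-Banach extension to show surjectivity of $\Phi$ onto $(U^p)^*$, together with the lower isometric bound $\|v\|_{V^{p'}} \le \|\Phi(v)\|_{(U^p)^*}$; the latter is obtained by testing $\Phi(v)$ against $U^p$-atoms whose supports align with a partition nearly attaining the $V^{p'}$-variation of $v$, and whose amplitudes $\phi_k$ are chosen proportional to $v(t_{k-1}) - v(t_k)$ normalized to be a valid atom.
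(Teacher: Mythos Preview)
The paper does not give a proof of this lemma; it is simply quoted as a corollary of \cite{kotavi}. Your sketch is the standard route and the first inequality (testing an atom against $v$, then passing to $\sum_j\lambda_ja_j$) is correct as written.

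For the reverse inequality there is a genuine soft spot in the density step. Mollification indeed does not increase the $V^{p'}$ norm (your Minkowski argument is fine), but truncation in time is not innocuous: functions in $V^{p'}$ are only required to vanish at $-\infty$, not at $+\infty$, so multiplying by a smooth cutoff $\chi_R$ can \emph{increase} $\|v\|_{V^{p'}}$ by a fixed factor. For instance $v=\chi_{[0,\infty)}\phi$ has $\|v\|_{V^{p'}}=\|\phi\|_{L^2_x}$, while any compactly supported modification that equals $v$ on $[0,R]$ and vanishes beyond $2R$ picks up an extra drop of size $\|\phi\|_{L^2_x}$, giving norm $2^{1/p'}\|\phi\|_{L^2_x}$. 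Thus ``choose $v_n$ so that $\|v_n\|_{V^{p'}}\to\|v\|_{V^{p'}}$'' is not achievable by mollify-and-truncate alone, and without this you would only recover the equality up to a constant. A second, related issue is that for generic $v\in V^{p'}$ the expression $\int\langle u',v\rangle\,dt$ is not a priori an absolutely convergent integral (you only have $u'\in L^1_{\rm loc}$), so the pairing you are trying to approximate must be interpreted as the abstract $U^p$--$V^{p'}$ pairing from \cite{kotavi}, not as this integral; the two agree for $v\in C_0^\infty$, which is exactly the point, but then the convergence $B(u,v_n)\to B(u,v)$ has to be argued at the level of the abstract pairing.

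Both issues are handled in \cite{kotavi,haheko}: one route is to observe that the functional $u\mapsto\|u\|_{U^p}$ is already realised, via Hahn--Banach and the identification $(U^p)^*\cong V^{p'}$, by testing against step functions $v$ with finitely many jumps, and for such $v$ the pairing $\sum_k\langle\phi_k,v(t_{k-1})-v(t_k)\rangle$ is a finite sum that is stable under replacing $v$ by a smooth compactly supported approximant sharing the same values at the finitely many partition points. This bypasses the truncation problem entirely. Your overall plan is right; it is this replacement of ``mollify and truncate an arbitrary $v$'' by ``approximate a near-extremal \emph{step} $v$'' that closes the argument.
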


Now let us define the adapted function spaces $U_{\pm}^p,\; V_{\pm}^p$ as follows:
$$
\|u\|_{U_{\pm}^p} : =\|\epm u\|_{U^p} \;\;\mbox{and}\;\;  \|u\|_{V_{\pm}^p} : =\|\epm u\|_{V^p}.
$$

\begin{lem}[Logarithm interpolation, Lemma 4.12 of \cite{kotavi}]\label{log} Let $p > 2$ and $v \in V_{\pm}^2$. Then there exist a constant $\kappa(p) > 0$ and functions $u \in U_\pm^2$ and $w \in U_{\pm}^p$ such that
	$$
	v = u +w
	$$
	and for all $M\ge1$
	$$
	\frac{\kappa}{M} \|u\|_{U_\pm^2} + e^M\|w\|_{U_{\pm}^p} \les \|v\|_{V_{\pm}^2}.
	$$
\end{lem}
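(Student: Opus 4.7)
The plan is to exploit the fact that the map $u\mapsto e^{\pm it\langle D\rangle}u$ is an isometry from $V_{\pm}^p$ onto $V^p$ (and from $U_{\pm}^p$ onto $U^p$), thereby reducing the statement to the following free-flow version: given $\tilde v\in V^2$ with $\|\tilde v\|_{V^2}=1$ and $M\ge 1$, one exhibits $\tilde v=\tilde u+\tilde w$ with $\|\tilde u\|_{U^2}\lesssim M$ and $\|\tilde w\|_{U^p}\lesssim e^{-M}$.

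The construction is a stopping-time argument. Fix a threshold $\epsilon\in(0,1)$, set $\tau_0=-\infty$, and inductively
\[
\tau_{n+1}=\inf\bigl\{t>\tau_n:\|\tilde v(t)-\tilde v(\tau_n)\|_{L_x^2}\ge\epsilon\bigr\}.
\]
The $V^2$ normalization forces the number of stopping times to satisfy $N\le\epsilon^{-2}$. Put
\[
\tilde u(t)=\sum_n\chi_{[\tau_n,\tau_{n+1})}(t)\,\tilde v(\tau_n),\qquad \tilde w=\tilde v-\tilde u,
\]
so that $\|\tilde w(t)\|_{L_x^2}\le\epsilon$ pointwise in $t$ by construction. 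I would estimate $\|\tilde u\|_{U^2}$ via the telescoping $\tilde u=\sum_n\chi_{[\tau_n,\infty)}(\tilde v(\tau_n)-\tilde v(\tau_{n-1}))$, viewing each summand as a (scaled) one-step $U^2$-atom of norm $c_n:=\|\tilde v(\tau_n)-\tilde v(\tau_{n-1})\|_{L_x^2}$; Cauchy--Schwarz and $\sum c_n^2\le 1$ then give $\|\tilde u\|_{U^2}\lesssim N^{1/2}\le\epsilon^{-1}$. Dually, the residual $\tilde w$ is a step function on at most $N+1$ intervals of $L_x^2$-height $\le\epsilon$, so a direct $U^p$-atomic decomposition produces $\|\tilde w\|_{U^p}\lesssim N^{1/p}\epsilon\le\epsilon^{1-2/p}$.

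A single choice of $\epsilon$ only gives a polynomial trade-off (e.g.\ $\epsilon\sim 1/M$ yields $\|\tilde u\|_{U^2}\lesssim M$ and $\|\tilde w\|_{U^p}\lesssim M^{-(1-2/p)}$). To amplify this to the exponential bound $e^{-M}$, I would iterate the splitting $K\sim M$ times: at each step $k=1,\dots,K$ apply the above construction to the previous residual at a fixed geometric threshold. Combining the embedding $V^2\hookrightarrow U^p$ from Lemma \ref{embedd} (valid precisely because $p>2$) with the pointwise $L_x^2$-smallness of each residual yields a strict contraction $\|\tilde w_k\|_{V^2}\le\theta\|\tilde w_{k-1}\|_{V^2}$ for some $\theta=\theta(p)<1$. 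Summing the $K$ step-function contributions gives $\|\tilde u\|_{U^2}\lesssim K\sim M$, while the final residual has $U^p$-norm $\lesssim\theta^K\lesssim e^{-M}$ upon choosing $K\sim M/|\log\theta|$, with $\kappa:=|\log\theta|$ playing the role of the constant in the statement.

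The main obstacle is quantifying this one-step contraction: verifying that each splitting strictly reduces the $V^2$-norm of the residual by a factor depending only on $p$. This is exactly where the hypothesis $p>2$ enters, via the embedding $V^2\hookrightarrow U^p$, which is what converts pointwise $L^\infty L_x^2$-smallness of the residual into the definite multiplicative gain needed to iterate. Everything else---the telescoping into $U^2$-atoms and the $U^p$-atomic bookkeeping for the step functions---is routine given this contraction.
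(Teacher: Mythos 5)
You should first note that the paper does not prove this lemma at all: it is quoted verbatim from Koch--Tataru--Visan (Lemma 4.12; essentially Proposition 2.20 of Hadac--Herr--Koch), so the comparison is with that standard proof. Your reduction by the isometry $e^{\pm it\langle D\rangle}$ and your single-scale stopping-time step are in the right spirit (the bound $\|\tilde u\|_{U^2}\lesssim N^{1/2}$ via telescoping and Cauchy--Schwarz is correct), but the argument has a genuine gap at exactly the point you flag as "the main obstacle": the claimed one-step contraction $\|\tilde w_k\|_{V^2}\le\theta\|\tilde w_{k-1}\|_{V^2}$ is false in general, and neither the embedding $V^2\hookrightarrow U^p$ nor the pointwise bound $\|\tilde w\|_{L^\infty_tL^2_x}\le\epsilon$ yields any decay of the $V^2$-norm. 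Concretely, take $v$ to be a staircase with $N\sim\epsilon^{-2}$ mutually orthogonal increments of size $\sim\epsilon$: then $\|v\|_{V^2}\sim1$, while the residual after the stopping-time splitting at threshold $\epsilon$ still has $V^2$-norm comparable to $1$ (it retains $\sim N$ jumps of size $\sim\epsilon$). Without the contraction the iteration cannot close: at a fixed threshold nothing improves from step to step, and if instead you shrink the threshold geometrically when processing the residuals, the step count at threshold $\epsilon_k$ is only bounded by $\epsilon_k^{-2}\|\tilde w_{k-1}\|_{V^2}^2$, so the accumulated $U^2$-cost grows like $\epsilon_K^{-1}\sim e^{M}$ rather than the required $M$. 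A secondary (fixable) slip: $\tilde w=\tilde v-\tilde u$ is not a step function, so the asserted bound $\|\tilde w\|_{U^p}\lesssim N^{1/p}\epsilon$ is unjustified as written; one can instead use $\|\tilde w\|_{U^p}\lesssim\|\tilde w\|_{V^{p_1}}\lesssim\|\tilde w\|_{L^\infty_tL^2_x}^{1-2/p_1}\|\tilde w\|_{V^2}^{2/p_1}$ for some $2<p_1<p$.

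The cited proof avoids any contraction by working multiscale on $v$ itself rather than on residuals. For each $n$ one runs the stopping-time construction at threshold $2^{-n}$, obtaining step approximations $v_n$ with $\|v-v_n\|_{L^\infty_tL^2_x}\le 2^{-n}$ and at most $\sim 2^{2n}$ steps, the count being controlled by $\|v\|_{V^2}\le1$ for every $n$ (this is what replaces your contraction). Then $v_n-v_{n-1}$ \emph{is} a step function with $\lesssim 2^{2n}$ steps of height $\lesssim 2^{-n}$, whence $\|v_n-v_{n-1}\|_{U^2}\lesssim1$ and $\|v_n-v_{n-1}\|_{U^p}\lesssim 2^{-n(1-2/p)}$; here is where $p>2$ enters, not through $V^2\hookrightarrow U^p$. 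Truncating the telescoping series $v=v_{n_0}+\sum_{n>n_0}(v_n-v_{n-1})$ at $n_0\sim M$ gives $u:=v_{n_0}$ with $\|u\|_{U^2}\lesssim n_0\sim M$ (cost $O(1)$ per scale, which is precisely what your single-scale splitting, costing $\epsilon^{-1}$, cannot deliver) and $w$ equal to the tail with $\|w\|_{U^p}\lesssim 2^{-n_0(1-2/p)}\lesssim e^{-M}$ after fixing $\kappa=\kappa(p)$. If you replace your iteration-on-residuals by this dyadic telescoping on $v$, the rest of your write-up goes through.
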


\begin{prop}[Transfer principle, Proposition 2.19 of \cite{haheko}]\label{transfer} Let
	$$
	T : L_x^2 \times L_x^2 \times \cdots \times L_x^2 \to L_{loc}^1
	$$
	be a multilinear operator. If
	$$
	\left\|T\left( e^{\pm_1 it\brad}f_1 , e^{\pm_2 it\brad}f_2 , \cdots ,e^{\pm_k it\brad}f_k \right)\right\|_{L_t^qL_x^r} \les \prod_{j=1}^{k}\|f_j\|_{L_x^2}
	$$
	for some $1 \le q,r \le \infty$, then we have
	$$
	\|T(u_1, u_2,\cdots, u_k)\|_{L_t^q L_x^r} \les \prod_{j=1}^k \|u_j\|_{U_{\pm_j}^q},
	$$
	where $u_j$ are arbitrary functions in $U_{\pm_j}^q$.
\end{prop}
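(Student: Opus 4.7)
The plan is a textbook atomic-decomposition argument. By multilinearity of $T$ and the triangle inequality in $L^q_t L^r_x$, together with the definition of the $U^q_{\pm_j}$-norm as the infimum $\sum_n |\lam_{j, n}|$ over representations $u_j = \sum_n \lam_{j, n} a_{j, n}$, it suffices to establish $\|T(a_1, \ldots, a_k)\|_{L^q_t L^r_x} \les 1$ whenever each $a_j$ is a $U^q_{\pm_j}$-atom. Unwrapping the definition of $U^q_{\pm_j}$, such an atom has the form $a_j(t) = \sum_{k_j} \chi_{I_{j, k_j}}(t)\, e^{\mp_j it\brad}\phi_{j, k_j}$ with $\sum_{k_j} \|\phi_{j, k_j}\|_{L^2_x}^q = 1$, where $\{I_{j, k_j}\}_{k_j}$ is a partition of $\mathbb R$.

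Next I would take the common refinement of the $k$ partitions, producing a countable family of pairwise disjoint intervals $J_{k_1, \ldots, k_k} = I_{1, k_1} \cap \cdots \cap I_{k, k_k}$ whose union is $\mathbb R$. Pulling the time-dependent scalar cutoffs through $T$ by multilinearity yields
\[
T(a_1, \ldots, a_k)(t) = \sum_{k_1, \ldots, k_k} \chi_{J_{k_1, \ldots, k_k}}(t)\, T\bigl(e^{\mp_1 it\brad}\phi_{1, k_1}, \ldots, e^{\mp_k it\brad}\phi_{k, k_k}\bigr).
\]
Disjointness of the $J_{k_1, \ldots, k_k}$ lets me write $\|T(a_1, \ldots, a_k)\|_{L^q_t L^r_x}^q$ as the corresponding sum of contributions on each cell, and each cell contribution is dominated by the full-line $L^q_t L^r_x$-norm of the associated free-wave multilinear expression, to which the standing hypothesis applies and produces $\prod_j \|\phi_{j, k_j}\|_{L^2_x}^q$.

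Finally, because the index tuple $(k_1, \ldots, k_k)$ ranges over all nonempty product cells independently in each coordinate, the resulting $k$-fold sum factorizes as $\prod_j \sum_{k_j} \|\phi_{j, k_j}\|_{L^2_x}^q = 1$ by the atom normalization, closing the atom-level bound. The main thing to watch is the bookkeeping for the common refinement and the factorization of the product sum; the edge case $q = \infty$ is handled by replacing the $\ell^q$-sum by an essential supremum and the normalization condition by $\sup_{k_j} \|\phi_{j, k_j}\|_{L^2_x} = 1$, after which the same chain of inequalities applies verbatim. I do not anticipate a genuine analytic obstacle here, since the dispersive content is entirely absorbed into the free-wave hypothesis and what remains is purely combinatorial.
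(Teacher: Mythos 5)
Your argument is correct and is essentially the canonical one: the paper does not prove this proposition but quotes it from Hadac--Herr--Koch (Proposition 2.19), whose proof is exactly your reduction to atoms, common refinement of the finitely many partitions, application of the free-wave hypothesis on each cell, and factorization of the resulting multi-indexed sum (an upper bound, since only nonempty cells occur) via the atom normalization. The only cosmetic point is the sign bookkeeping: with $\|u\|_{U^q_{\pm}}=\|e^{\pm it\brad}u\|_{U^q}$ the atoms are step functions built from free waves $e^{\mp it\brad}\phi_k$, so the free-wave hypothesis is invoked with that sign, as you in fact wrote.
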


%%%%%%%%%%%%%%%%%%%%%%%%%%%%%%%%%%%%%%%%%%%%%%%%%%%%%%%%%%%%%%%%%%%%%%%%%%%%%%%%%%%%%%%%%%%%%%%%%%%%%%%%%%%%%%%%%%%%%%%%%%%%%%%%%%%%%%%%%%%%%%%%%%%%%%%%%%%%%%%%%%%%%%%%%%%%%%
\newcommand{\etahat}{\widehat{\eta}}
\newcommand{\sighat}{\widehat{\sigma}}
\newcommand{\brae}{\left<\eta \right>}
\newcommand{\bras}{\left<\sigma \right>}

\section{Null-form and bilinear estimates}

 The null structure of \eqref{maineq} was revealed concretely in \cite{tes2d} and can be described as follows: 
\begin{align*}
\left<  \Pi_{+}(D) \psi_1 , \beta \Pi_{\pm}(D)\psi_2 \right>
= \sum_{j=1}^3 Q_j^\pm(\psi_1,\psi_2)    + \sum_{j=1}^4 B_j^\pm (\psi_1, \psi_2),
\end{align*}
where the null forms $Q_j^\pm$ and bilinear forms $B_j^\pm$ are given by
\begin{align*}
 Q_j^\pm(\psi_1,\psi_2)&=  \mathcal{F}^{-1} \iint_{\eta-\sigma = \xi} \left< q_j^\pm (\eta,\sigma)   \widehat{\psi_1}(\eta) ,  \beta \widehat{\psi_2}(\sigma)  \right> \,d\eta d\sigma    ,\\
 B_j^\pm (\psi_1, \psi_2)&= \mathcal{F}^{-1} \iint_{\eta-\sigma = \xi} \left<  b_j^\pm (\eta,\sigma)  \widehat{\psi_1}(\eta) ,  \beta \widehat{\psi_2}(\sigma)  \right> \,d\eta d\sigma
\end{align*}
 for $j= 1,2,3$, and
$$
 B_4^\pm (\psi_1, \psi_2)= \pm  \iint_{\eta-\sigma = \xi}  \left<\eta\right>^{-1} \left<  \widehat{\psi_1^+}(\eta) ,  \widehat{\psi_2^\pm}(\sigma) \right>\,d\eta d\sigma,
$$
where
\begin{align*}
q_1^{\pm}(\eta, \sigma) &:=  (|\etahat||\sighat| \pm \etahat\cdot \sighat)I,   \qquad\qquad   b_1^{\pm}(\eta, \sigma) :=  (1- |\etahat||\sighat|)I,\\
q_2^{\pm}(\eta, \sigma) &:=  -(\etahat|\sighat| \pm \sighat|\etahat|)\cdot\al, \qquad\quad   b_2^{\pm}(\eta, \sigma) :=  - \frac{[\eta(\bras - |\sigma|) \pm \sigma(\brae - |\eta|)]\cdot\al}{\brae\bras},\\
q_3^{\pm}(\eta, \sigma) &:=  \pm(\etahat_1\sighat_2 - \etahat_2 \sighat_1)\al^1\al^2,   \;\;\;\;\;\;   b_3^{\pm}(\eta, \sigma) :=  \pm \frac{[(\eta-\sigma)\cdot\al]\beta - (\brae \pm \bras)\beta + I}{\brae \bras}.\\
\end{align*}
Here we used the normalization $\widehat{\xi} = \frac{\xi}{\left<\xi \right>}, \widehat{\xi}_j = \frac{\xi_j}{\left<\xi \right>}\; (j =  1, 2 )$.

\medskip

We now provide localized null-form estimates and bilinear estimates.

% We first introduce Klein-Gordon Strichartz estimates. %{\bf Why do we need this lemma ? This is trivial. }

%\begin{lem}[Klein-Gordon Strichartz estimates. \cite{chozxi}]\label{KG-stri} Let $m>0$. Then
%\begin{align}\label{KG-stri}
%\|\epm f_\lam\|_{L_t^\infty L_x^2} \les \|f_\lam\|_{L_x^2}
%\end{align}
%for all $\lam \in 2^\mathbb{Z}$.		
%\end{lem}
\subsection{High-frequency estimates}

\begin{lem}[Null-form estimates, Corollary 1 of \cite{tes2d}]\label{null-esti}
Let $\mu,\lam_1,\lam_2 \in 2^{\mathbb{Z}}$ satisfy $\mu, \lam_1, \lam_2 \ge 1$ and $\lammin$ be minimum value of $\lam_1$ and $\lam_2$.  Then the following holds:
\item[(i)] 
\begin{align*}
\left\|P_\mu Q_j^+ (\psi_{\lam_1} , \phi_{\lam_2})\right\| \les \left\{\begin{array}{ll}
\lammin^{\frac12} \|\psi_{\lam_1}\|_{U_+^2}\|\phi_{\lam_2}\|_{U_+^2} & \mbox{for}\;\; \lam_1 \nsim \lam_2, \\ \mu\lam_{1}^{-\frac12} \|\psi_{\lam_1}\|_{U_+^2}\|\phi_{\lam_2}\|_{U_+^2} & \mbox{for}\;\; \lam_1 \sim \lam_2
\end{array}\right.
\end{align*}
for any $\psi_{\lam_1}, \phi_{\lam_2} \in U_+^2$.
\item[(ii)]  \begin{align*}
\left\|P_\mu Q_j^- (\psi_{\lam_1} , \phi_{\lam_2})\right\| \les \left\{\begin{array}{ll}
\lammin^{\frac12} \|\psi_{\lam_1}\|_{U_+^2}\|\phi_{\lam_2}\|_{U_-^2} & \mbox{for}\;\; \lam_1 \nsim \lam_2, \\ \mu^{\frac12} \|\psi_{\lam_1}\|_{U_+^2}\|\phi_{\lam_2}\|_{U_-^2} & \mbox{for}\;\; \lam_1 \sim \lam_2
\end{array}\right.
\end{align*}
for any $\psi_{\lam_1} \in U_+^2, \phi_{\lam_2} \in U_-^2$.
\end{lem}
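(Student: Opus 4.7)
Since this lemma is the content of Corollary 1 of \cite{tes2d}, my plan is to follow that argument. First, the transfer principle (Proposition \ref{transfer}) reduces each estimate to the corresponding $L^2_{t,x}$ bound in which the $U^2_\pm$ factors are replaced by the free Klein--Gordon waves $\epd f_1, \epd f_2$ in case (i), respectively $\epd f_1, \emd f_2$ in case (ii), with $f_j \in L^2_x$ sharply localized at frequency $\lam_j$. After this reduction, Plancherel in spacetime expresses the squared left-hand side as a weighted integral over the resonance set
\begin{equation*}
\bigl\{\,(\tau, \xi, \eta, \sigma):\ \tau = \varepsilon_1 \langle\eta\rangle + \varepsilon_2\langle\sigma\rangle,\ \xi = \eta - \sigma,\ |\eta|\sim\lam_1,\ |\sigma|\sim\lam_2,\ |\xi|\sim\mu\,\bigr\},
\end{equation*}
where $(\varepsilon_1, \varepsilon_2) \in \{\pm 1\}^2$ is determined by the case and the weight is the scalar symbol $q_j^\pm(\eta, \sigma)$.

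The crucial input from the null structure is the symbol bound $|q_j^\pm(\eta, \sigma)| \les \angle(\eta, \mp\sigma)^2$, together with $\langle\eta\rangle^{-1}\langle\sigma\rangle^{-1}$ corrections in the $q_2^\pm, q_3^\pm$ matrix components; on the resonance set this angular cancellation translates, via an elementary resonance identity of the form
\begin{equation*}
\bigl|\,\varepsilon_1\langle\eta\rangle + \varepsilon_2\langle\sigma\rangle \mp \langle\eta - \sigma\rangle\,\bigr| \gtrsim \lammin\,\angle(\eta, \mp\sigma)^2,
\end{equation*}
into a quantitative modulation gain, which is the mechanism driving the whole argument.

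The proof then splits into two regimes. In the transverse case $\lam_1 \nsim \lam_2$, the two characteristic hyperboloids intersect in a manifold of $O(\lammin)$ thickness after angular summation, and Cauchy--Schwarz following an angular Whitney decomposition yields the $\lammin^{1/2}$ factor uniformly in $\mu$. In the resonant case $\lam_1 \sim \lam_2$, the relevant angle is forced to be $O(\mu/\lam_1)$; combining the null gain with the standard bilinear $L^2_{t,x}$ estimate for Klein--Gordon waves (of Klainerman--Machedon/Selberg type) produces the $\mu\lam_1^{-1/2}$ factor for $Q^+$ and the $\mu^{1/2}$ factor for $Q^-$, the asymmetry reflecting the different null-geometry of $++$ versus $+-$ interactions. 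I expect the main obstacle to be the sharp bookkeeping in the resonant regime $\lam_1 \sim \lam_2$, where the angular Whitney decomposition must be matched precisely against the modulation scale and the matrix-valued $q_2^\pm, q_3^\pm$ contributions handled without losing the null gain.
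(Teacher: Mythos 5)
You should first note what the paper actually does here: it gives no proof at all --- Lemma \ref{null-esti} is imported verbatim as Corollary 1 of \cite{tes2d} --- so the only meaningful comparison is with the argument in that reference. Your skeleton (transfer principle to reduce to free Klein--Gordon waves, angular gain from the null symbols, angular/cube decomposition plus bilinear $L^2$ estimates, with the $++$/$+-$ asymmetry coming from the geometry) is indeed the right kind of machinery, and at that level of generality it matches how such estimates are established in \cite{tes2d}.

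However, two concrete points in your plan would fail if executed as written. First, the symbol bound $|q_j^\pm(\eta,\sigma)|\les \angle^2$ is only true for $q_1^\pm$; the symbols $q_2^\pm$ and $q_3^\pm$ vanish merely to first order in the relevant angle (e.g.\ $q_3^\pm=\pm(\widehat\eta_1\widehat\sigma_2-\widehat\eta_2\widehat\sigma_1)\al^1\al^2$ is of size $|\sin\theta|$, not $\theta^2$), and the $\left<\eta\right>^{-1}\left<\sigma\right>^{-1}$ terms are not ``corrections'' inside $q_2^\pm,q_3^\pm$ but are segregated into the separate forms $B_j^\pm$, which are handled by Lemma \ref{bilinear}, not here. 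A uniform quadratic gain would ``prove'' strictly stronger bounds than the lemma states, which signals that the bookkeeping in your resonant regime is off. Second, the proposed mechanism ``resonance identity $\Rightarrow$ modulation gain'' does not operate in this reduction: after the transfer principle the inputs are exact free waves with zero modulation, and the output is measured in $L^2_{t,x}$ with no constraint tying $\tau$ to $\left<\xi\right>$, so there is no modulation to gain from. What actually produces the factors $\lammin^{1/2}$, $\mu\lam_1^{-1/2}$, $\mu^{1/2}$ is a bilinear restriction-type computation (thickness/transversality of the interacting hyperboloids, together with the elementary constraint that in the $++$ low-output case the angle is $\les\mu/\lam_1$), and your displayed lower bound on the resonance function is not correct as stated nor is it the needed input. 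Since this quantitative step --- matching the first-order null gain against the correct bilinear $L^2$ estimate in each of the cases $\lam_1\nsim\lam_2$, $\lam_1\sim\lam_2$, $++$ versus $+-$ --- is exactly where the content of the lemma lies and is only asserted in your outline, the proposal has a genuine gap rather than being a complete alternative proof.
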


%\subsection{Bilinear estimates}

\begin{lem}[Lemma 10 of \cite{tes2d}]\label{bilinear} Let $\psi_{\lam_1},\phi_{\lam_2} \in V_{\pm}^2$ and  $Q,B$ be any one of $Q_j^\pm$'s and $B_j^\pm$'s, respectively. Then we have
\begin{align*}
\|P_\mu Q(\psi_{\lam_1} , \phi_{\lam_2})\| &\les \lam_1^{1-\frac{2}{r}} \lam_2^{\frac2r} \|\psi_{\lam_1}\|_{U_\pm^{\frac{2r}{r-2}}} \|\phi_{\lam_1}\|_{U_\pm^{r}} \quad\mbox{for all}\;\; 2<r<\infty, \\
\|P_\mu B(\psi_{\lam_1} , \phi_{\lam_2})\| &\les \frac{\lam_1^{1-s} \lam_2^{s}}{\lammin} \|\psi_{\lam_1}\|_{V_\pm^2} \|\phi_{\lam_1}\|_{V_\pm^2} \qquad\mbox{for all}\;\; 0<s<1.
\end{align*}
In particular
\begin{align}\label{endpoint}
\|P_\mu B(\psi_{\lam_1} , \phi_{\lam_2})\| &\les \frac{\lam_1^{1-s} \lam_2^{s}}{\lammin} \|\psi_{\lam_1}\|_{U_\pm^2} \|\phi_{\lam_1}\|_{U_\pm^2}  \qquad\mbox{for} \;s=0,1.
\end{align}
\end{lem}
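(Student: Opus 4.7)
The plan is to reduce both inequalities to 2D Klein--Gordon Strichartz bounds combined with pointwise control of the bilinear symbols $q_j^\pm$, $b_j^\pm$, and then to pass to the $U_\pm^p$ and $V_\pm^2$ spaces through the transfer principle (Proposition \ref{transfer}) and the logarithmic interpolation (Lemma \ref{log}).

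First I would collect the relevant 2D Strichartz estimates for the massive Klein--Gordon propagator: for a KG-admissible pair $(q,r)$,
\[
\|e^{\mp it\langle D\rangle} P_\lam f\|_{L_t^q L_x^r} \les \lam^{\al(q,r)}\|P_\lam f\|_{L_x^2},
\]
where $\al(q,r)$ is the usual derivative loss associated to the finite propagation speed. Proposition \ref{transfer} then upgrades this to $\|P_\lam u\|_{L_t^q L_x^r} \les \lam^{\al(q,r)} \|u\|_{U_\pm^q}$. For the first inequality of the lemma I would use the trivial bound $|q_j^\pm(\eta,\sigma)| \les 1$, so that $P_\mu Q(\psi_{\lam_1},\phi_{\lam_2})$ is controlled pointwise by $|\psi_{\lam_1}|\,|\phi_{\lam_2}|$; apply H\"older in space-time with the splitting $\tfrac{1}{q_1}+\tfrac{1}{q_2}=\tfrac{1}{r_1}+\tfrac{1}{r_2}=\tfrac{1}{2}$, choosing $q_1=\tfrac{2r}{r-2}$ and $q_2=r$; match $(q_i,r_i)$ to KG-admissible pairs; and apply Bernstein in the spatial variables. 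The powers $\lam_1^{1-2/r}$ and $\lam_2^{2/r}$ then emerge from the Strichartz derivative losses together with the transfer principle.

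For the bilinear forms $B_j^\pm$, the symbols $b_j^\pm$ are strictly better than size one: rationalizing the factors $\langle\xi\rangle - |\xi| = 1/(\langle\xi\rangle+|\xi|)$ and expanding term by term, each $b_j^\pm$ gains a structural factor of order $1/(\langle\eta\rangle\langle\sigma\rangle)$ in the high-frequency regime (and sometimes an additional $|\eta-\sigma|$), which on the Fourier supports $|\eta|\sim\lam_1$, $|\sigma|\sim\lam_2$ produces a gain of order $1/\lammin^2$. Combined with the Strichartz bound above, this yields the endpoint estimates \eqref{endpoint} in $U_\pm^2$ at $s=0,1$. The intermediate range $0<s<1$ in $V_\pm^2$ then follows by bilinear interpolation: decompose each $V_\pm^2$ input via Lemma \ref{log} as $u+w\in U_\pm^2+U_\pm^p$, insert the two endpoint bounds, and optimize the logarithmic parameter $M$ so that the $V^2\!\to\!U^p$ defect is absorbed.

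The main obstacle I anticipate is the bookkeeping needed to extract the clean ratio $\lam_1^{1-s}\lam_2^s/\lammin$ uniformly in all frequency configurations, especially the diagonal case $\lam_1\sim\lam_2$ where the cancellations in the null symbols $q_j^\pm$ and in the bilinear symbols $b_j^\pm$ must be teased apart, and in absorbing the logarithmic defect from Lemma \ref{log} without degrading the final power. Since the statement is quoted verbatim from Lemma 10 of \cite{tes2d}, in practice I would simply invoke that reference, but the ingredients above constitute the skeleton of a self-contained argument.
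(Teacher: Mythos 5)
The paper itself offers no proof of this lemma: it is imported verbatim as Lemma 10 of \cite{tes2d}, so your closing fallback of simply invoking that reference coincides with what the authors actually do, and your toolbox (Klein--Gordon Strichartz on the line $\frac1q+\frac1r=\frac12$, the transfer principle of Proposition \ref{transfer}, symbol bounds) is the right one; in particular your treatment of the $Q$ estimate via the H\"older splitting $q_1=\frac{2r}{r-2}$, $q_2=r$ is the standard way such a bound is obtained.

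If your sketch for the $B$ estimates is meant to stand on its own, however, two steps fail as written. First, the asserted uniform symbol gain of order $1/(\brae\bras)\sim\lammin^{-2}$ is false for $b_3^{\pm}$ (and for $B_4^{\pm}$): the numerator of $b_3^{\pm}$ contains $(\brae\pm\bras)\beta$, of size $\lam_{\max}$ on the supports $|\eta|\sim\lam_1$, $|\sigma|\sim\lam_2$, so the net gain is only $\lam_{\max}/(\lam_1\lam_2)=\lammin^{-1}$; this weaker gain is exactly why the right-hand side has the shape $\lam_1^{1-s}\lam_2^{s}/\lammin$, and it forces the endpoint cases $s=0,1$ to be run with the extreme H\"older arrangement that puts the whole $L_t^2L_x^\infty$-type loss ($\lam_1$ or $\lam_2$) on a single factor --- a symmetric $L_{t,x}^4\times L_{t,x}^4$ splitting combined with a $\lammin^{-1}$ gain only reproduces the $s=\tfrac12$ case. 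Second, the intermediate range $0<s<1$ with $V_{\pm}^2$ norms cannot be recovered from the two endpoints via Lemma \ref{log}: that lemma requires a companion estimate in a $U^p$ norm with $p>2$, which \eqref{endpoint} does not supply. The direct route is to take H\"older exponents $q_1=\frac{2}{1-s}$, $q_2=\frac{2}{s}$ (both strictly between $2$ and $\infty$), pick up the Strichartz losses $\lam_1^{2/q_1}\lam_2^{2/q_2}=\lam_1^{1-s}\lam_2^{s}$ through Proposition \ref{transfer}, and then use the embedding $V^2\hookrightarrow U^{q}$, $q>2$, of Lemma \ref{embedd}; the endpoints are precisely where one exponent degenerates to $2$ and the $U_{\pm}^2$ norms in \eqref{endpoint} become unavoidable. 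Finally, a bilinear multiplier with bounded symbol is not literally dominated pointwise by $|\psi_{\lam_1}|\,|\phi_{\lam_2}|$; for $q_j^{\pm}$ and $b_j^{\pm}$ one first splits the symbols into finite sums of tensor products of bounded multipliers in $\eta$ and $\sigma$ and then applies H\"older factor by factor.
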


%%%%%%%%%%%%%%%%%%%%%%%%%%%%%%%%%%%%%%%%%%%%%%%%%%%%%%%%%%%%%%%%%%%%%%%%%%%%%%%%%%%%%%%%%%%%%%%%%%%%%%%%%%%%%%%%%%%%%%%%%%%%%%%%%%%%%%%%%%%%%%%%%%%%%%%%%%%%%%%%%%%%%%%%%%%%%%%%%%%%%%%%%%%%%%%%%%%%%%%%%%%%%%%%%%%%%%%%%%%%%%%%%%%%%%%%%%%%%%%%%%%%%%%%%%%%%%%%%%%%%%%%%%%%%%%%%%%%%%%%%%%%%%%%%%%%%%%%%%%%%%%%%%%%%%%%%%%%%%%%%%%%
\subsection{Low-frequency estimates}\label{low-esti}
We introduce bilinear estimates for free waves, which are crucial for the low-frequency estimates of potential.  In \cite{yang, heya} one can find a 3d versions Corollary \ref{bilinear2} below. We adopt a similar argument to extend the 3d estimates to the 2d ones.

We first make a decomposition on the unit circle as in \cite{ste,yang}. For a fixed collection $ \Omega_\ka :=\{ \xi_{\ka}^\nu\}_\nu$ of unit vectors satisfying the conditions:
\begin{enumerate}
	\item[$(1)$] $|\xi_{\ka}^\nu - \xi_{\ka}^{\nu'}| \ge \ka^{-1}$ if $\nu \neq \nu'$,
	\item[$(2)$] For any unit vector $\xi$, there exists a $\xi_{\ka}^\nu$ such that $|\xi - \xi_{\ka}^\nu| < \ka$,
\end{enumerate}
we set $k_\ka^\nu := \sigma_\ka^\nu \left( \sum_{\ka} \sigma_{\ka}^\nu\right)^{-1}$ and $K_\ka^\nu := \mathcal{F}^{-1} k_{\ka}^\nu \mathcal{F}$ and $\sigma_\ka^\nu = \rho\left( \ka (\frac{\xi}{|\xi|} - \xi_{\ka}^\nu)\right)$, where $\rho$ is the cut-off function as stated in notation (3).  Then we get the following lemma.

\begin{lem}[Lemma 3.3 of \cite{behe2}] For any $\xi_{\ka}^{\nu},\xi_{\ka}^{\nu'} \in \Omega_\ka$ with $|\pm_1\xi_{\ka}^{\nu} - \pm_2 \xi_{\ka}^{\nu'}|\le \ka^{-1}$, we obtain
	$$
	\left| \left< \Pi_{\pm_1}(\lam_1 \xi_{\ka}^{\nu})v , \beta \Pi_{\pm_2}(\lam_2 \xi_{\ka}^{\nu'})w \right> \right| \les \ka^{-1} |v||w|
	$$
	for all $v,w \in \mathbb{C}^2$.
\end{lem}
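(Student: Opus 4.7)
The plan is to reduce the inner-product estimate to an operator-norm bound on the product of two Dirac projections, and then to extract the small factor $\kappa^{-1}$ by a direct matrix calculation exploiting the Clifford relations of the Dirac matrices.

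First, I would use the anticommutation $\alpha^j\beta+\beta\alpha^j = 0$ to transfer $\beta$ across one projection. Writing $M(\sigma):=(\alpha\cdot\sigma+\beta)/\langle\sigma\rangle$, a short calculation gives $\beta\, M(\sigma) = M(-\sigma)\,\beta$ and hence
\[
\beta\,\Pi_{\pm}(\sigma) = \Pi_{\pm}(-\sigma)\,\beta.
\]
Combined with the self-adjointness of the projections this yields
\[
\bigl\langle\Pi_{\pm_1}(\lambda_1\xi_\kappa^\nu)v,\;\beta\,\Pi_{\pm_2}(\lambda_2\xi_\kappa^{\nu'})w\bigr\rangle
= \bigl\langle\Pi_{\pm_2}(-\lambda_2\xi_\kappa^{\nu'})\,\Pi_{\pm_1}(\lambda_1\xi_\kappa^\nu)v,\;\beta w\bigr\rangle,
\]
so that by Cauchy--Schwarz it suffices to bound the operator norm of the composition $\Pi_{\pm_2}(-\lambda_2\xi_\kappa^{\nu'})\,\Pi_{\pm_1}(\lambda_1\xi_\kappa^\nu)$ by $\kappa^{-1}$.

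Next, I would parametrize the projections as $\Pi_\pm(\lambda\xi) = \tfrac12\bigl(I\pm\mu(\lambda)\,\xi\cdot\alpha\pm\nu(\lambda)\beta\bigr)$ with $\mu(\lambda)=\lambda/\langle\lambda\rangle$ and $\nu(\lambda)=1/\langle\lambda\rangle$, and multiply out the composition. The two Clifford identities
\[
\alpha^i\alpha^j+\alpha^j\alpha^i = 2\delta_{ij}I, \qquad \alpha^j\beta+\beta\alpha^j = 0,
\]
together with the 2d consequence $(\xi_1\cdot\alpha)(\xi_2\cdot\alpha) = (\xi_1\cdot\xi_2)\,I + (\xi_1\wedge\xi_2)\,\alpha^1\alpha^2$, reduce the product to a linear combination of the five matrix monomials $I,\,\beta,\,\alpha^j,\,\alpha^j\beta,\,\alpha^1\alpha^2\beta$ whose scalar coefficients are polynomials in $\mu_i,\nu_i$ and in the three ``angular'' quantities (with $s_i:=\pm_i$)
\[
s_1\xi_\kappa^\nu - s_2\xi_\kappa^{\nu'},\qquad 1 - s_1 s_2\,\xi_\kappa^\nu\cdot\xi_\kappa^{\nu'},\qquad \xi_\kappa^\nu\wedge\xi_\kappa^{\nu'}.
\]

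Finally, since $\xi_\kappa^\nu$ and $\xi_\kappa^{\nu'}$ are unit vectors, the hypothesis $|s_1\xi_\kappa^\nu - s_2\xi_\kappa^{\nu'}|\le\kappa^{-1}$ supplies the elementary bounds
\[
1 - s_1 s_2\,\xi_\kappa^\nu\cdot\xi_\kappa^{\nu'} = \tfrac12\bigl|s_1\xi_\kappa^\nu - s_2\xi_\kappa^{\nu'}\bigr|^2 \le \tfrac12\kappa^{-2},\qquad \bigl|\xi_\kappa^\nu\wedge\xi_\kappa^{\nu'}\bigr|\le\bigl|s_1\xi_\kappa^\nu - s_2\xi_\kappa^{\nu'}\bigr|\le\kappa^{-1},
\]
so that every surviving coefficient is controlled by $\kappa^{-1}$. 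Combined with $|\mu_i|,|\nu_i|\le 1$ and $\|\alpha^j\|=\|\beta\|=1$, this delivers the desired bound $\kappa^{-1}|v||w|$. The delicate point, and the step I expect to consume most of the work, is the bookkeeping of the ``mass'' contributions: the individual terms arising from the $\nu(\lambda_i)\beta$ pieces of the projections do not vanish at $s_1\xi_\kappa^\nu = s_2\xi_\kappa^{\nu'}$, and one must exploit the identity $\mu_i^2+\nu_i^2 = 1$ to verify that they combine so as to leave behind only coefficients controlled by the three angular quantities above. Once this algebraic cancellation is laid bare, the estimate is immediate.
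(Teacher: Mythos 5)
Your reduction is fine as far as it goes: $\beta\Pi_{\pm}(\sigma)=\Pi_{\pm}(-\sigma)\beta$ is correct, passing to the operator norm of $\Pi_{\pm_2}(-\lambda_2\xi_\kappa^{\nu'})\Pi_{\pm_1}(\lambda_1\xi_\kappa^{\nu})$ is correct, and the purely angular coefficients in the Clifford expansion are indeed $O(\kappa^{-1})$ exactly as you say. The gap is precisely the step you flagged and postponed: the mass contributions do \emph{not} cancel, and no use of $\mu_i^2+\nu_i^2=1$ will make them. In the expansion of $\Pi_{\pm_1}(\lambda_1\xi_\kappa^{\nu})\Pi_{\pm_2}(-\lambda_2\xi_\kappa^{\nu'})$ the monomials $\beta$ and $\alpha^j\beta$ occur with coefficients $\tfrac14(s_1\nu_1+s_2\nu_2)$ and $\tfrac14 s_1s_2\,(\mu_1\nu_2\,\xi_\kappa^{\nu}+\nu_1\mu_2\,\xi_\kappa^{\nu'})_j$, which are of size $\langle\lambda_1\rangle^{-1}+\langle\lambda_2\rangle^{-1}$ and are completely insensitive to $\kappa$. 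That they cannot conspire to cancel is shown by a two-line example: take $\pm_1=\pm_2=+$, $\xi_\kappa^{\nu}=\xi_\kappa^{\nu'}=e_1$, $\lambda_1=\lambda_2=1$ (the hypothesis $|\pm_1\xi_\kappa^{\nu}-\pm_2\xi_\kappa^{\nu'}|=0\le\kappa^{-1}$ then holds for every $\kappa$); a direct computation gives
\[
\Pi_{+}(e_1)\,\beta\,\Pi_{+}(e_1)=\tfrac14\bigl(\sqrt2\,I+\alpha^1+\beta\bigr),
\]
whose operator norm equals $2^{-1/2}$ (equivalently, $|\langle u,\beta u\rangle|=2^{-1/2}$ for the unit vector $u$ spanning the range of $\Pi_+(e_1)$). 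So the inequality with only $\kappa^{-1}$ on the right fails once $\kappa$ is large, and in the intended application $\kappa\sim\mu^{-1}\lambda_1$ is large; the cancellation you hope to ``lay bare'' does not exist, and your plan cannot be completed as written. Note the paper itself offers no proof to compare with: it quotes the lemma from \cite{behe2}, and as transcribed here the mass-correction terms of the source have been dropped.

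What your computation does prove, once you stop insisting on cancellation, is the correct massive bound
\[
\bigl|\bigl\langle \Pi_{\pm_1}(\lambda_1\xi_\kappa^{\nu})v,\;\beta\,\Pi_{\pm_2}(\lambda_2\xi_\kappa^{\nu'})w\bigr\rangle\bigr|
\lesssim\bigl(\kappa^{-1}+\langle\lambda_1\rangle^{-1}+\langle\lambda_2\rangle^{-1}\bigr)|v|\,|w|,
\]
since every coefficient is $O(\kappa^{-1})$ except those carrying a factor $\nu_i=\langle\lambda_i\rangle^{-1}$. This weaker statement is all that is needed downstream: in Proposition \ref{non-resonance} one has $\lambda_1\sim\lambda_2\ge1$ and $\kappa^{-1}\sim\mu\lambda_1^{-1}\le\lambda_1^{-1}$, so the cap-pair bound becomes $\lesssim\lambda_1^{-1}$ and the conclusion $\lesssim\mu\|\psi_{1,\lambda_1}\|_{V_{\pm_1}^2}\|\psi_{2,\lambda_2}\|_{V_{\pm_2}^2}$ survives unchanged, as does Corollary \ref{bilinear2}. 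So keep your transfer-of-$\beta$ reduction and the Clifford bookkeeping, but state and prove the lemma with the $\langle\lambda_i\rangle^{-1}$ terms included rather than chasing a cancellation that is not there.
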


% We first introduce Klein-Gordon Strichartz estimates. %{\bf Why do we need this lemma ? This is trivial. }

Next we localize the Klein-Gordon Strichartz estimates. For this purpose let us define a frequency localization operator $\Gam$. Let $\gam \in C_0^\infty(-\frac23,\frac23)$ with $\gam(s) = 1$ if $|s| \le \frac13$ and $B_{\lam'} = \lam'\mathbb{Z}^2$ for $\lam' \in 2^{\mathbb{Z}}$. We define a cut-off function $\gam_{\lam',n}$ such as $\gam_{\lam',n}(\xi) = \gam \left(\frac{\xi_1 - n_1}{\lam'}\right) \gam \left(\frac{\xi_2 - n_2}{\lam'}\right)$, where $\xi = (\xi_1, \xi_2)$ and $n = (n_1,n_2)$ for $\xi \in \rt$ and $n \in B_{\lam'}$ and we prescribe the frequency localization operator $\Gam_{\lam',n} = \mathcal{F}^{-1}\gam_{\lam',n}\mathcal{F}$. Then we get the following.

\begin{lem} Let  $(q,r)$ satisfy that $\frac1q + \frac1r = \frac12$. Then
	\begin{align}\label{kg-stri2}
	\|\Gam_{\lam',n}\epm f_\lam\|_{L_t^q L_x^r} \les (\lam')^{\frac2q}\|f_\lam\|_{L_x^2}
	\end{align}
	for all $\lam,\lam' \in 2^\mathbb{Z}(\lam' \le \lam)$.	
\end{lem}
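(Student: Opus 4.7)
I would adapt the three-dimensional argument of \cite{yang, heya} to dimension two, proceeding via a Galilean-boost reduction, a stationary-phase dispersive estimate, and the Keel--Tao $TT^*$ machinery. The plan is to move the center of the frequency cube to the origin, strip the resulting transport term, and then view what remains as a Schrödinger-type propagator at the unit frequency scale.

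The first step is a Galilean reduction. Write $\xi=n+\eta$ and Taylor-expand
$$\langle\xi\rangle=\langle n\rangle+\frac{n\cdot\eta}{\langle n\rangle}+\Phi_n(\eta),\qquad \Phi_n(\eta):=\langle n+\eta\rangle-\langle n\rangle-\frac{n\cdot\eta}{\langle n\rangle},$$
so that $\Phi_n$ vanishes to second order at $\eta=0$. The factor $e^{\mp it\langle n\rangle}$ is a modulation and $e^{\mp it(n\cdot\eta)/\langle n\rangle}$ induces a spatial translation by $\pm tn/\langle n\rangle$ (Galilean boost), and both preserve the $L_t^qL_x^r$-norm. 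The lemma thereby reduces to
$$\bigl\|e^{\mp it\Phi_n(D)}\Gam_{\lam',0}\,g\bigr\|_{L_t^qL_y^r}\les(\lam')^{2/q}\|g\|_{L^2_x},$$
where $\widehat{g}(\eta)=\widehat{f_\lam}(n+\eta)$ has the same $L^2$-norm as $f_\lam$.

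The second step is a dispersive estimate for the convolution kernel
$$K_t(y)=\int_{\rt} e^{iy\cdot\eta\mp it\Phi_n(\eta)}\,|\gam_{\lam',0}(\eta)|^2\,d\eta$$
by stationary phase, using that $\nabla^2\Phi_n(0)=\nabla^2\langle\xi\rangle\big|_{\xi=n}$ is positive with eigenvalues $\langle n\rangle^{-1}$ (tangential) and $\langle n\rangle^{-3}$ (radial). Performing a one-dimensional stationary-phase analysis in each eigendirection separately yields
$$\|K_t\|_{L^\infty_y}\les\min\!\bigl(\lam',\sqrt{\lam/|t|}\bigr)\min\!\bigl(\lam',\sqrt{\lam^3/|t|}\bigr),$$
which interpolates between the trivial volume bound $(\lam')^2$ at short times and the full two-dimensional stationary-phase bound $\lam^2/|t|$ at long times. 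Interpolating this bound with the trivial $L^2\to L^2$ estimate and applying Hardy--Littlewood--Sobolev in the time variable (for the long-time tail) together with Young's inequality (for the short-time part), the $TT^*$ argument produces the claimed Strichartz bound precisely under the admissibility relation $\tfrac{1}{q}+\tfrac{1}{r}=\tfrac{1}{2}$.

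The principal obstacle will be the \emph{anisotropy} of $\nabla^2\langle\xi\rangle\big|_{\xi=n}$ at high frequency: its two eigenvalues differ by the factor $\lam^{-2}$, so a single isotropic rescaling of the dyadic cube does not yield a uniformly nondegenerate Schrödinger-type phase, and the tangential and radial directions must be treated separately in the stationary-phase analysis. The hypothesis $\lam'\le\lam$ enters at this point: it is exactly what is needed to keep the quadratic Taylor expansion of $\Phi_n$ dominant over its higher-order remainders throughout the cube of side $\lam'$, ensuring that the dispersive bound above holds uniformly in $n$ and $\lam'$.
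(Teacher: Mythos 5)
Your Galilean-type reduction and your stationary-phase kernel bound are essentially fine, but the final step---the claim that interpolation with the $L^2$ bound plus Hardy--Littlewood--Sobolev/Young inside the $TT^*$ machinery ``produces the claimed Strichartz bound''---is a genuine gap, and it is quantitative, not cosmetic. Writing $k(t):=\min(\lam',\sqrt{\lam/|t|})\min(\lam',\sqrt{\lam^3/|t|})$, interpolation gives $\|U(t)U(s)^*\|_{L^{r'}\to L^r}\les k(t-s)^{2/q}$, so you need convolution with $k^{2/q}$ to map $L^{q'}_t\to L^q_t$ with constant $(\lam')^{4/q}$. It does not: on the flat region $|t|\le \lam/(\lam')^2$ Young's inequality yields $(\lam')^{4/q}\bigl(\lam/(\lam')^2\bigr)^{2/q}=\lam^{2/q}$, and on the tail $|t|\ge \lam^3/(\lam')^2$, where your bound is only $\lam^2/|t|$, HLS yields $\lam^{4/q}$ (truncating the HLS kernel away from $t=0$ does not lower its $L^{q'}\to L^q$ norm, as one sees by testing on functions spread over scales much larger than the truncation). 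So your scheme returns only the unlocalized constant $\lam^{2/q}$ of \eqref{KG-stri}; the hypothesis $\lam'\le\lam$ does not repair this, and the anisotropy you flag (radial curvature $\sim\langle n\rangle^{-3}$) is precisely the obstruction rather than a technicality to be absorbed.

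Moreover the gap cannot be closed, because \eqref{kg-stri2} as stated fails once $\lam'\ll\lam$: take $\widehat f$ a bump on a box inside the cube of radial width $\lam'$ and angular width $\lam'/\lam$ centered at $|n|\sim\lam$. Because the radial curvature is only $\sim\lam^{-3}$, this packet stays coherent for $|t|\les\lam^3/(\lam')^2$ on a moving spatial box of dimensions $(\lam')^{-1}\times\lam(\lam')^{-1}$, and for $q=r=4$ a direct computation gives $\|\Gam_{\lam',n}\epm f_\lam\|_{L^4_{t,x}}\gtrsim\lam^{1/2}\|f_\lam\|_{L^2_x}\gg(\lam')^{1/2}\|f_\lam\|_{L^2_x}$; in fact the sharp cube-localized constant is comparable to $\lam^{2/q}$, i.e.\ no better than the unlocalized estimate. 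Note also that the paper gives no proof to compare against: it merely asserts that \eqref{kg-stri2} ``follows immediately'' from \eqref{KG-stri}, and since $\brad$ has no Galilean invariance there is no recentering of the cube that converts $\lam^{2/q}$ into $(\lam')^{2/q}$. So your instinct that a real argument is needed was sound, but neither your scheme nor the paper's one-line citation delivers the stated constant. (The downstream application is less damaged than it looks: in Proposition \ref{non-resonance} the correct per-piece constant $\lam_1^{2/q}$ combined with the null-structure gain $\ka^{-1}=\mu\lam_1^{-1}$ still gives $\mu\lam_1^{-1}\cdot\lam_1^{1/2}\cdot\lam_1^{1/2}=\mu$.)
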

This lemma follows immediately from the standard Strichartz estimate:
\begin{lem}[Klein-Gordon Strichartz estimates \cite{chozxi}]\label{KG-stri} Let $(q,r)$ satisfy that $\frac1q + \frac1r = \frac12$. Then
\begin{align}\label{KG-stri}
\|\epm f_\lam\|_{L_t^q L_x^r} \les \lam^{\frac2q}\|f\|_{L_x^2}
\end{align}
for all $\lam \in 2^\mathbb{Z}$.		
\end{lem}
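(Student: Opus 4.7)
The plan is the classical two-step strategy for Strichartz estimates: derive a frequency-localized dispersive bound, pair it with $L^2$ unitarity, and conclude via the abstract Keel--Tao argument. Since $P_\lam^1 \equiv 0$ when $\lam < 1$, only the case $\lam \ge 1$ requires work, in which the relevant multiplier $\widetilde\rho_\lam$ may be taken to be a smooth bump supported on the annulus $\{|\xi| \sim \lam\}$ (with a fattened bump adapted to $P_{\le 1}$ in the endpoint case $\lam = 1$).

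For the dispersive step I would analyse the Schwartz kernel
$$
K_\lam^\pm(t,x) = \frac{1}{(2\pi)^2}\int_{\rt} e^{ix\cdot\xi \pm it\langle\xi\rangle} \widetilde\rho_\lam(\xi)\,d\xi
$$
by stationary phase in $\xi$, writing the phase as $t(\langle\xi\rangle \pm x\cdot\xi/t)$. The Hessian of $\langle\xi\rangle$ diagonalises into a tangential eigenvalue $\langle\xi\rangle^{-1}$ and a radial eigenvalue $\langle\xi\rangle^{-3}$, so on $|\xi| \sim \lam \ge 1$ its determinant is of order $\lam^{-4}$. Standard stationary phase then gives $|K_\lam^\pm(t,x)| \les \lam^2 |t|^{-1}$ whenever a critical point lies inside the support of $\widetilde\rho_\lam$, while integration by parts against $\nabla_\xi \phi$ in the non-stationary regime yields arbitrary polynomial decay in $t$. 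For $|t| \le 1$ one just uses the trivial volume bound $|K_\lam^\pm| \les \lam^2$. Combining yields the $L_x^1 \to L_x^\infty$ dispersive estimate
$$
\|\epm P_\lam^1 f\|_{L_x^\infty} \les \lam^2 (1+|t|)^{-1} \|f\|_{L_x^1}.
$$

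Paired with the unitarity $\|\epm f\|_{L_x^2} = \|f\|_{L_x^2}$, this data is $\sigma$-admissible in the Keel--Tao sense with $\sigma = 1$ and dispersive constant $\lam^2$. The abstract $TT^\ast$/Hardy--Littlewood--Sobolev interpolation then delivers, for every sharp $1$-admissible pair $(q,r)$, i.e.\ $\frac1q + \frac1r = \frac12$ with $q > 2$,
$$
\|\epm P_\lam^1 f\|_{L_t^q L_x^r} \les (\lam^2)^{1/q}\|f\|_{L_x^2} = \lam^{2/q}\|f\|_{L_x^2},
$$
which is precisely \eqref{KG-stri}.

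The main point to be careful about is correctly tracking the two-scale structure of the Hessian: both eigenvalues (tangential $\sim \lam^{-1}$ and radial $\sim \lam^{-3}$) contribute to the determinant and together produce the Schr\"odinger-type $|t|^{-1}$ decay with the loss $\lam^2$, in contrast to the pure wave case where only the tangential direction curves and one obtains the slower $|t|^{-1/2}$ decay with a $\lam^{3/2}$ loss. The endpoint $(q,r) = (2,\infty)$ is as usual excluded in two spatial dimensions and is read out of the statement implicitly.
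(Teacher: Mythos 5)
The paper does not actually prove this lemma: it is quoted verbatim from \cite{chozxi}, so there is no internal argument to compare against, and your job was to supply the standard proof. Your route --- a frequency-localized $L^1_x \to L^\infty_x$ dispersive bound for $e^{\pm it\langle D\rangle}P_\lambda^1$ obtained by stationary phase, paired with $L^2$ unitarity and the non-endpoint $TT^\ast$/Hardy--Littlewood--Sobolev machinery --- is exactly the standard one, and your bookkeeping of the constant ($A=\lambda^2$, $\sigma=1$, hence $A^{1/q}=\lambda^{2/q}$ for sharp pairs $\tfrac1q+\tfrac1r=\tfrac12$, $q>2$) is correct, as is the reduction to $\lambda\ge 1$ via $P_\lambda^1$. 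Two points deserve more care than your sketch gives them. First, the full two-dimensional stationary phase bound $|K_\lambda^\pm(t,x)|\lesssim \lambda^2|t|^{-1}$ is only directly available when $|t|\gtrsim\lambda$: the radial Hessian eigenvalue $\sim\lambda^{-3}$ acts over a radial support width $\sim\lambda$, so the radial phase variation is only of size $|t|/\lambda$, and for $1\lesssim|t|\lesssim\lambda$ one can exploit only the angular direction, which yields $|K_\lambda^\pm|\lesssim\lambda^{3/2}|t|^{-1/2}$. Since $\lambda^{3/2}|t|^{-1/2}\le\lambda^{2}|t|^{-1}$ precisely in that regime, the uniform bound $\lambda^{2}(1+|t|)^{-1}$ you assert does hold, but the proof should make this two-regime split explicit (together with the non-stationary/transition region where $|x|/|t|$ is near the edge of the range of $|\xi|/\langle\xi\rangle$ on the support), rather than invoking ``standard stationary phase'' in one stroke. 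Second, the exclusion of the endpoint $(q,r)=(2,\infty)$, which you flag, is genuinely necessary in two dimensions even after frequency localization; this is harmless here since the paper only applies the estimate for $2<q<\infty$ (in practice $q=4$). With those clarifications your argument is a complete and correct substitute for the cited result.
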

Now we extend the localized Klein-Gordon Strichartz estimates globally.
\begin{lem}
	Let $(q,r)$ satisfy that $\frac1q + \frac1r = \frac12$ and $2<q< \infty$. Then
	\begin{align}\label{localizing-stri}
	\left( \sum_{\nu\in \Omega_\ka}\sum_{n \in B_{\lam'}} \|K_l^\nu \Gam_{\lam',n}  \psi_\lam\|_{L_t^qL_x^r}^2 \right)^{\frac12} \les ( \lam')^\frac2q \|\psi_\lam\|_{V_{\pm}^2}.
	\end{align}
\end{lem}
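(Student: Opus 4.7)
I plan to combine the localized Strichartz estimate \eqref{kg-stri2} with an $L_x^2$-orthogonality argument on the Fourier side and then transfer the resulting $\ell^2_{\nu,n}$-summed bound to $V_\pm^2$ via the adapted-function-space machinery of Section 2.

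For the free-wave estimate I observe that $K_l^\nu$, $\Gam_{\lam',n}$, and $\epm$ are Fourier multipliers and hence commute. Introducing a fattened cutoff $\widetilde{\Gam}_{\lam',n}$ equal to one on the support of $\gam_{\lam',n}$ and applying \eqref{kg-stri2} with $f_\lam$ replaced by $\widetilde{\Gam}_{\lam',n}K_l^\nu f_\lam$ gives
\begin{equation*}
\|K_l^\nu\Gam_{\lam',n}\epm f_\lam\|_{L_t^qL_x^r} \les (\lam')^{2/q}\|\widetilde{\Gam}_{\lam',n}K_l^\nu f_\lam\|_{L_x^2}.
\end{equation*}
Squaring, summing in $(\nu,n)$, and using Plancherel together with the finite overlap of the symbols $\widetilde{\gam}_{\lam',n}k_l^\nu$ produces the free-wave $\ell^2_{\nu,n}$-summed estimate
\begin{equation*}
\sum_{\nu,n}\|K_l^\nu\Gam_{\lam',n}\epm f_\lam\|_{L_t^qL_x^r}^2 \les (\lam')^{4/q}\|f_\lam\|_{L_x^2}^2.
\end{equation*}

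To transfer this bound to $U_\pm^2$, consider a $U^2$-atom $a(t)=\sum_k\chi_{[t_{k-1},t_k)}(t)\phi_k$ with $\sum_k\|\phi_k\|_{L_x^2}^2=1$. Disjointness of the subintervals gives for each $(\nu,n)$,
\begin{equation*}
\|K_l^\nu\Gam_{\lam',n}\emp a\|_{L_t^qL_x^r}^q \le \sum_k\|K_l^\nu\Gam_{\lam',n}\emp\phi_k\|_{L_t^qL_x^r}^q.
\end{equation*}
Since $q>2$, the subadditivity $(\sum_k c_k)^{2/q}\le\sum_k c_k^{2/q}$ combined with the free-wave estimate on each $\phi_k$ yields
\begin{align*}
\sum_{\nu,n}\|K_l^\nu\Gam_{\lam',n}\emp a\|_{L_t^qL_x^r}^2
&\le \sum_k\sum_{\nu,n}\|K_l^\nu\Gam_{\lam',n}\emp\phi_k\|_{L_t^qL_x^r}^2\\
&\les (\lam')^{4/q}\sum_k\|\phi_k\|_{L_x^2}^2 = (\lam')^{4/q}.
\end{align*}
Atomic decomposition of $\epm\psi_\lam\in U^2$ and the triangle inequality in $\ell^2_{(\nu,n)}(L_t^qL_x^r)$ then deliver
\begin{equation*}
\Bigl(\sum_{\nu,n}\|K_l^\nu\Gam_{\lam',n}\psi_\lam\|_{L_t^qL_x^r}^2\Bigr)^{1/2} \les (\lam')^{2/q}\|\psi_\lam\|_{U_\pm^2}.
\end{equation*}

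Finally, I would upgrade from $U_\pm^2$ to $V_\pm^2$ via the logarithmic decomposition of Lemma \ref{log}: writing $\psi_\lam = u + w$ with $u\in U_\pm^2$ and $w\in U_\pm^p$ for some $p\in(2,q]$, the previous step controls $u$, and a companion $\ell^2_{(\nu,n)}$-summed bound for $w$ on $U_\pm^p$ is obtained by the analogous atom argument, again exploiting the $L_x^2$-orthogonality of $\Gam_{\lam',n}K_l^\nu$. Optimizing the parameter $M$ in Lemma \ref{log} then gives \eqref{localizing-stri}. The main obstacle I expect lies in this companion $U_\pm^p$-estimate: a naive repetition of the subadditivity step on a $U^p$-atom produces $\sum_k\|\phi_k\|_{L_x^2}^2$ on the right-hand side, which is no longer controlled under the atomic normalization $\sum_k\|\phi_k\|_{L_x^2}^p=1$ when $p>2$; the 2d analogue of the 3d argument cited just before the lemma is what closes this gap without a logarithmic loss and matches the constant claimed in \eqref{localizing-stri}.
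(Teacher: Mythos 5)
Your first two steps are sound: the $\ell^2_{(\nu,n)}$-summed free-wave estimate and its transfer to $U_\pm^2$ via $U^2$-atoms are correct as written. But the proof does not close, and you say so yourself: the statement requires the $V_{\pm}^2$ norm on the right-hand side (this is essential, since in Propositions \ref{non-resonance} and \ref{resonance} the inputs are only controlled in $V_{\pm_j}^2$), and your route to it — Lemma \ref{log} plus a companion $\ell^2$-summed $U_\pm^p$ estimate — is left open. The obstruction you identify is real: a $U^p$-atom is normalized by $\sum_k\|\phi_k\|_{L_x^2}^p=1$, so the quantity $\sum_k\|\phi_k\|_{L_x^2}^2$ produced by your subadditivity step is unbounded when $p>2$, and nothing in your argument replaces it. The appeal to "the 2d analogue of the 3d argument cited just before the lemma" is not a proof, and in fact that citation concerns the bilinear estimates, not this square-summed Strichartz bound. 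So as it stands there is a genuine gap precisely at the $U^2\to V^2$ upgrade.

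The paper closes this in a simpler way that avoids atoms and interpolation altogether, and the key is to put the $(\nu,n)$-square-summation on the $V^2$ side rather than on the $L_x^2$-data side of a free-wave estimate. Since $2<q<\infty$, apply the transfer principle (Proposition \ref{transfer}) with exponent $q$ to \eqref{kg-stri2} and then the embedding $V_\pm^2\hookrightarrow U_\pm^q$ (Lemma \ref{embedd}) to each localized piece separately:
\begin{align*}
\|K_{\ka}^\nu \Gam_{\lam',n}\psi_\lam\|_{L_t^qL_x^r} \les (\lam')^{\frac2q}\|K_{\ka}^\nu \Gam_{\lam',n}\psi_\lam\|_{U_{\pm}^q} \les (\lam')^{\frac2q}\|K_{\ka}^\nu \Gam_{\lam',n}\psi_\lam\|_{V_{\pm}^2},
\end{align*}
and then conclude by almost-orthogonality of the finitely overlapping Fourier localizations in $V^2$, namely $\bigl(\sum_{\nu,n}\|K_{\ka}^\nu \Gam_{\lam',n}\psi_\lam\|_{V_{\pm}^2}^2\bigr)^{\frac12}\les\|\psi_\lam\|_{V_{\pm}^2}$. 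No atomic decomposition, no Lemma \ref{log}, and no companion $U^p$ bound is needed; the hypothesis $q>2$ is exactly what licenses the embedding step. If you want to salvage your own scheme, this is also the missing ingredient: apply the transfer/embedding per piece first, and do the summation at the level of $V^2$ norms of the localized functions rather than over atoms.
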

\begin{proof}
By Proposition \ref{transfer} and \eqref{kg-stri2}, we get
\begin{align*}
\|\gam_{\lam',n}\epm \psi_\lam\|_{L_t^q L_x^r} \les ( \lam')^\frac2q \|\psi_\lam\|_{U_{\pm}^q} \les ( \lam')^\frac2q \|\psi_\lam\|_{V_{\pm}^2}.
\end{align*}
From the definition of operators $P_\lam^1, \,\Gam_{\lam',n} $ we see that
\begin{align*}
\| K_{\ka}^\nu \Gam_{\lam',n} \psi_{\lam}  \|_{L_t^qL_x^r} \les  ( \lam')^\frac2q \|K_{\ka}^\nu \Gam_{\lam',n} \psi_{\lam} \|_{V_{\pm}^2}.
\end{align*}
Then, by the orthogonality with respect to $n,\nu$, we obtain
\begin{align*}
\left( \sum_{\nu\in \Omega_\ka}\sum_{n \in B_{\lam'}} \|K_l^\nu \Gam_{\lam',n}  \psi_\lam\|_{L_t^qL_x^r}^2 \right)^{\frac12} &\les (\lam')^{\frac2q} \left( \sum_{\nu\in \Omega_\ka}\sum_{n \in B_{\lam'}} \|K_l^\nu \Gam_{\lam',n}  \psi_\lam\|_{V_{\pm}^2}^2 \right)^{\frac12}\\
&\les (\lam')^{\frac2q} \|\psi_\lam\|_{V_{\pm}^2}.
\end{align*}
\end{proof}

\begin{prop}\label{non-resonance}
Let  $\mu,\lam_1,\lam_2 \in 2^{\mathbb{Z}}$ satisfy $\mu \ll \lam_1 \sim \lam_2$ and $\lam_1,\lam_2 \ge 1$. Suppose $\pm_1 = \pm_2$.  Assume that $\psi_{j,\lam_j} = \Pi_{\pm_j}(D)\psi_{j,\lam_j} \in V_{\pm_j}^2$ for $j = 1,2$. Then
\begin{align*}
\left\|P_\mu \left< \psi_{1,\lam_1}, \beta\psi_{2,\lam_2} \right> \right\| \les \mu\|\psi_{1,\lam_1}\|_{V_{\pm_1}^2} \|\psi_{2,\lam_2}\|_{V_{\pm_2}^2}.
\end{align*}
\end{prop}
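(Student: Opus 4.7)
The underlying geometric observation is that the Fourier support constraint $\eta - \sigma = \xi$ with $|\eta| \sim |\sigma| \sim \lam := \lam_1 \sim \lam_2$ and $|\xi| \sim \mu \ll \lam$ forces the angle between $\widehat\eta$ and $\widehat\sigma$ to be at most $O(\mu/\lam)$, and also forces $\eta, \sigma$ to lie in neighboring Fourier cubes of side $\mu$. I would exploit this by a joint angular-plus-cube decomposition at angular scale $\ka \sim \lam/\mu$ and cube scale $\lam' = \mu$, and then invoke the localized Klein--Gordon Strichartz estimate \eqref{localizing-stri} at $(q,r) = (4,4)$.

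First, I would write
$$\psi_{j,\lam_j} = \sum_{\nu \in \Omega_\ka}\sum_{n \in B_\mu} K_\ka^\nu\,\Gam_{\mu,n}\psi_{j,\lam_j} \qquad (j = 1, 2),$$
so that
$$P_\mu \langle \psi_{1,\lam_1}, \beta\psi_{2,\lam_2}\rangle = P_\mu \sum_{\nu, \nu', n, n'} \langle K_\ka^\nu \Gam_{\mu,n}\psi_{1,\lam_1}, \beta K_\ka^{\nu'}\Gam_{\mu,n'}\psi_{2,\lam_2}\rangle.$$
The Fourier-support conditions on $\eta, \sigma$ together with $|\xi| \sim \mu$ restrict the sum to admissible tuples with $|\xi_\ka^\nu - \xi_\ka^{\nu'}| \lesssim \ka^{-1}$ and $|n - n'| \lesssim \mu$; since $B_\mu = \mu\mathbb{Z}^2$ and by the choice of $\ka$, each fixed $(\nu, n)$ has only $O(1)$ admissible partners $(\nu', n')$ and vice versa.

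Second, I would apply the triangle inequality followed by H\"older $L^2_{t,x} \hookrightarrow L^4_{t,x} \cdot L^4_{t,x}$ on each term, then a Schur-type Cauchy--Schwarz exploiting the bounded overlap of partner indices, to arrive at
$$\|P_\mu \langle \psi_{1,\lam_1}, \beta\psi_{2,\lam_2}\rangle\| \lesssim \prod_{j=1}^{2}\left( \sum_{\nu, n} \|K_\ka^\nu\Gam_{\mu,n}\psi_{j,\lam_j}\|_{L^4_{t,x}}^2 \right)^{\frac12}.$$
Finally, applying \eqref{localizing-stri} with $(q,r) = (4,4)$ and $\lam' = \mu \le \lam$ (which is admissible since $\mu \ll \lam$), each square-sum is bounded by $\mu^{\frac12}\|\psi_{j,\lam_j}\|_{V_{\pm_j}^2}$, and the product gives precisely $\mu\|\psi_{1,\lam_1}\|_{V_{\pm_1}^2}\|\psi_{2,\lam_2}\|_{V_{\pm_2}^2}$.

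The hypothesis $\pm_1 = \pm_2$ enters through the transfer-principle derivation of \eqref{localizing-stri}, where the sign of the evolution enters each $V_{\pm_j}^2$ norm consistently with the free propagator used in the Strichartz inequality. The main technical obstacle I anticipate is the careful combinatorial bookkeeping of the joint $(\nu, n)$-indexing: one must verify that the independent angular constraint at scale $\ka^{-1}$ and cube constraint at scale $\mu$ are simultaneously compatible with the $|\xi|\sim\mu$ condition imposed by $P_\mu$, that only $O(1)$ admissible partners arise per index, and that no logarithmic loss is incurred from summing across the two concurrent localizations.
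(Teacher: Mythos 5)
Your argument is correct and follows the same skeleton as the paper's proof: the same simultaneous decomposition into angular caps of width $\ka^{-1}\sim \mu/\lam_1$ and cubes of side $\mu$, the observation that the output frequency $|\xi|\sim\mu$ leaves only $O(1)$ admissible partners $(\nu',n')$ per $(\nu,n)$, H\"older in $L^4_{t,x}\cdot L^4_{t,x}$, Cauchy--Schwarz over the indices, and the localized Strichartz bound \eqref{localizing-stri} with $(q,r)=(4,4)$, $\lam'=\mu$. The one substantive difference is that you never use the spinor structure: the paper additionally invokes the null-form bound $\bigl|\bigl< \Pi_{\pm_1}(\lam_1\xi_\ka^{\nu})v,\beta\Pi_{\pm_2}(\lam_2\xi_\ka^{\nu'})w\bigr>\bigr|\les\ka^{-1}|v||w|$ (Lemma 3.3 of \cite{behe2}), which is precisely where the hypotheses $\psi_{j,\lam_j}=\Pi_{\pm_j}(D)\psi_{j,\lam_j}$ and the matrix $\beta$ enter, and thereby gets the stronger bound $\ka^{-1}\mu^{\frac12}\mu^{\frac12}\sim\mu^2/\lam_1$ before relaxing it to $\mu$. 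Your version, which estimates the $\mathbb{C}^2$ inner product crudely by $|v||w|$, already yields the stated bound $\mu^{\frac12}\cdot\mu^{\frac12}=\mu$, so the proposition as stated is proved; in fact with this bookkeeping the angular decomposition becomes redundant (cubes alone give the bounded-partner counting), and what you give up is only the extra factor $\mu/\lam_1$, which is not needed for Corollary \ref{bilinear2} or for the low-frequency estimate in Section \ref{lowfrequency}. One small correction: the condition $\pm_1=\pm_2$ does not actually enter your argument, since \eqref{localizing-stri} holds for each sign separately; its role in the paper is only that in the opposite-sign case one can do better (the $\mu^{\frac12}$ bound of Proposition \ref{resonance}) by a genuinely bilinear argument, whereas in the equal-sign case the null structure is what would buy any improvement beyond $\mu$.
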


\begin{proof}
We decompose $P_\mu \left< \psi_{1,\lam_1}, \beta\psi_{2,\lam_2} \right>$ with respect to $n,n' \in B_{\mu}$, $\nu,\nu' \in \Omega_\ka$ with $\ka \sim \mu^{-1}\lam_1$. Only those $n' \in B_{\mu}$  with $|n-n'|\le \mu$ are chosen for each $n \in B_\mu$ and we carry out the summation under a support condition $\left|\pm_1 \xi_\ka^\nu - \pm_2 \xi_\ka^{\nu'} \right| = |\xi_1 - \xi_2| \sim \ka^{-1}$. Then we estimate
\begin{align*}
\|P_\mu \left< \psi_{1,\lam_1}, \beta\psi_{2,\lam_2} \right> \| &\les  \sum_{\xi_\ka^\nu , \xi_\ka^{\nu'} \in \Omega_\ka} \sum_{n,n' \in B_\mu} \left\|P_\mu \left< K_{\ka}^\nu \Gam_{\mu,n}\psi_{1,\lam_1}, \beta K_{\ka}^{\nu'} \Gam_{\mu,n} \psi_{2,\lam_2} \right> \right\|\\
&\les  \sum_{\xi_\ka^\nu , \xi_\ka^{\nu'} \in \Omega_\ka} \sum_{n,n' \in B_\mu} \ka^{-1} \left\|K_{\ka}^\nu \Gam_{\mu,n}\psi_{1,\lam_1}\right\|_{L_{t,x}^4}  \left\| K_{\ka}^{\nu'} \Gam_{\mu,n} \psi_{2,\lam_2} \right\|_{L_{t,x}^4}.
\end{align*}
Therefore, using Cauchy-Schwarz inequality with $n,n'$ and $\nu, \nu'$ and \eqref{localizing-stri}, we obtain
$$
\|P_\mu \left< \psi_{1,\lam_1}, \beta\psi_{2,\lam_2} \right> \| \les \mu (\lam_1)^{-1} \mu^{\frac12}\mu^{\frac12}\|\psi_{1,\lam_1}\|_{V_{\pm_1}^2}\|\psi_{2,\lam_2}\|_{V_{\pm_2}^2} \les \mu \|\psi_{1,\lam_1}\|_{V_{\pm_1}^2}\|\psi_{2,\lam_2}\|_{V_{\pm_2}^2}.
$$
\end{proof}

\begin{lem}\label{l2-lem} Let  $\mu,\lam_1,\lam_2 \in 2^{\mathbb{Z}}$ satisfy $\mu \ll \lam_1 \sim \lam_2$ and $\lam_1,\lam_2 \ge 1$.  Assume that $f_{\lam_1},g_{\lam_2} \in L_x^2$. Then
   \begin{align}\label{l2-esti}
	\left\|P_\mu \left( e^{\pm_1it\brad}f_{\lam_1}\cdot e^{\pm_2it\brad}g_{\lam_2} \right) \right\|_{L_x^2} \les \mu^{\frac12}\|f_{\lam_1}\|_{L_x^2} \|g_{\lam_2}\|_{L_x^2}
   \end{align}
 with $\pm_1 = \pm_2$. %Here $\left<\cdot,\cdot \right>_{\mathbb{R}^2}$ denotes the inner product on $\mathbb{R}^2$.
\end{lem}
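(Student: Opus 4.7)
My plan is to mimic the proof of Proposition \ref{non-resonance}, adapted to fixed time and the $L_x^2$ norm.

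First I would perform the angular cap decomposition $K_\ka^\nu$ with $\ka \sim \lam_1 / \mu$ together with the box localization $\Gam_{\mu,n}$ on each of the two free-wave factors. Since $\pm_1 = \pm_2$ and $|\xi|\sim\mu \ll \lam_1$, the same support/transversality analysis as in Proposition \ref{non-resonance} shows that only pairs $(\nu, \nu', n, n')$ satisfying $|\pm_1 \xi_\ka^\nu \pm_2 \xi_\ka^{\nu'}| \les \ka^{-1}$ and $|n + n'| \les \mu$ give nontrivial contribution to $P_\mu$, and for each $(\nu, n)$ there are only $O(1)$ admissible partners.

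Next, for each such pair the Fourier support of $K_\ka^\nu \Gam_{\mu,n} e^{\pm it\brad} f_{\lam_1}$ is contained in a $\mu \times \mu$ box; the 2d Bernstein inequality then gives
$$\|K_\ka^\nu \Gam_{\mu,n} e^{\pm it\brad} f_{\lam_1}\|_{L_x^4} \les \mu^{1/2} \|K_\ka^\nu \Gam_{\mu,n} f_{\lam_1}\|_{L_x^2},$$
replacing the $L_{t,x}^4$-Strichartz step of Proposition \ref{non-resonance} with its fixed-time Bernstein counterpart. Applying H\"older's inequality $L_x^4 \cdot L_x^4 \hookrightarrow L_x^2$ pair by pair, and then Cauchy--Schwarz together with the $L^2$-orthogonality of the cap-box decomposition, would produce the desired estimate.

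The main obstacle is the precise accounting of the $\mu^{1/2}$ exponent. Proposition \ref{non-resonance} obtains its $\mu$ factor via an extra $\ka^{-1}$ gain from the null inner product against $\beta$, which is unavailable for the plain product $fg$; on the other hand, since we work at fixed time, the direct route via Bernstein $L^1\to L^2$ together with Cauchy-Schwarz already yields the bound $\mu \|f_{\lam_1}\|_{L^2} \|g_{\lam_2}\|_{L^2}$, which suffices in the low-frequency regime $\mu \le 1$ that is used in Section \ref{lowfrequency} since then $\mu \le \mu^{1/2}$. To obtain the genuine $\mu^{1/2}$ saving uniformly in the regime $\mu \ll \lam_1$, one must combine the output-frequency localization to a set of area $\sim \mu^2$ (forced by $|\xi|\sim\mu$ and the cap transversality) with the $O(1)$-multiplicity of the partner pairs and the cap-box Plancherel orthogonality, carefully tracking exponents through the bilinear Cauchy-Schwarz.
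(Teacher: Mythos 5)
There is a genuine gap, and it is the central one: you have read \eqref{l2-esti} as a fixed-time estimate, whereas in the paper it is a \emph{space-time} bilinear estimate --- the norm on the left is $\|\cdot\|=\|\cdot\|_{L^2_{t,x}}$ in the paper's notation, as both the paper's proof (which applies ``Plancherel's theorem with respect to $(t,x)$'') and the way the lemma is used in Proposition~\ref{resonance} make clear: estimate \eqref{n-linear} needs $\|\omega_\mu*(e^{-it\brad}f\cdot e^{-it\brad}f)\|_{L^2_{t,x}}\les \mu^{\frac12}\|f\|_{L_x^2}^2$ in order to get $\|n(e^{-it\brad}f)\|_{L^4_t}\les\mu^{\frac14}\|f\|_{L_x^2}$; a bound at each fixed time, uniform in $t$, gives no $L^4_t$ integrability at all, so the subsequent $U^4$-atomic argument collapses. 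Moreover, the fixed-time version of the inequality is not a meaningful target: at a fixed time the free evolutions are just arbitrary $L^2$ functions with the given frequency localization, for which the sharp constant in this configuration is $\sim\mu$ (Cauchy--Schwarz on the Fourier side gives $\mu$, and taking $\widehat f,\widehat g$ to be indicators of two nearly antipodal $\mu$-boxes on the circle of radius $\lam_1$ saturates it), so the claimed $\mu^{\frac12}$ is false for $\mu>1$ and weaker than the trivial bound for $\mu\le1$. Your own accounting reflects this: Bernstein on $\mu$-boxes plus H\"older and cap/box orthogonality yields only the factor $\mu$, and the closing paragraph's hope of recovering a genuine $\mu^{\frac12}$ saving ``at fixed time by carefully tracking exponents'' cannot be realized --- there is no fixed-time mechanism that produces it.

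The missing idea is precisely the one the paper uses: the gain comes from the time oscillation. After the box decomposition $\Gam_{\mu,n_1},\Gam_{\mu,n_2}$ with $|n_1+n_2|\sim\mu$, the product of the two free waves is a Fourier integral with phase $t(\left<\xi\right>+\left<\eta\right>)+x\cdot(\xi+\eta)$; one changes variables $(\xi_i,\eta)\mapsto(\left<\xi\right>+\left<\eta\right>,\xi+\eta)$, whose Jacobian is $\sim1$ exactly because the two frequencies are nearly antipodal and of size $\lam_1\sim\lam_2\ge1$ while $\pm_1=\pm_2$ (the gradients of $\left<\xi\right>$ and $\left<\eta\right>$ add rather than cancel, i.e.\ the two characteristic surfaces $\tau=\left<\xi\right>$ are transverse in this regime). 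Plancherel in $(t,x)$ then converts the $L^2_{t,x}$ norm into an $L^2$ norm in the new variables, and the factor $\mu^{\frac12}$ arises from Minkowski/Cauchy--Schwarz in the single leftover frequency variable, which ranges over an interval of length $\sim\mu$ coming from the box localization; the sum over boxes is handled by almost orthogonality as you proposed. So while your reduction to antipodal caps/boxes with $O(1)$ multiplicity is consistent with the paper's setup (and with Proposition~\ref{non-resonance}), discarding the time integration removes the very mechanism that Lemma~\ref{l2-lem} encodes, and the estimate you would actually prove cannot serve as a substitute in the proof of Proposition~\ref{resonance}.
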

\begin{proof}
We assume that $\pm_1 = \pm_2 = +$. Then
	\begin{align*}
	\left\|P_{\mu} \left( e^{it\brad} f_{\lam_1}\cdot e^{it\brad} g_{\lam_2} \right) \right\| \les \sum_{\begin{subarray} ||n_i| \sim \lam_i \\ |n_1 + n_2|  \sim \mu\end{subarray}} \|I_{n_1,n_2}\|,
	\end{align*}
	where
	$$
	I_{n_1,n_2}(t,x) = \left( \Gam_{\mu,n_1}e^{it\brad} f_{\lam_1}\cdot \Gam_{\mu,n_2}e^{it\brad} g_{\lam_2} \right).
	$$
	Then $I_{n_1,n_2}$ is written as
	\begin{align*}
	I_{n_1,n_2}(t,x) = I_{n_1,n_2}^1 (t,x) +  I_{n_1,n_2}^2 (t,x),
	\end{align*}
	where
	\begin{align*}
	I_{n_1,n_2}^i = \int_{A_i} \int e^{i(t,x)\cdot(\left< \eta \right>+\left< \xi \right>,\, \xi+\eta)}  \left(\rho_{\lam_1}(\xi) \gam_{\mu,n_1}(\xi)\widehat{f}(\xi)  \cdot \rho_{\lam_2}(\eta) \gam_{\mu,n_2}(\eta)\widehat{g}(\xi)  \right) d\eta d\xi \;\;\;\; (i=1,2).
 	\end{align*}
 	Here we denote $A_i$ by
 	$$
 	A_i \subset \{  \xi = (\xi_1 , \xi_2)\in \mathbb{R}^2 : |\xi| \sim |\xi_i|  \}\;\mbox{for}\;i=1,2 \;\;\;\mbox{and}\;\;\; A_1\cup A_2 = \mathbb{R}^2.
 	$$
 	We make the change of variables $(\xi_i, \eta) \rightarrow (\left< \eta \right>+\left< \xi \right>,\xi+\eta)= \zeta = (\zeta_1,\zeta_2,\zeta_3)$ with
 	\begin{align}\label{yacobi}
 	d\xi_i d\eta = \left|\frac{\partial(\zeta_1,\zeta_2,\zeta_3)}{\partial(\eta_1,\eta_2,\xi_i)} \right|^{-1}d\zeta.
 	\end{align}
 	Since $|n_1 + n_2 | \sim \mu$, $\left|\frac{\partial(\zeta_1,\zeta_2,\zeta_3)}{\partial(\eta_1,\eta_2,\xi_i)} \right| \sim 1$. Now we fix $i=1$. The other cases can be estimated similarly. We may estimate by Minkowski's inequality
 	\begin{align*}
 	\|I_{n_1,n_2}^1\| \les \int_{|\xi_2| \les \mu} \left\| \iint e^{i(t,x)\cdot(\left< \eta \right>+\left< \xi \right>,\xi+\eta)}  \left(  \rho_{\lam_1}(\xi) \gam_{\mu,n_1}(\xi)\widehat{f}(\xi)  \cdot \rho_{\lam_2}(\eta) \gam_{\mu,n_2}(\eta)\widehat{g}(\xi)  \right)   d\eta d\xi_1\right\| d\xi_2.
 	\end{align*}
 	Using Plancherel's theorem with respect to  $(t,x)$, the change of variables $\zeta \to (\xi_1,\eta)$, and \eqref{yacobi}, we get
 	$$
 	\|I_{n_1,n_2}^1\| \les \int_{|\xi_2| \les \mu} \left\|  \left( \rho_{\lam_1}(\xi) \gam_{\mu,n_1}(\xi)\widehat{f}(\xi)  \cdot \rho_{\lam_2}(\eta) \gam_{\mu,n_2}(\eta)\widehat{g}(\xi)  \right)  \right\|_{L_{\eta,\xi_1}^2} d\xi_2.
 	$$
 	And also we have
 	\begin{align*}
 	\|I_{n_1,n_2}^2\| \les \mu^{\frac12} \left\|\left( \rho_{\lam_1}(\xi) \gam_{\mu,n_1}(\xi)\widehat{f}(\xi)  \cdot \rho_{\lam_2}(\eta) \gam_{\mu,n_2}(\eta)\widehat{g}(\xi)  \right)  \right \|_{L_{\eta,\xi}^2} \les \mu^{\frac12} \|\Gam_{\mu,n_1} f_{\lam_1}\|_{L_x^2} \|\Gam_{\mu,n_2} g_{\lam_2}\|_{L_x^2}.
 	\end{align*}
 	These estimates yield that
 	$$
 	\left\|P_{\mu} \left( e^{it\brad} f_{\lam_1}\cdot e^{it\brad} g_{\lam_2} \right) \right\| \les \sum_{\begin{subarray} ||n_i| \sim \lam_i \\ |n_1 + n_2|  \sim \mu\end{subarray}}  \mu^{\frac12} \|\Gam_{\mu,n_1} f_{\lam_1}\|_{L_x^2} \|\Gam_{\mu,n_2} g_{\lam_2}\|_{L_x^2} \les \mu^{\frac12} \|f_{\lam_1}\|_{L_x^2} \|g_{\lam_2}\|_{L_x^2}.
 	$$
 	This completes the proof of Lemma \ref{l2-lem}.
 \end{proof}

\begin{prop}\label{resonance} Let  $\mu,\lam_1,\lam_2 \in 2^{\mathbb{Z}}$ satisfy $\mu \ll \lam_1 \sim \lam_2$ and $\lam_1,\lam_2 \ge 1$. Suppose $\pm_1 \neq \pm_2$.  Assume that $\psi_{j,\lam_j} = \Pi_{\pm_j}(D)\psi_{j,\lam_j} \in V_{\pm_j}^2$ for $j = 1,2$. Then
	\begin{align}\label{eq-bilinear2}
	\left\|P_\mu \left< \psi_{1,\lam_1}, \psi_{2,\lam_2} \right> \right\| \les \mu^{\frac12}\|\psi_{1,\lam_1}\|_{V_{\pm_1}^2} \|\psi_{2,\lam_2}\|_{V_{\pm_2}^2}.
	\end{align}
\end{prop}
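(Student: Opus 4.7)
The plan is to adapt the angular-spatial decomposition of Proposition \ref{non-resonance} to the opposite-sign (resonance) case, but to replace the $L^4$-based Strichartz input by the bilinear $L^2$ estimate of Lemma \ref{l2-lem} (via a complex-conjugation trick), which sharpens the constant from $\mu$ to $\mu^{1/2}$. I would start by decomposing $\psi_{j,\lam_j}=\sum_{\nu_j,n_j} K_\ka^{\nu_j}\Gam_{\mu,n_j}\psi_{j,\lam_j}$ with $\ka\sim\lam_1/\mu$; only pairs with $|n_1-n_2|\les\mu$ and $|\xi_\ka^{\nu_1}-\xi_\ka^{\nu_2}|\les\ka^{-1}$ contribute to $P_\mu$, with $O(1)$ multiplicity per fixed pair. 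To extract the symbol gain from the opposite-sign projections, I would use the algebraic identity
\[
\Pi_+(\eta)\Pi_-(\sigma)=\tfrac12\bigl(D(\eta)-D(\sigma)\bigr)\Pi_-(\sigma),\qquad D(\xi):=\brad^{-1}(\al\cdot\xi+\beta),
\]
together with the Lipschitz bound $|D(\eta)-D(\sigma)|\les\ka^{-1}$ for $\eta, \sigma$ in adjacent $\ka^{-1}$-caps at frequency scale $\lam_1$, yielding the pointwise symbol bound $|\left<\Pi_+(\lam_1\xi_\ka^\nu)v,\Pi_-(\lam_2\xi_\ka^{\nu'})w\right>|\les\ka^{-1}|v||w|$---the analog of Lemma 3.3 of \cite{behe2} for opposite signs without $\beta$.

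Second, to obtain the sharper $\mu^{1/2}$ rather than the $\mu$ of Proposition \ref{non-resonance}, I would invoke Lemma \ref{l2-lem} in place of an $L^4\times L^4\to L^2$ Hölder step. The conjugation identity $\overline{e^{-it\brad}f}=e^{it\brad}\bar f$ transforms the inner product $\overline{(\psi_{1,\lam_1})_i}(\psi_{2,\lam_2})_i$ componentwise into a same-sign product of $e^{+it\brad}$-solutions (since $\psi_{1,\lam_1}\in V^2_+$ gives $\overline{\psi_{1,\lam_1}}\in V^2_-$, which matches $\psi_{2,\lam_2}\in V^2_-$). Lemma \ref{l2-lem} then applies and produces an $L^2_{t,x}$-bilinear bound with the factor $\mu^{1/2}$ at the level of free solutions, and Proposition \ref{transfer} lifts this to the adapted spaces as
\[
\|P_\mu\left<u_1,u_2\right>\|\les\mu^{1/2}\|u_1\|_{U^2_{\pm_1}}\|u_2\|_{U^2_{\pm_2}}.
\]

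The main obstacle I anticipate is bridging from this $U^2$-bilinear bound to the $V^2$-bilinear bound claimed in the proposition, since $V^2\not\hookrightarrow U^2$. I would resolve this by the logarithmic interpolation Lemma \ref{log}: write each $\psi_{j,\lam_j}=u_j+w_j$ with $\|u_j\|_{U^2_{\pm_j}}\les M\|\psi_{j,\lam_j}\|_{V^2_{\pm_j}}$ and $\|w_j\|_{U^p_{\pm_j}}\les e^{-M}\|\psi_{j,\lam_j}\|_{V^2_{\pm_j}}$ for some fixed $p>2$ and any $M\ge 1$. The pure $(u_1,u_2)$-piece is then controlled by the $U^2$-bilinear estimate just obtained, while the mixed and $(w_1,w_2)$ pieces are controlled by the cruder $U^4\times U^4\to L^2$ estimate that arises from the $L^4$ Strichartz \eqref{localizing-stri} together with the symbol gain $\ka^{-1}$ (which gives a bilinear constant of order $\mu$, as in Proposition \ref{non-resonance}, and is compatible with $U^p\hookrightarrow U^4$ for $p\in(2,4]$). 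Optimizing in $M\sim\log(\lam_1/\mu)$ absorbs the logarithmic loss and yields the claimed $\|P_\mu\left<\psi_{1,\lam_1},\psi_{2,\lam_2}\right>\|\les\mu^{1/2}\|\psi_{1,\lam_1}\|_{V^2_{\pm_1}}\|\psi_{2,\lam_2}\|_{V^2_{\pm_2}}$.
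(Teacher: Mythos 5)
Your reduction of the opposite-sign pairing to a same-sign product via conjugation is exactly the paper's first step, and the $U^2$-level bound $\|P_\mu\langle u_1,u_2\rangle\|\les\mu^{1/2}\|u_1\|_{U^2_{\pm_1}}\|u_2\|_{U^2_{\pm_2}}$ via Lemma \ref{l2-lem} and Proposition \ref{transfer} is sound. The genuine gap is precisely at the point you flag yourself: the upgrade from $U^2$ to $V^2$. Lemma \ref{log} does not ``absorb'' the logarithmic loss. Writing $\psi_j=u_j+w_j$ with $\|u_j\|_{U^2}\les M\|\psi_j\|_{V^2}$ and $\|w_j\|_{U^p}\les e^{-M}\|\psi_j\|_{V^2}$, the good piece contributes $\mu^{1/2}M^2$, so any choice $M\sim\log(\lam_1/\mu)$ leaves a factor $\log^2(\lam_1/\mu)$ that cannot be removed, because the target bound $\mu^{1/2}$ has no spare negative power of $\mu$ or $\lam$ to pay for it (contrast the modulation estimates in Section 5, where the $U^2$ bound carries a $\lam_3^{-\varepsilon}$ slack). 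Taking $M=O(1)$ instead only works if your crude piece is already bounded by $\mu^{1/2}$; your crude constant is $\sim\mu$ (or $\mu^2/\lam_1$), which beats $\mu^{1/2}$ only when $\mu\le1$, whereas the proposition is stated for all $\mu\ll\lam_1\sim\lam_2$. Moreover, the crude piece itself is not justified as written: the cap/cube square-function Strichartz \eqref{localizing-stri} is proved with $V^2_\pm$ on the right (its proof uses $\ell^2$-orthogonality in $V^2$), and you would be applying it to the remainders $w_j$, which lie only in $U^p_{\pm}$ with $p>2$; since $U^p\not\hookrightarrow V^2$, that step needs a separate argument. (Your symbol identity $\Pi_+(\eta)\Pi_-(\sigma)=\tfrac12(D(\eta)-D(\sigma))\Pi_-(\sigma)$ and the resulting $\ka^{-1}$ gain for the no-$\beta$, opposite-sign pairing are correct, but they do not rescue the two issues above.)

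The paper closes this gap by a different mechanism that avoids interpolation entirely. After the same conjugation reduction, it polarizes: with $\psi_p=\psi_{1,\lam_1}+\psi_{2,\lam_2}$, $\psi_m=\psi_{1,\lam_1}-\psi_{2,\lam_2}$ and real/imaginary parts, the pairing is a combination of \emph{squares of real-valued} functions, and it suffices to prove \eqref{claim-bilinear} for such squares. For a square, the quantity $n(f)$ built from the nonnegative kernel $\omega_\mu=[\mathcal F^{-1}\rho(\cdot/2\mu)]^2$ is subadditive, so the free-solution bound coming from Lemma \ref{l2-lem} extends directly to $U^4$-atoms, hence to $U^4_+$, and then $V^2_+\hookrightarrow U^4_+$ gives the clean $\mu^{1/2}$ bound for $V^2$ inputs with no logarithm and for the whole range $\mu\ll\lam_1$. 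If you want to salvage your route, you would need either an additional $\mu^{-\varepsilon}$ or $\lam^{-\varepsilon}$ gain in one of the two ingredients to absorb the $\log^2$ factor, or an argument of the paper's type that bypasses the $U^2$/$V^2$ mismatch; as written, the proposal proves at best $\mu^{1/2}\log^2(\lam_1/\mu)$ in general, and only for $\mu\le1$ (modulo the $U^p$ orthogonality issue) the stated bound.
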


\begin{proof}
	Since $\epm \psi = \overline{\emp \overline{\psi}}$, $\|\psi\|_{V_{\pm}^2} = \|\overline{\psi}\|_{V_{\mp}^2}$. Hence it suffices to prove that
	\begin{align}\label{realproduct}
	\left\|P_\mu \left( \psi_{1,\lam_1} \cdot \psi_{2,\lam_2} \right) \right\| \les \mu^{\frac12}\|\psi_{1,\lam_1}\|_{V_{\pm_1}^2} \|\psi_{2,\lam_2}\|_{V_{\pm_2}^2}.
	\end{align}
	for $\pm_1 = \pm_2 =+$.	We first show that  for any $\mu \les \lam_1$
	\begin{align}\label{claim-bilinear}
	\|P_{\le\mu} \left( P_{\sim \lam_1}\psi \cdot P_{\sim \lam_1}\psi \right)\| \les \mu^{\frac12} \|\psi\|_{V_{+}^2}^2
	\end{align}
	for real-valued function $\psi \in V_+^2$. To this end let $\omega_\mu = \left[\mathcal{F}^{-1}\left(\rho\left(\frac{\cdot}{2\mu}\right)\right)\right]^2$. Then
	$$
	\|P_{\le\mu}\left( P_{\sim \lam_1}\psi \cdot P_{\sim \lam_1}\psi\right)\| \les \|\omega_\mu*\left( P_{\sim \lam_1}\psi \cdot P_{\sim \lam_1}\psi\right)\| .
	$$
	Hence it suffices to show that
	\begin{align*}
	\|\omega_\mu * \left( P_{\sim \lam_1}\psi \cdot P_{\sim \lam_1}\psi\right)\| \les \mu^{\frac12}\|\psi\|_{V_{+}^2}^2.
	\end{align*}
	 We define
	$$
	n(f) := \| \omega_\mu* \left( P_{\sim \lam_1}f \cdot P_{\sim \lam_1}f\right) \|_{L_x^2}^2.
	$$
	Then we get $n(f+g) \le n(f) + n(g)$. Using \eqref{l2-esti} we see that
	\begin{align}\label{n-linear}
	\|n(e^{-it\left<D\right>}f)\|_{L_t^4} \les \left\|\omega_\mu* \left( e^{-it\left<D\right>}f \cdot e^{-it\left<D\right>}f \right) \right\|_{L_{t,x}^2}^{\frac12} \les \mu^{\frac14} \|f\|_{L_x^2}
	\end{align}
	for all $f\in L_x^2$. Let $\psi \in U_{+}^4$ be with atomic decomposition
	$$
	e^{it\brad}\psi = \sum c_ja_j,\;\; \sum|c_j| \le 2 \|\psi\|_{U_{+}^4},
	$$
	where $a_j$ are $U^4$-atoms. Then we get
	\begin{align}
	\|n(\psi)\|_{L_t^4} \le \sum_{j} |c_j|\|n(e^{-it\brad}a_j)\|_{L_t^4} \les \mu^{\frac14}\|\psi\|_{U_{+}^4},
	\end{align}
	provided that for any $U^4$-atom $a$ the estimate
	$$
	\|n(e^{-it\brad}a)\|_{L_t^4} \les \mu^{\frac14}
	$$
	holds true. Indeed, let $a(t) = \sum_{k}\chi_{I_k}(t) \phi_k $, for some time partition $I_k \subset \mathbb{R}$ and $\phi_k \in L_x^2$ satisfying $\sum_{k}\|\phi_k\|^4\le 1$. Then by \eqref{n-linear} we obtain
	\begin{align*}
	\|n(e^{-it\brad} a)\|_{L_t^4} &\le \left\|\sum_{k}\chi_{I_k}(t) n(e^{-it\left<D\right>}\phi_k)\right\|_{L_t^4} \le \left(\sum_{k}\left\| n(e^{-it\left<D\right>}\phi_k)\right\|_{L_t^4}^4\right)^{\frac14}\\
	&\les \mu^{\frac14} \left(\sum_{\mu}\left\| \phi_\mu\right\|_{L_x^2}^4\right)^{\frac14} \les \mu^{\frac14}
	\end{align*}
	and hence
	$$
	\|\omega_\mu * \left( P_{\sim \lam_1}\psi \cdot P_{\sim \lam_1}\psi \right)\| = \|n(\psi(t))\|_{L_t^4}^2 \les \mu^{\frac12}\|\psi\|_{U_{+}^4}^2 \les \mu^{\frac12}\|\psi\|_{V_{+}^2}^2.
	$$
	This leads us to \eqref{claim-bilinear}.
	
	We now consider  \eqref{realproduct} for $\psi_{j,\lam_j} \in V_{+}^2$ satisfying $\|\psi_{j,\lam_j}\|_{V_{+}^2}=1$ for $j=1,2$. Let us define the functions $\psi_p, \, \psi_m$ by $\psi_{p} := \psi_{1,\lam_1} + \psi_{2,\lam_2}$ $\psi_{m} := \psi_{1,\lam_1} - \psi_{2,\lam_2}$. In addition we readily get $P_{\sim \lam_1} \psi_{p} =  \psi_{p},\; P_{\sim \lam_1} \psi_{m} =  \psi_{m}$ and $ \|\psi_{p}\|_{V_{+}^2} \les 1, \; \|\psi_{m}\|_{V_{+}^2}\les 1$. For $c\in \mathbb{C}^2$ let ${\rm Re} \,c$ and ${\rm Im} \,c$ denote $\left( \begin{aligned}[l]  {\rm Re} \,c_1 \\ {\rm Re}\, c_2 \end{aligned} \right)$ and $\left( \begin{aligned}[l]  {\rm Im} \,c_1 \\ {\rm Im}\, c_2 \end{aligned} \right)$, respectively. Then we see that
	\begin{align*}
	{\rm Re}\,(\left< \psi_{1,\lam_1}, \psi_{2,\lam_2} \right>) &= \frac14 \Big(\left< {\rm Re}\, \psi_p , {\rm Re}\,\psi_p \right> - \left<{\rm Re}\, \psi_m , {\rm Re}\, \psi_m \right> + \left< {\rm Im}\, \psi_p , {\rm Im}\, \psi_p \right> - \left< {\rm Im} \,\psi_m , {\rm Im} \,\psi_m \right>\Big),\\
	{\rm Im}\, (\left< \psi_{1,\lam_1}, \psi_{2,\lam_2} \right>) &= { \rm Re}\,(-i\left< \psi_{1,\lam_1}, \psi_{2,\lam_2} \right>)
	\end{align*}
	which implies
	\begin{align}\label{appen1}
	\|P_{\le\mu} \left< \psi_{1,\lam_1}, \psi_{2,\lam_2} \right>\| \les \|P_{\le\mu} {\rm Re}\left< \psi_{1,\lam_1}, \psi_{2,\lam_2} \right>\| + \|P_{\le\mu} {\rm Re} \left< -i\psi_{1,\lam_1}, \psi_{2,\lam_2} \right>\|.
	\end{align}
	Since $P_{\sim \lam_1} ({\rm Re}\,\psi_p )=  \psi_p $ and $P_{\sim \lam_1} ({\rm Im}\,\psi_\pm) = P_{\sim \lam_1}\psi_\pm $, we obtain
	\begin{align*}
	\|P_{\le\mu} {\rm Re} \,\left< \psi_{1,\lam_1}, \psi_{2,\lam_2} \right>\| &\les \|P_{\le\mu}\left< {\rm Re} \,\psi_p, {\rm Re} \psi_p \right>\| + \|P_{\le\mu}\left< {\rm Re}\, \psi_m, {\rm Re}\, \psi_m \right>\|\\
	&\qquad\qquad + \|P_{\le\mu}\left< {\rm Im}\,\psi_p, {\rm Im}\psi_p \right>\| + \|P_{\le\mu}\left< {\rm Im}\,\psi_m, {\rm Im}\,\psi_m \right>\|\\
	&\les \mu^{\frac12} \left( \| {\rm Re}\, \psi_p\|_{V_{\pm}^2}^2  + \|{\rm Re}\,  \psi_m\|_{V_{\pm}^2}^2 + \|{\rm Im}\,\psi_p\|_{V_{\pm}^2}^2 + \|{\rm Im}\,\psi_m\|_{V_{\pm}^2}^2    \right)\\
	&\les \mu^{\frac12} \left( \|\psi_p\|_{V_{\pm}^2}^2  + \|\psi_m\|_{V_{\pm}^2}^2 \right) \les \mu^{\frac12}.
	\end{align*}
	where in second inequality we used the estimate \eqref{claim-bilinear}. The second term of right-hand side of \eqref{appen1} is obtained similarly as above.
		Therefore we get
	$$
		\left\|P_\mu \left< \psi_{1,\lam_1}, \psi_{2,\lam_2} \right> \right\| \les \mu^{\frac12}\|\psi_{1,\lam_1}\|_{V_{+}^2} \|\psi_{2,\lam_2}\|_{V_{+}^2}
	$$
	for any $\psi_{j,\lam_j}\in V_+^2$. This completes the proof of \eqref{eq-bilinear2}.
\end{proof}

By Propositions \ref{non-resonance} and \ref{resonance} we reach the following corollary.
\begin{cor}\label{bilinear2} Let  $\mu,\lam_1,\lam_2 \in 2^{\mathbb{Z}}$ satisfy $\mu \ll \lam_1 \sim \lam_2$, $0<\mu \le 1$, and $\lam_1,\lam_2 \ge 1$.  Assume that $\psi_{j,\lam_j} = \Pi_{\pm_j}(D)\psi_{j,\lam_j} \in V_{\pm_j}^2$ for $j = 1,2$. Then
	\begin{align}
	\left\|P_\mu \left< \psi_{1,\lam_1}, \beta\psi_{2,\lam_2} \right> \right\| \les \mu^{\frac12}\|\psi_{1,\lam_1}\|_{V_{\pm_1}^2} \|\psi_{2,\lam_2}\|_{V_{\pm_2}^2}.
	\end{align}
\end{cor}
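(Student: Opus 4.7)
My plan is to split the argument into two cases determined by whether $\pm_1 = \pm_2$ or $\pm_1 \neq \pm_2$, and invoke the matching proposition in each case.

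In the coincident-sign case $\pm_1 = \pm_2$, Proposition \ref{non-resonance} applies directly: it already carries the matrix $\beta$ inside the inner product and yields the stronger bound $\lesssim \mu \|\psi_{1,\lam_1}\|_{V_{\pm_1}^2}\|\psi_{2,\lam_2}\|_{V_{\pm_2}^2}$. Since the hypothesis $0 < \mu \le 1$ forces $\mu \le \mu^{\frac12}$, the desired inequality follows at once.

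In the opposite-sign case $\pm_1 \neq \pm_2$, Proposition \ref{resonance} supplies only the untwisted version of the estimate, so I must accommodate the matrix $\beta$. My plan is to exploit the diagonal structure $\beta = \mathrm{diag}(1, -1)$, which gives
\[
\left<\psi_{1,\lam_1}, \beta \psi_{2,\lam_2}\right> = \overline{(\psi_{1,\lam_1})_1}(\psi_{2,\lam_2})_1 - \overline{(\psi_{1,\lam_1})_2}(\psi_{2,\lam_2})_2.
\]
Each component $(\psi_{j,\lam_j})_k$ is a scalar function at frequency $\lam_j$ with $\|(\psi_{j,\lam_j})_k\|_{V_{\pm_j}^2} \le \|\psi_{j,\lam_j}\|_{V_{\pm_j}^2}$. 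The proof of Proposition \ref{resonance} is in fact driven by the scalar bilinear inequality $\|P_\mu (fg)\| \les \mu^{\frac12} \|f\|_{V_{\pm_1}^2} \|g\|_{V_{\pm_2}^2}$ for suitably localized scalars $f, g$ with $\pm_1 \neq \pm_2$: the spinor inner product in that proposition is just a sum of two such scalar products, and all the key steps (complex conjugation, polarization via $\psi_p, \psi_m$, and Lemma \ref{l2-lem}) are scalar in nature and do not rely on the spinor projection hypothesis. Applying this scalar estimate to each of the two component products above and summing yields the claimed bound.

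The main, though minor, obstacle is verifying that the scalar bilinear inequality implicit in the proof of Proposition \ref{resonance} legitimately carries over to the $\beta$-twisted inner product. Since $\beta$ only changes signs in the component-wise decomposition and the component $V_\pm^2$ norms are controlled by the spinor norms, no additional analytic difficulty arises. Combining the two cases then delivers Corollary \ref{bilinear2}.
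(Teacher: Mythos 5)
Your proof is correct and is essentially the paper's own argument: the paper obtains the corollary by combining Proposition \ref{non-resonance} for the equal-sign case (where $\mu\le\mu^{\frac12}$ because $0<\mu\le1$) with Proposition \ref{resonance} for the opposite-sign case, and your componentwise treatment of $\beta$ — which only flips the sign of the second component product, while the $V^2$ norms of components are controlled by the spinor norms — is exactly the bookkeeping the paper leaves implicit when it cites the two propositions. One small caution on wording: the scalar estimate driving Proposition \ref{resonance} is for conjugated products $\overline{f}\,g$ with opposite adapted signs (equivalently, plain products with equal signs, as in \eqref{realproduct} and \eqref{claim-bilinear}), not for plain products $fg$ with opposite signs, which is the resonant configuration; your actual application to $\overline{(\psi_{1,\lam_1})_k}(\psi_{2,\lam_2})_k$ uses it in the correct form.
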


%%%%%%%%%%%%%%%%%%%%%%%%%%%%%%%%%%%%%%%%%%%%%%%%%%%%%%%%%%%%%%%%%%%%%%%%%%%%%%%%%%%%%%%%%%%%%%%%%%%%%%%%%%%%%%%%%%%%%%%%%%%%%%%%%%%%%%%%%%%%%%%%%%%%%%%%%%%%%%%%%%%%%%%%%%%%%%%%%%%%%%%%%%%%%%%%%%%%%%%%%%%%%%%%%%%%%%%%%%%%%%%%%%%%%%%%%%%%%%%%%%%%%%%%%%%%%%%%%%%%%%%%%%%%%%%%%%%%%%%%%%%%%%%%%%%%%%%%%%%%%%%%%%%%%%%%%%%%%%%%%%%%%%%%%%%%%%%%%%%%%%%%%%%%%%%%%%%%%%%%%%%%%%%%%%%%%%%%%%%%%%%%%%%%%%
\section{Proof of Theorem \ref{mainthm}}

\newcommand{\x}{X_{\pm}^s}

%\begin{prop}
%Let $1\le \gam < 2$ and $m>0$. Then, for $s >\gam-1$, there exits $\ve >0$ such that for all $\psi$ satisfy that $\left\|\Pi_{\pm}(D)\psi_0\right\|_{H^s} < \ve$, the \eqref{maineq-decom} is globally well-posed in $H^s$. In particular there exist $\phi_\pm,\; \varphi_\pm \in H^s$ such that
%\begin{align}\label{scatter}
%\|\Pi_{\pm}(D)\psi(t) - \emp \phi_\pm\|_{H^s}&\xrightarrow{t \to \infty} 0 \;\;\;\;\mbox{and}\;\;\;\; \|\Pi_{\pm}(D)\psi(t) - \emp \varphi_\pm\|_{H^s}\xrightarrow{t \to -\infty} 0.
%\end{align}
%\end{prop}

%In this section, by symmetry, we may consider about a positive time and prove Theorem \ref{mainthm} by contraction argument.
In this section we prove Theorem \ref{mainthm} by contraction argument. At first we denote time-restricted space $\left\{ \psi = \chi_{\left[0, \infty\right)} \phi : \phi \in U_{\pm}^2\right\}$ by $U_{\pm}^2(\left[0, \infty\right))$. And let us define Banach spaces $\x, X^s$ as
$$
\x := \left\{ \psi \in C(\mathbb{R} ; L_x^2)\cap U_{\pm}^2(\left[0, \infty\right)) : \|\psi\|_{\x} := \left( \sum_{\lam\ge1} \lam^{2s}\|P_\lam^1 \psi\|_{U_{\pm}^2}^2\right)^{\frac12} < \infty \right\},
$$
%Since  $\psi = \psi_+ + \psi_-$ where $\psi_{\pm} = \Pi_{\pm}(D)\psi$,
 \begin{align*}
X^s := \left\{ \psi : \psi_\pm : =\Pi_{\pm}(D)\psi \in \x  \;\;\mbox{and}\;\;  \|\psi\|_{X^s}:= \|\psi_+\|_{X_+^s}  +  \|\psi_-\|_{X_-^s} < \infty   \right\}.
\end{align*}
Let us consider a complete metric space $(X^s(\delta), d)$ and a map $\mathcal{H}$ on $X^s(\de)$ given by
 \begin{align*}
 X^s(\de) = \left\{ \psi \in X^s : \|\psi\|_{X^s} \le \de  \right\}  \;\;\mbox{with metric}\;\;  d(\psi, \phi) := \|\psi - \phi \|_{X^s},
 \end{align*}
\begin{align}\label{mapping}
\mathcal{H}(\psi) &=  \sum_{\pm_0 \in \{ \pm\}}e^{\mp_0 it\brad}\Pi_{\pm_0}(D)\psi_0 + i \sum_{\pm_j \in \{\pm\}} \mathcal{N}_{\pm_0}(\psi_{\pm_1},\psi_{\pm_2},\psi_{\pm_3})(t),
\end{align}
where
\begin{align*}
\mathcal{N}_{\pm_0}(\psi_{\pm_1},\psi_{\pm_2},\psi_{\pm_3})(t) = \int_0^t e^{\mp_0 i(t-t')\brad}\Pi_{\pm_0}(D)[(V*\left< \psi_{\pm_1}, \beta\psi_{\pm_2}\right>)\beta\psi_{\pm_3}] d\,t'.
\end{align*}
Then we will show that $\mathcal{H}$ is a contraction mapping on $X^s(\de)$.
%By \eqref{inteq0} we get
%\begin{align}\label{inteq}
%\psi_\pm(t) &= e^{\mp it\brad}\Pi_{\pm}(D)\psi_0 + i \sum_{\pm_j \in \{\pm\}} \mathcal{N}_{\pm}(\psi_{\pm_1},\psi_{\pm_2},\psi_{\pm_3})(t)
%\end{align}
Indeed, the linear part of \eqref{mapping} can be handled as follows:
\begin{align}\label{linear-est}
\left\|e^{\mp_0 it\brad} \Pi_{\pm_0}(D) \psi_0 \right\|_{X_{\pm_0}^s}^2 = \sum_{\lam \ge 1} \lambda^{2s}\left\|\chi_{[0,\infty)} P_\lam^1 \Pi_{\pm_0}(D) \psi_0 \right\|_{U_{\pm_0}^2}^2 \le C\|\psi_0\|_{H^s}^2.
\end{align}
For the nonlinear parts $\mathcal{N}_{\pm_0}(\psi_{\pm_1},\psi_{\pm_2},\psi_{\pm_3})$ we will show the following trilinear estimates.
\begin{prop}\label{nonlinear}
	Let $\psi_{j,\pm_j} \in X_{\pm_j}^s$ for $s > \gamma-1$. Then we have
	\begin{align*}
	\|\mathcal{N}_{\pm_0}(\psi_{1,\pm_1},\psi_{2,\pm_2},\psi_{3,\pm_3})\|_{X_{\pm_0}^s} \les \prod_{j=1}^3\|\psi_{j,\pm_j}\|_{X_{\pm_j}^s}.
	\end{align*}
\end{prop}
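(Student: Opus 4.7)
The plan is to combine the $U^2_\pm$–$V^2_\pm$ duality of Lemma \ref{duality} with a Littlewood–Paley decomposition of all inputs as well as the potential $V$. By duality, it suffices to estimate, for each dyadic output scale $\lam_0 \ge 1$ and each test function $w \in V^2_{\pm_0}$ with $\|w\|_{V^2_{\pm_0}}\le 1$, the pairing
\begin{align*}
\mathcal I := \iint \big(V * \langle \psi_{1,\pm_1}, \beta\psi_{2,\pm_2}\rangle\big)\, \langle \beta\psi_{3,\pm_3}, P^1_{\lam_0} w\rangle\, dx\, dt
\end{align*}
by a quantity that, after $\ell^2$-summation in $\lam_0$, produces $\prod_{j=1}^{3}\|\psi_{j,\pm_j}\|_{X^s_{\pm_j}}$. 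Setting $\psi_{j,\lam_j} := P^1_{\lam_j}\psi_{j,\pm_j}$ and $w_{\lam_0}:= P^1_{\lam_0} w$, applying Plancherel to the convolution, and decomposing $\widehat V$ on shells $|\xi|\sim \mu$, I bound the spatial integral by
\begin{align*}
\sum_{\mu} \|\widehat V\chi_{|\xi|\sim \mu}\|_\infty\; \|P_\mu\langle\psi_{1,\lam_1},\beta\psi_{2,\lam_2}\rangle\|_{L^2_x}\; \|P_\mu\langle\beta\psi_{3,\lam_3}, w_{\lam_0}\rangle\|_{L^2_x},
\end{align*}
with the pointwise bound $\|\widehat V\chi_{|\xi|\sim\mu}\|_\infty \les \mu^{-(2-\gam_0)}\chi_{\mu\le 1}+\mu^{-(2-\gam)}\chi_{\mu>1}$ from Remark \ref{assumption}. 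Integrating in $t$ and applying Cauchy–Schwarz reduces the problem to two $L^2_{t,x}$ bilinear estimates of spinor products.

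I then split the $\mu$-sum into the long-range part $\mu\le 1$ and the short-range part $\mu>1$. For the long-range part, the output frequency constraint forces the non-resonant geometry $\lam_1\sim\lam_2\gg\mu$ and $\lam_0\sim\lam_3\gg\mu$, which is precisely the regime of Corollary \ref{bilinear2}; it yields a $\mu^{1/2}$ gain for each bilinear factor, so the $\mu$-series becomes
\begin{align*}
\sum_{\mu\le 1}\mu^{-(2-\gam_0)}\cdot \mu^{1/2}\cdot \mu^{1/2} = \sum_{\mu\le 1}\mu^{\gam_0-1},
\end{align*}
which converges precisely because $\gam_0>1$. For the short-range part, I invoke the null-form decomposition $\langle\Pi_+\phi_1,\beta\Pi_\pm \phi_2\rangle = \sum_j Q_j^\pm + \sum_j B_j^\pm$ and apply Lemmas \ref{null-esti} and \ref{bilinear}, choosing the $\pm$ signs according to the sign patterns $(\pm_1,\pm_2)$ and $(\pm_3,\pm_0)$; after decomposing $\psi_{j,\pm_j} = \Pi_+\psi_{j,\pm_j} + \Pi_-\psi_{j,\pm_j}$ and exploiting the $\epm$-conjugation symmetry of the $V^2_\pm$-norms where necessary. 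The null-form gains ($\mu\lammin^{-1/2}$ or $\mu^{1/2}$ in the resonant geometry, $\lammin^{1/2}$ otherwise), combined with the factor $\mu^{-(2-\gam)}$ and summation in $1\le \mu\les \lammin$, produce terms of size $\mu^{\gam-1}$ whose partial sums behave like $\lammin^{\gam-1}$; after a Cauchy–Schwarz across the dyadic scales $(\lam_0,\lam_1,\lam_2,\lam_3)$ and using the $s$-weights absorbed into $\|\psi_j\|_{X^s_{\pm_j}}$, this closes exactly at $s>\gam-1$. Where the null-form lemmas are stated in $U^p$ with $p>2$ rather than in $V^2$, I use Lemma \ref{log} to trade a $V^2_\pm$-input for a $U^p_\pm$-input at the cost of a logarithmic factor that is absorbed by the dyadic sum.

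The decisive obstacle is the singular low-frequency tail of the 2d Coulomb-type potential, whose $|\xi|^{-(2-\gam)}$ Fourier singularity near the origin is not controlled by any of the high-modulation machinery used for Yukawa-type nonlinearities in \cite{tes2d}. The new 2d localized bilinear estimate of Corollary \ref{bilinear2}, obtained from Propositions \ref{non-resonance} and \ref{resonance}, is tailored for precisely this purpose: its $\mu^{1/2}$-gain converts the $\mu^{-(2-\gam_0)}$ singularity into a summable $\mu^{\gam_0-1}$ series under the hypothesis $\gam_0>1$. A secondary subtlety, which fixes the regularity threshold, is the dyadic bookkeeping needed so that the factors $\lam_j^s \|P^1_{\lam_j}\psi_j\|_{U^2_{\pm_j}}$ can be reassembled by Cauchy–Schwarz into $\prod_j\|\psi_j\|_{X^s_{\pm_j}}$; balancing the $\lammin^{\gam-1}$ growth against the $s$-weights in the $X^s_{\pm_j}$-norm is what imposes $s>\gam-1$ in the final estimate.
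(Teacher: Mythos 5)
Your overall route is the same as the paper's: reduce by the duality of Lemma \ref{duality} to the quadrilinear integral of Lemma \ref{non-esti}, decompose $\widehat V$ dyadically and split at $|\xi|\sim 1$, treat the singular part $\mu\le 1$ with the new localized bilinear estimate (Corollary \ref{bilinear2}), giving the convergent series $\sum_{\mu\le1}\mu^{\gam-1}$, treat $\mu>1$ with the null-form/bilinear estimates of \cite{tes2d} plus Lemma \ref{log}, and close the dyadic bookkeeping at $s>\gam-1$. However, there is a concrete gap in your high-frequency argument: the configuration $\lam_1\le\lam_2\ll\lam_3\sim\lam_4$ in which the third input and the dual test function carry \emph{opposite} signs (the case $\pm_4=-$ after the normalization $\pm_1=\pm_3=+$). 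In that regime the only gain Lemma \ref{null-esti} provides for the factor $\left<\beta\psi_3,\psi_4\right>$ is $\mu^{\frac12}$, with \emph{no} negative power of the large frequency $\lam_3$, and $\mu$ only runs up to $\sim\lam_2$. Since the dual function lies merely in $V^2_{\pm_4}$, you must upgrade the $U^2$-based bound via Lemma \ref{log}, interpolating against a $U^4$-based bound of size $\sim\lam_3$ (Lemma \ref{bilinear}); the price is a factor $\log\lam_3$. Because the $U^2$-based bound carries no reserve $\lam_3^{-\epsilon}$ in this sign pattern, the resulting loss is $\lammed^{\gam-1}\log\lam_3$ with $\lammed\sim\lam_2\ll\lam_3$, and this logarithm is \emph{not} absorbed by the dyadic sum: in the final summation over $\lam_3\sim\lam_4$ it produces factors $(\log\lam_4)^2$ against $\lam_4^{2s}\|P^1_{\lam_4}\psi_3\|_{U^2_{\pm_3}}^2$, which destroys the $\ell^2_{\lam_4}$ bound no matter how you distribute the $s$-weights.

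The paper closes exactly this case with an ingredient your sketch omits: the modulation decomposition of Section 5.2. For the signs $\pm_1=\pm_3=+$, $\pm_4=-$ the totally low-modulation contribution vanishes identically (non-resonance in $\tau,\xi$), so at least one factor has modulation $\gtrsim\lam_3$; Lemma \ref{modul} then yields an extra $\lam_3^{-\frac12}$, visible in the factors $\lam_3^{-\frac{\de+1-\gam}{2}}$ of \eqref{qq-u2} and in \eqref{j3-esti}--\eqref{j4-esti}, which simultaneously restores the $(\lammed)^{\de}$ bound and neutralizes the logarithmic loss from Lemma \ref{log}. Without this step your estimate does not close at $s>\gam-1$. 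Two smaller imprecisions: the $\mu$-summation in the high-frequency part is constrained by $\mu\les\lammed$ (not $\lammin$), and it is $(\lammed)^{\de}$, not $\lammin^{\gam-1}$, that the final Cauchy--Schwarz over dyadic scales requires; also, Corollary \ref{bilinear2} formally needs $\mu\ll\lam_1$, so the boundary blocks $\mu\sim\lam_j\sim1$ of the low-frequency part should be disposed of separately (e.g.\ by Strichartz and the transfer principle), though this is the same harmless glossing the paper itself makes.
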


Once Proposition \ref{nonlinear} has been proved, this trilinear estimate together with linear estimate \eqref{linear-est} yields
\begin{align*}
\|\mathcal{H}(\psi)\|_{X^s} = \sum_{\pm_0 \in \{\pm\}}\|\Pi_{\pm_0}\mathcal H(\psi)\|_{X_{\pm_0}^s} \le C(\|\psi_0\|_{H^s} + \|\psi\|_{X^s}^3).
\end{align*}
If $\de$ is small enough so that $C\delta^3 \le \frac\delta8$ and $\psi_0$ satisfies $C\|\psi_0\|_{H^s} \le \frac\delta2$,
$\mathcal H$ is a self-mapping on $X^s(\delta)$. In particular, we get
\begin{align*}
\|\mathcal{H}(\psi) - \mathcal{H}(\phi)\|_{X^s} &\le C\left(\|\psi\|_{X^s}+ \|\phi\|_{X^s}\right)^2 \|\psi- \phi\|_{X^s} \le 4C\de^2\|\psi- \phi\|_{X^s} \le \frac12\|\psi- \phi\|_{X^s}.
\end{align*}
Hence $\mathcal H: X^s(\de) \to X^s(\de)$ is a contraction mapping for sufficiently small $\de$ and then we get a unique solution $\psi_p \in L^\infty([0, \infty); H^s)$ to \eqref{maineq}. The time continuity and continuous dependency on data follow readily from the Duhamel's formula and Proposition \ref{nonlinear}. By the time symmetry of \eqref{maineq} we also obtain a unique solution $\psi_n \in C((-\infty, 0], H^s)$ with the continuous dependency on data. Defining $\psi = \psi_p + \psi_n$, we get the global well-posedness of \eqref{maineq}.

Now we move onto the scattering property of \eqref{maineq-decom}. Since the backward scattering can be treated similarly to the forward one, we omit its proof. For $\lambda \ge 1$ let us define
$$
\vp_{\lam,\pm}:=  P_\lambda^1 \epm  \mathcal{N}_{\pm}(\psi),
$$
where $\mathcal{N}_{\pm}(\psi) = \sum_{\pm_j \in \{\pm\}} \mathcal{N}_{\pm}(\psi_{\pm_1},\psi_{\pm_2},\psi_{\pm_3})(t)$. Then Lemma \ref{embedd} shows that
$$
\vp_{\lam,\pm} \in V_{\pm}^2
$$
for all $\lam\ge1$. Since $\sum_{\lam \ge1} \lam^{2s} \|\vp_{\lam,\pm}\|_{V_\pm^2} \les 1$, we have
$$
\phi_\pm :=\psi_{0,\pm} + \lim_{t \to \infty} \sum_{\lam\ge1}\vp_{\lam,\pm} \in H^s
$$
and
$$
\|\psi_\pm(t) - \emp \phi_\pm\|_{H^s}  \xrightarrow{t \to \infty} 0.
$$
This completes the proof of scattering of \eqref{maineq-decom}.
%%%%%%%%%%%%%%%%%%%%%%%%%%%%%%%%%%%%%%%%%%%%%%%%%%%%%%%%%%%%%%%%%%%%%%%%%%%%%%%%%%%%%%%%%%%%%%%%%%%%%%%%%%%%%%%%%%%%%%%%%%%%%%%%%%%%%%%%%%%%%%%%%%%%%%%%%%%%%%%%%%%%%%%%%%%%%%%%%%%%%%%%%%%%%%%%%%%%%%
\subsection {Proof of Proposition \ref{nonlinear}}
By Lemma \ref{duality} we obtain
\begin{align*}
& \left\|P_{\lam_4}^1 \int_0^t e^{\mp i(t-t')\brad}\Pi_{\pm}(D)[(V*\left< \psi_{1,\pm_1}, \beta\psi_{2,\pm_2}\right>)\beta\psi_{3,\pm_3}] dt' \right\|_{U_{\pm\brad}^2}\\
& \qquad\qquad =\left\|P_{\lam_4}^1 \int_0^t e^{\pm it'\brad}\Pi_{\pm}(D)[(V*\left< \psi_{1,\pm_1}, \beta\psi_{2,\pm_2}\right>)\beta\psi_{3,\pm_3}] dt' \right\|_{U^2}\\
& \qquad\qquad = \sup_{\begin{subarray}{ll} \|\phi\|_{V^2}=1 \\ \;\;\phi \in C_0^\infty  \end{subarray}}\left| \iint (V*\left< \psi_{1,\pm_1}, \beta\psi_{2,\pm_2}\right>)\left<\beta\psi_{3,\pm_3}, \Pi_{\pm}(D)P_{\lam_4}^1 e^{\mp it\brad}\phi\right> dt dx\right|\\
& \qquad\qquad = \sup_{\|\psi_4\|_{V_{\pm\brad}^2}=1}\left| \iint (V*\left< \psi_{1,\pm_1}, \beta\psi_{2,\pm_2}\right>)\left<\beta\psi_{3,\pm_3}, \Pi_{\pm}(D)P_{\lam_4}^1 \psi_4\right> dt dx\right|.
\end{align*}
This induces that
\begin{align*}
\|\mathcal{N}_{\pm}(\psi_{1,\pm_1},\psi_{2,\pm_2},\psi_{3,\pm_3})\|_{X_\pm^s}^2 \les \sum_{ \lam_4 \ge 1} \lam_4^{2s} \left( \sup_{\|\psi_4\|_{V_{\pm\brad}^2}=1}\left| \sum_{\lam_i \ge 1}\iint (V*\left< \psi_{1,\pm_1}, \beta\psi_{2,\pm_2}\right>)\left<\beta\psi_{3,\pm_3},  \psi_{4,\pm_4}\right> dt dx\right| \right)^2.
\end{align*}
The following lemma to be proved in the next section plays a crucial role in the proof of Proposition \ref{nonlinear}.
\begin{lem}\label{non-esti} Let $\de > \gam -1$. Then we have for all $\psi_{j,\pm_j}\in U_{\pm_j}^2,\; \psi_{4,\pm_4} \in V_{\pm_4}^2$
	\begin{align}\label{non-integral}
	\left| \iint (V*\left< \psi_{1,\pm_1}, \beta\psi_{2,\pm_2}\right>)\left<\beta\psi_{3,\pm_3},  \psi_{4,\pm_4}\right> dt dx\right| \les (\lammed)^\de \prod_{j=1}^{3}\|\psi_{j,\pm_j}\|_{U_{\pm_j}^2}\|\psi_{4,\pm_4}\|_{V_{\pm_4}^2},
	\end{align}
	where $\lammed$ is median of $\lam_1,\lam_2,\lam_3$.
\end{lem}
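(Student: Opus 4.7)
\textbf{Plan of proof of Lemma \ref{non-esti}.} The starting point is a dyadic decomposition of the potential. Since $\widehat V$ satisfies the size bound $|\widehat V(\xi)|\les |\xi|^{-(2-\gam_0)}\chi_{\{|\xi|\le 1\}} + |\xi|^{-(2-\gam)}\chi_{\{|\xi|>1\}}$, insert $1 = \sum_\mu \rho_\mu(\xi)$ on the Fourier side and, after using Plancherel together with the frequency-support constraint $\mu \les \max(\lam_1,\lam_2),\ \max(\lam_3,\lam_4)$ and the Cauchy--Schwarz inequality in $L^2_{t,x}$, bound the integral by
\begin{equation*}
\sum_{\mu} M(\mu)\, \bigl\|P_\mu \langle \psi_{1,\pm_1},\beta \psi_{2,\pm_2}\rangle\bigr\|\, \bigl\|P_\mu \langle \beta \psi_{3,\pm_3},\psi_{4,\pm_4}\rangle\bigr\|,
\end{equation*}
where $M(\mu)=\mu^{-(2-\gam_0)}$ for $\mu\le 1$ and $M(\mu)=\mu^{-(2-\gam)}$ for $\mu>1$. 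A further Littlewood--Paley decomposition at scales $\lam_j\ge 1$ reduces matters to a bilinear estimate on each bracket, tagged with dyadic parameters $\mu,\lam_1,\lam_2,\lam_3,\lam_4$.

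\textbf{High-frequency part of $V$ ($\mu>1$).} Apply the null-form/bilinear decomposition $\langle \Pi_{\pm_1}(D)\psi_{1,\lam_1}, \beta \Pi_{\pm_2}(D)\psi_{2,\lam_2}\rangle = \sum Q_j^{\pm} + \sum B_j^{\pm}$ to each bracket and use Lemma \ref{null-esti} and Lemma \ref{bilinear}. Since $\psi_{4,\pm_4}\in V_{\pm_4}^2$ while the null-form estimates of Lemma \ref{null-esti} require the $U_\pm^2$-norm, invoke the logarithmic interpolation Lemma \ref{log} to split $\psi_{4,\pm_4}=u+w$ with $u\in U_{\pm_4}^2,\ w\in U_{\pm_4}^{p}$ ($p>2$ arbitrary), combining Lemma \ref{null-esti} for $u$ with the $L_t^q L_x^r$-bound of Lemma \ref{bilinear} for $w$. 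Against the weight $\mu^{-(2-\gam)}$ the null-form gains $\mu^{1/2}\lammin^{1/2}$ (and the bilinear form gives an $\varepsilon$-loss), and the $\mu$-sum is controlled by $\sum_{1<\mu\les\min(\lam_{12},\lam_{34})} \mu^{\gam-1}$, producing a factor of the form $\lammed^{\gam-1+\varepsilon}$.

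\textbf{Low-frequency part of $V$ ($\mu\le 1$).} Here $M(\mu)=\mu^{-(2-\gam_0)}$ is singular and is the source of the main difficulty. In the main case $\mu \ll \lam_1\sim\lam_2$ and $\mu \ll \lam_3\sim\lam_4$, apply Corollary \ref{bilinear2} to \emph{both} brackets, gaining a $\mu^{1/2}\cdot \mu^{1/2}=\mu$ factor and reducing the $\mu$-sum to $\sum_{\mu\le 1}\mu^{\gam_0-1}$, which converges since $\gam_0>1$. The remaining cases (non-resonant pairs $\lam_1\not\sim\lam_2$, or $\mu$ comparable to some $\lam_j$) either force $\lam_{\min}$ on that pair to be $\sim \mu\le 1$ and hence $=1$, or allow us to replace Corollary \ref{bilinear2} by the $\mu>1$ estimates of Lemmas \ref{null-esti}--\ref{bilinear} with a uniform bound since $M(\mu)\les 1$ after the $\lam$-sum restriction.

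\textbf{Main obstacle.} Beyond the singularity at $\mu=0$ (resolved precisely by Corollary \ref{bilinear2}, which was designed for this purpose), the delicate point is combinatorial: arranging the null-form/bilinear loss to land on $\lammed$ rather than $\lammax$. This requires, case by case, deciding which of the four functions absorbs the $U^r$-norm (via Lemma \ref{log}) and which pair gets the null-form estimate versus the plain bilinear estimate, exploiting $\mu\les \min(\max(\lam_1,\lam_2),\max(\lam_3,\lam_4))$. Once this bookkeeping is done, all four dyadic sums converge for any $\de>\gam-1$, absorbing the $\varepsilon$-losses into $\de$, and the estimate follows.
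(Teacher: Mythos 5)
Your overall architecture (dyadic decomposition of $V$, Cauchy--Schwarz in $L^2_{t,x}$, Corollary \ref{bilinear2} applied to both brackets for $\mu\le 1$, null-form/bilinear estimates plus Lemma \ref{log} for $\mu>1$) coincides with the paper's proof for the low-frequency part $J^l$ and for most high-frequency cases. But there is a genuine gap in the high-frequency part: you never address the opposite-sign configuration in the regime $\lam_1\le\lam_2\ll\lam_3\sim\lam_4$ (after normalizing $\pm_1=\pm_3=+$, this is the case $\pm_4=-$). Note first that your summary of the null-form gain as ``$\mu^{1/2}\lammin^{1/2}$'' misstates Lemma \ref{null-esti}: for comparable frequencies the gain is $\mu\lam^{-1/2}$ in the $(+,+)$ case but only $\mu^{1/2}$ in the $(+,-)$ case, with no negative power of $\lam_3$, and Lemma \ref{bilinear} provides no $\lam_3$-decay either. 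At the pure $U^2$ level this is still harmless (one gets roughly $\sum_{1\le\mu\les\lam_2}\mu^{\gam-2}\,\mu^{1/2}\,\mu^{1/2}\sim\lammed^{\gam-1}\le\lammed^{\de}$), but the lemma requires $\psi_{4}$ only in $V^2$, and the upgrade via Lemma \ref{log} needs a companion $U^4$-type bound, which by Lemma \ref{bilinear} grows like a positive power of $\lam_3\sim\lammax$; the logarithmic interpolation then produces a factor $\log \lam_3$ that cannot be absorbed into $\lammed^{\de-(\gam-1)}$, since $\lam_3$ may be arbitrarily large compared with $\lammed$. Your ``bookkeeping'' paragraph (choosing which function carries the $U^r$ norm and which pair receives the null form) cannot repair this, because no combination of Lemmas \ref{null-esti} and \ref{bilinear} yields any decay in $\lam_3$ for a $(+,-)$ pair at comparable frequencies, and $\psi_4$ is forced by duality to be the $V^2$ function.

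The paper closes precisely this case with an ingredient absent from your outline: a modulation decomposition $\psi_j=\Lambda^{\pm_j}_{<\lam_3/8}\psi_j+\Lambda^{\pm_j}_{\ge\lam_3/8}\psi_j$, the non-resonance observation that the contribution with all four modulations low vanishes identically (the constraints $\tau_1-\tau_2+\tau_3-\tau_4=0$, $\xi_1-\xi_2+\xi_3-\xi_4=0$ are incompatible with $\pm_1=\pm_3=+$, $\pm_4=-$ and $\lam_2\ll\lam_3\sim\lam_4$), and Lemma \ref{modul}, which converts a modulation $\gtrsim\lam_3$ into the gain $\lam_3^{-1/2}$ in $L^2_{t,x}$. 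This is the source of the $\lam_3^{-\varepsilon}$ decay that lets the logarithmic interpolation close, i.e.\ the estimates \eqref{j3-esti} and \eqref{j4-esti}, which are then invoked in the $J_3^h$ and $J_4^h$ cases. Without this modulation/non-resonance step your high-frequency argument does not go through; the remainder of your plan (in particular the treatment of $J^l$ by applying Corollary \ref{bilinear2} to both brackets and summing $\sum_{\mu\le1}\mu^{\gam-1}$) is consistent with the paper.
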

For the sake of simplicity we denote $\psi_{\pm_j}$ by $\psi_j$. Then Lemma \ref{non-esti} implies that for $\gam -1 <\de < s$
\begin{align*}
\|\mathcal{N}_{\pm}(\psi_{1},\psi_{2},\psi_{3})\|_{X_\pm^s}^2 \les \sum_{ \lam_4 \ge 1} \lam_4^{2s} \left( \sum_{\lam_i \ge 1} (\lammed)^{\de} \prod_{j=1}^{3}\|\psi_j\|_{U_{\pm_j}^2}\right)^2.
\end{align*}
We divide the cases into (i) $\lam_3 \sim \lam_4$, (ii) $\lam_3 \ll \lam_4$, and (iii) $\lam_3 \gg \lam_4$.
\paragraph{(i) The case: $\lam_3 \sim \lam_4$} By Cauchy-Schwarz we obtain
\begin{align*}
\|\mathcal{N}_{\pm}(\psi_{1},\psi_{2},\psi_{3})\|_{X_\pm^s}^2 &\les \sum_{ \lam_4 \ge 1} \lam_4^{2s} \left( \sum_{\lam_i \ge 1} (\lammed)^{\de} \prod_{j=1}^{3}\|\psi_j\|_{U_{\pm_j}^2}\right)^2\\
                                                    &\les \sum_{ \lam_4 \ge 1} \lam_4^{2s}  \left(\sum_{\lam_3 \sim \lam_4} \lam_1^{\de-s}\lam_2^{\de-s} \lam_1^s\|\psi_1\|_{U_{\pm_1}^2}\lam_2^s\|\psi_2\|_{U_{\pm_2}^2}\|\psi_3\|_{U_{\pm_3}^2} \right)^2\\
                                                    &\les \|\psi_1\|_{X_{\pm_1}^s}^2\|\psi_2\|_{X_{\pm_2}^s}^2\sum_{ \lam_4 \ge 1} \left(\sum_{\lam_3 \sim \lam_4} \lam_4^{s}\|\psi_3\|_{U_{\pm_3}^2}\right)^2\\
                                                    &\les \prod_{j=1}^{3}\|\psi_{j}\|_{X_{\pm_j}^s}^2.
\end{align*}

\paragraph{(ii) The case: $\lam_3 \ll \lam_4$} We further divide the case into $\lam_1 \sim \lam_2$ and $\lam_1 \nsim \lam_2$.
If $\lam_1 \sim \lam_2$, by Cauchy-Schwarz inequality and Young's convolution inequality  we get
\begin{align*}
\|\mathcal{N}_{\pm}(\psi_{1},\psi_{2},\psi_{3})\|_{X_\pm^s}^2 &\les \sum_{ \lam_4 \ge 1} \lam_4^{2s} \left( \sum_{\lam_i \ge 1} (\lammed)^{\de} \prod_{j=1}^{3}\|\psi_j\|_{U_{\pm_j}^2}\right)^2\\
&\les \sum_{ \lam_4 \ge 1} \lam_4^{2s}  \left(\sum_{\lam_3 \ll \lam_4} \lam_1^{\de-s}\lam_3^{-s} \lam_1^s\|\psi_1\|_{U_{\pm_1}^2}\|\psi_2\|_{U_{\pm_2}^2}  \lam_3^s\|\psi_3\|_{U_{\pm_3}^2} \right)^2\\
&\les \|\psi_1\|_{X_{\pm_1}^s}^2\|\psi_3\|_{X_{\pm_3}^s}^2\sum_{ \lam_4 \ge 1} \left(\sum_{\lam_4 \ll \lam_2} \left(\frac{\lam_4}{\lam_2}\right)^{s} \lam_2^s\|\psi_2\|_{U_{\pm_2}^2}\right)^2\\
&\les \prod_{j=1}^{3}\|\psi_{j}\|_{X_{\pm_j}^s}^2.
\end{align*}
Next if $\lam_1 \ll \lam_2$, we have
\begin{align*}
\|\mathcal{N}_{\pm}(\psi_{1},\psi_{2},\psi_{3})\|_{X_\pm^s}^2 &\les \sum_{ \lam_4 \ge 1} \lam_4^{2s} \left( \sum_{\lam_i \ge 1} (\lammed)^{\de} \prod_{j=1}^{3}\|\psi_j\|_{U_{\pm_j}^2}\right)^2\\
&\les \sum_{ \lam_4 \ge 1} \lam_4^{2s}  \left(\sum_{\lam_3 \ll \lam_4} \lam_1^{\de-s}\lam_3^{\de-s} \lam_1^s\|\psi_1\|_{U_{\pm_1}^2}\|\psi_2\|_{U_{\pm_2}^2}  \lam_3^s\|\psi_3\|_{U_{\pm_3}^2} \right)^2\\
&\les \|\psi_1\|_{X_{\pm_1}^s}^2\|\psi_3\|_{X_{\pm_3}^s}^2\sum_{ \lam_4 \ge 1} \left(\sum_{\lam_4 \sim \lam_2}  \lam_2^s\|\psi_2\|_{U_{\pm_2}^2}\right)^2\\
&\les \prod_{j=1}^{3}\|\psi_{j}\|_{X_{\pm_j}^s}^2.
\end{align*}
In the case $\lam_1 \gg \lam_2$, a similar estimate to the above can be obtained.

\smallskip

\paragraph{(iii) The case: $\lam_3 \gg \lam_4$} We also divide this case into $\lam_1 \sim \lam_2$ and $\lam_1 \nsim \lam_2$.
If $\lam_1 \sim \lam_2$, by Cauchy-Schwarz inequality and Young's convolution inequality we get
\begin{align*}
\|\mathcal{N}_{\pm}(\psi_{1},\psi_{2},\psi_{3})\|_{X_\pm^s}^2 &\les \sum_{ \lam_4 \ge 1} \lam_4^{2s} \left( \sum_{\lam_i \ge 1} (\lammed)^{\de} \prod_{j=1}^{3}\|\psi_j\|_{U_{\pm_j}^2}\right)^2\\
&\les \sum_{ \lam_4 \ge 1} \lam_4^{2s}  \left(\sum_{\lam_3 \gg \lam_4} \lam_1^{\de-s}\lam_2^{\de-s} \lam_1^s\|\psi_1\|_{U_{\pm_1}^2}\lam_2^s\|\psi_2\|_{U_{\pm_2}^2}\|\psi_3\|_{U_{\pm_3}^2} \right)^2\\
&\les \|\psi_1\|_{X_{\pm_1}^s}^2\|\psi_2\|_{X_{\pm_2}^s}^2\sum_{ \lam_4 \ge 1} \left(\sum_{\lam_3 \gg \lam_4} \left( \frac{\lam_4}{\lam_3}\right)^s \lam_3^{s}\|\psi_3\|_{U_{\pm_3}^2}\right)^2\\
&\les \prod_{j=1}^{3}\|\psi_{j}\|_{X_{\pm_j}^s}^2.
\end{align*}
Next if $\lam_1 \ll \lam_2$, we have
\begin{align*}
\|\mathcal{N}_{\pm}(\psi_{1},\psi_{2},\psi_{3})\|_{X_\pm^s}^2 &\les \sum_{ \lam_4 \ge 1} \lam_4^{2s} \left( \sum_{\lam_i \ge 1} (\lammed)^{\de} \prod_{j=1}^{3}\|\psi_j\|_{U_{\pm_j}^2}\right)^2\\
&\les \sum_{ \lam_4 \ge 1} \lam_4^{2s}  \left(\sum_{\lam_3 \gg \lam_4} \lam_1^{\de-s}\lam_2^{\de-s} \lam_1^s\|\psi_1\|_{U_{\pm_1}^2}\lam_2^s\|\psi_2\|_{U_{\pm_2}^2}  \|\psi_3\|_{U_{\pm_3}^2} \right)^2\\
&\les \|\psi_1\|_{X_{\pm_1}^s}^2\|\psi_3\|_{X_{\pm_3}^s}^2\sum_{ \lam_4 \ge 1} \left(\sum_{\lam_3 \gg \lam_4}  \left( \frac{\lam_4}{\lam_3}\right)^s \lam_3^s\|\psi_3\|_{U_{\pm_3}^2}\right)^2\\
&\les \prod_{j=1}^{3}\|\psi_{j}\|_{X_{\pm_j}^s}^2.
\end{align*}
The case $\lam_1 \gg \lam_2$ can be treated similarly as above. Therefore we get the desired results.

%%%%%%%%%%%%%%%%%%%%%%%%%%%%%%%%%%%%%%%%%%%%%%%%%%%%%%%%%%%%%%%%%%%%%%%%%%%%%%%%%%%%%%%%%%%%%%%%%%%%%%%%%%%%%%%%%%%%%%%%%%%%%%%%%%%%%%%%%%%%%%%%%%%%%%%%%%%%%%%%%%%%%%%%%%%%%%%%%%%%%%%%%%%%%%%%%%%%%%%%%%%%%%%%%%%%%%%%%%%%%%%%%%%%%%%%%%%%%%%%%%%%%%%%%%%%%%%%%%%%%%%%%%%%%%%%%%%%%%%%%%%%%%%%%%%%%%%%%%%%%%%%%%%%%%%%%%%%%%%%%%%%%%%%%%%%%%%%%%%%%%%%%%%%%%%%%%%%%%%%%%%%%%%%%%%%%%%%%%%%%%%%%%%%%%%%%%%%%%%%%%%%%%%%%%%%%%%%%%%%%%%%%%%%%%%%%%%%%%%%%%%%%%%%%%%%%%%%%%%%%%%%%%%%%%%%%%%%%%%%%%%%%%%%%%%%%%%%%%%%%%%%%%%%%%%%%%%%%%%%%%%%%%%%%%%%%%%%%%%%%%%%%%%%%%%%%%%%%%%%%%%%%%%%%%%%%%%%%%%%%%%%%%%%%%%%%%%%%%%%%%%%%%%%%%%%%%%%%%%%%%%%%%%%%%%%%%%%%%%%%%%%%%%%%%%%%%%%%%%%
\section{Proof of Lemma \ref{non-esti}}
\newcommand{\pmu}{P_{\mu}}
\newcommand{\utwo}{U_{\pm}^2}
\newcommand{\utwoj}{U_{\pm_j}^2}
\newcommand{\vtwo}{V_{\pm}^2}
\newcommand{\vtwoj}{V_{\pm_j}^2}
\newcommand{\ufour}{U_{\pm}^4}
\newcommand{\ufourj}{U_{\pm_j}^4}
In this section we prove the Lemma \ref{non-esti}. We decompose the left-hand side of \eqref{non-integral} into low-high frequency parts as follows:
\begin{align*}
J({\lam_1,\lam_2,\lam_3,\lam_4}) &:= \left| \iint V*\left< \psi_1, \beta\psi_2\right>\left<\beta\psi_3, \psi_4\right>  dxdt  \right|\\
                                 &= \sum_{\mu >0}  \left| \iint P_\mu \left( V*\left< \psi_1, \beta\psi_2\right>\right)\cdot  \left( \widetilde{P_\mu} \left<\beta\psi_3, \psi_4\right>  \right) dxdt  \right|\\
                                 &= \sum_{0<\mu \le 1}\left| \iint P_\mu \left( V*\left< \psi_1, \beta\psi_2\right>\right)\cdot  \left( \widetilde{P_\mu} \left<\beta\psi_3, \psi_4\right>  \right) dxdt  \right| \\
                                 & \qquad  +  \sum_{\mu > 1}\left| \iint P_\mu \left( V*\left< \psi_1, \beta\psi_2\right>\right)\cdot  \left( \widetilde{P_\mu} \left<\beta\psi_3, \psi_4\right>  \right) dxdt  \right|\\
                                 &=: J^l + J^h.
\end{align*}

This section is composed of three subsections. In the first one we deal with the low-frequency parts by using fully the estimates of Section \ref{low-esti}. In the next one we reveal the modulation lemma useful to treat high-frequency parts. And the detailed proof of high-frequency parts is provided in the third subsection. From now on we may assume that $\lam_1 \le \lam_2$, $\lam_3 \le \lam_4$, and $\pm_1 = \pm_3 = +$ thanks to the symmetry.

%%%%%%%%%%%%%%%%%%%%%%%%%%%%%%%%%%%%%%%%%%%%%%%%%%%%%%%%%%%%%%%%%%%%%%%%%%%%%%%%%%%%%%%%%%%%%%%%%%%%%%%%%%%%%%%%%%%%%%%%%%%%%%%%%%%%%%%%%%%%%%%%%%%%%%%%%%%%%%%%%%%%%%%%%%%%%%%%%%%%%%%%%%%%%%
\subsection{Low-frequency parts}\label{lowfrequency}
Since $0 < \mu \le 1$ and $\lam_j\ge1(j=1,\cdots,4)$, the support condition yields that $\lam_1 \sim \lam_2$ and $\lam_3 \sim \lam_4$. To prove Lemma \ref{non-esti} for $J^l$ we use the Corollary \ref{bilinear2}. In fact, we get
\begin{align*}
J^l &= \sum_{0<\mu \le 1}\left| \iint P_\mu \left( V*\left< \psi_1, \beta\psi_2\right>\right)\cdot  \left( \widetilde{P_\mu} \left<\beta\psi_3, \psi_4\right>  \right) dxdt  \right| \\
&\les  \sum_{0<\mu \le 1} \left\| P_\mu \left( V*\left< \psi_1, \beta\psi_2\right>\right) \right\| \left\| \widetilde{P_\mu} \left<\beta\psi_3, \psi_4\right>  \right\|\\
&\les \sum_{0<\mu \le 1} \mu^{-1+ \gam} \prod_{j=1}^3\|\psi_j\|_{V_{\pm_j}^2}\|\psi_4\|_{V_{\pm_4}^2}\\
&\les \prod_{j=1}^3\|\psi_j\|_{V_{\pm_j}^2}\|\psi_4\|_{V_{\pm_4}^2}.
\end{align*}

%%%%%%%%%%%%%%%%%%%%%%%%%%%%%%%%%%%%%%%%%%%%%%%%%%%%%%%%%%%%%%%%%%%%%%%%%%%%%%%%%%%%%%%%%%%%%%%%%%%%%%%%%%%%%%%%%%%%%%%%%%%%%%%%%%%%%%%%%%%%%%%%%%%%%%%%%%%%%%%%%%%%%%%%%%%%%%%%%%%%%%%%%%
\subsection{Modulation estimates}
Let us define the modulation projections by
$$
\Lambda_{\lam}^\pm u = \mathcal{F}_{t,x}^{-1} \left( \beta_\lam(|\tau \pm \left< \xi \right> |) \mathcal{F}_{t,x} u \right).
$$
and
$$
\Lambda_{\ge \lam}^\pm u = \sum_{\sigma \ge \lam}\Lambda_{\sigma}^\pm u, \qquad  \Lambda_{<\lam}^\pm u = 1- \Lambda_{\ge\lam}^\pm u.
$$

\begin{lem}[Corollary 2.18 of \cite{haheko}]\label{modul} Let $\lam \in 2^{\mathbb{Z}}$. Then
$$
\| \Lambda_{\ge \lam}^\pm u \| \les \lam^{-\frac12}\|u\|_{\vtwo}.
$$
\end{lem}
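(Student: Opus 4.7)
The plan is to move to the spacetime Fourier side, use a gauge change to diagonalize the half-wave operator, and then exploit the high-modulation cutoff to extract the $\lam^{-1/2}$ factor from the structural features of $V^2_\pm$-functions.

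First I would apply Plancherel in $(t,x)$ together with the gauge change $v(t) := e^{\pm it\brad}u(t)$, which preserves norms ($\|v\|_{V^2}=\|u\|_{\vtwo}$) and sends $|\tau\pm\langle\xi\rangle|\ge\lam$ into $|\rho|\ge\lam$ under $\rho:=\tau\mp\langle\xi\rangle$. The assertion then reduces to the purely one-dimensional (in $\rho$, with $\xi$ a parameter) inequality
\begin{align*}
\int\int_{|\rho|\ge\lam} |\widetilde v(\rho,\xi)|^2\,d\rho\,d\xi \les \lam^{-1}\|v\|_{V^2}^2 ,
\end{align*}
where $\widetilde v$ denotes the time-Fourier transform of the $L_x^2$-valued $v$.

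Next I would verify this reduced estimate for $U^2$-atoms $a(t)=\sum_{k=1}^N\chi_{[t_{k-1},t_k)}(t)\phi_k$ with $\sum_k\|\phi_k\|_{L_x^2}^2=1$. A direct computation followed by summation by parts (with the conventions $\phi_0=\phi_{N+1}=0$) gives
\begin{align*}
\widetilde a(\rho,\xi) = \frac{1}{i\rho}\sum_{k=0}^{N} (\hat\phi_{k+1}-\hat\phi_k)(\xi)\,e^{-it_k\rho} .
\end{align*}
Integrating $|\widetilde a|^2$ over $\{|\rho|\ge\lam\}\times\rt$ leads to a quadratic form in the increments $\{\hat\phi_{k+1}-\hat\phi_k\}$ whose kernel is $\mathcal K_\lam(t_k-t_l):=\int_{|\rho|\ge\lam}e^{-i(t_k-t_l)\rho}/\rho^2\,d\rho$. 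The diagonal contributes $2/\lam$ and the off-diagonal entries are handled via a Schur-type estimate using the decay $|\mathcal K_\lam(s)|\les \min(\lam^{-1},|s|^{-1}\lam^{-2})$, yielding $\|\Lambda^\pm_{\ge\lam}a\|^2\les \lam^{-1}\sum_k\|\phi_{k+1}-\phi_k\|_{L_x^2}^2\les\lam^{-1}$. The atomic decomposition together with the triangle inequality then upgrades this to the full $U^2_\pm$-bound.

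Finally, to pass from $U^2_\pm$ to $\vtwo$, I would invoke the logarithmic interpolation of Lemma \ref{log}: for any $v\in \vtwo$ and $M\ge 1$, decompose $v=u_1+u_2$ with $u_1\in U^2_\pm$ and $u_2\in U^p_\pm$ for some $p>2$. Combined with an analogous $U^p$-atomic bound $\|\Lambda^\pm_{\ge\lam}u_2\|\les \lam^{-1/p}\|u_2\|_{U^p_\pm}$ and the choice $M\sim(1/2-1/p)\log\lam$, the $\lam^{1/2-1/p}$ mismatch is absorbed at the cost of a harmless logarithmic factor. The hard part will be the Schur-type control on the oscillatory sum in the atomic step, where the phases $\{e^{-it_k\rho}\}$ are not orthogonal in $L^2(\chi_{|\rho|\ge\lam}\,d\rho/\rho^2)$; the compatibility of the off-diagonal decay of $\mathcal K_\lam$ with the $\ell^2$-normalization of $\{\phi_k\}$ is what ultimately produces the sharp constant $\lam^{-1/2}$, mirroring the approach of \cite{haheko}.
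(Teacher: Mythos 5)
First, note that the paper does not prove this lemma at all: it is quoted verbatim as Corollary~2.18 of \cite{haheko}, so your proposal must be measured against the standard argument behind that citation. The decisive gap is in your atomic step. The bound you claim there, $\|\Lambda^\pm_{\ge\lambda}a\|^2\lesssim\lambda^{-1}\sum_k\|\phi_{k+1}-\phi_k\|_{L_x^2}^2$, cannot follow from a Schur test on $\mathcal K_\lambda$ and is in fact false as a statement about arbitrary increments: place $N$ equal increments of size $N^{-1/2}$ at times $t_k=k\epsilon$ with $\epsilon\ll(N\lambda)^{-1}$; then on $\lambda\le|\rho|\le2\lambda$ the phases $e^{-it_k\rho}$ do not oscillate, the left-hand side is of size $N\lambda^{-1}$ while the right-hand side is $\lambda^{-1}$, and correspondingly the Schur row sums $\sum_l|\mathcal K_\lambda(t_k-t_l)|$ are of order $N\lambda^{-1}$, not $\lambda^{-1}$, since nothing prevents the jump times from clustering at scales far below $\lambda^{-1}$. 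What makes the true atomic bound hold is the normalization $\sum_k\|\phi_k\|_{L_x^2}^2=1$, i.e.\ $\ell^2$ control of the \emph{partial sums} of the increments (in the clustering example the $\phi_k$ grow like $kN^{-1/2}$, so the atom norm is huge); your argument never uses this, and uniformity in the number of steps $N$ --- which is the whole difficulty --- is left unaddressed. The auxiliary claim $\|\Lambda^\pm_{\ge\lambda}u\|\lesssim\lambda^{-1/p}\|u\|_{U^p_\pm}$ is likewise asserted without proof and its naive atomic version meets the same obstruction.

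Second, even granting both atomic bounds, the final step does not yield the lemma as stated: optimizing $M\sim\log\lambda$ in Lemma~\ref{log} gives $\|\Lambda^\pm_{\ge\lambda}v\|\lesssim\lambda^{-1/2}\log\lambda\,\|v\|_{V^2_\pm}$, a genuine loss compared with $\lambda^{-1/2}$ (probably harmless in the applications of Section~5, where there is slack in the dyadic sums, but not a proof of the quoted estimate). The standard proof avoids atoms and interpolation altogether and works directly in $V^2$: after your gauge reduction, use that the average of $|1-e^{is\rho}|^2$ over $0\le s\le 4/\lambda$ is bounded below for $|\rho|\ge\lambda$, so that by Plancherel $\int_{|\rho|\ge\lambda}\|\widetilde v(\rho)\|_{L^2_x}^2\,d\rho\lesssim\sup_{0\le s\le 4/\lambda}\|v(\cdot-s)-v\|_{L^2_{t,x}}^2$, and then prove the translation-difference estimate $\|v(\cdot-s)-v\|_{L^2_{t,x}}^2\lesssim|s|\,\|v\|_{V^2}^2$ by partitioning $\mathbb R$ into intervals of length $|s|$, choosing near-extremal points in each, and summing along two interleaved monotone partitions. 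This gives the clean $\lambda^{-1/2}$ with no logarithm and no limit over atoms; replacing your Schur step and the interpolation step by this difference argument would close both gaps.
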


\begin{lem}
Let $\pm_4=-$. Suppose $\lam_j \in 2^{\mathbb{Z}}$ satisfy $\lam_1 \le \lam_2 \ll \lam_3 \sim \lam_4$ and $|\xi| \ge 1$, where $\xi$ is the frequency of $V$. Then
\begin{align}
\left| \iint V* B(\psi_1,\psi_2)\cdot Q(\psi_3,\psi_4)  dxdt  \right| \les \lam_2^{\de}   \prod_{j=1}^3\|\psi_j\|_{U_{\pm_j}^2}\|\psi_4\|_{V_{\pm_4}^2},  \label{j3-esti}\\
\left| \iint V* Q(\psi_1,\psi_2)\cdot Q(\psi_3,\psi_4)  dxdt  \right| \les \lam_2^{\de}   \prod_{j=1}^3\|\psi_j\|_{U_{\pm_j}^2}\|\psi_4\|_{V_{\pm_4}^2}, \label{j4-esti}
\end{align}
for any $\de >0$.
\end{lem}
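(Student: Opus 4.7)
The plan is to Fourier-localize the output of the inner bilinear form at a dyadic scale $\mu$. Since the hypothesis $|\xi|\ge 1$ forces $\mu\ge 1$, the pointwise bound $|\widehat{V}(\xi)|\les|\xi|^{-\gam}$ on $|\xi|\sim\mu$, combined with Plancherel and Cauchy--Schwarz in $L^2_{t,x}$, reduces the first estimate to
\[
\Big|\iint V*B(\psi_1,\psi_2)\cdot Q(\psi_3,\psi_4)\,dxdt\Big|\les\sum_{1\le\mu\les\lam_2}\mu^{-\gam}\|P_\mu B(\psi_1,\psi_2)\|\,\|\widetilde{P_\mu}Q(\psi_3,\psi_4)\|,
\]
where the range $\mu\les\lam_2$ is enforced by the frequency support of $B(\psi_1,\psi_2)$, which lives in $|\xi|\les\lam_2$.

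Next I would control the two bilinear $L^2$ factors. For $\|P_\mu B(\psi_1,\psi_2)\|$, the endpoint estimate \eqref{endpoint} of Lemma~\ref{bilinear} at $s=0$ gives $\les\|\psi_1\|_{U_{\pm_1}^2}\|\psi_2\|_{U_{\pm_2}^2}$, since the prefactor $\lam_1^{1-s}\lam_2^s/\lammin$ reduces to $1$ at $s=0$ and $\lammin=\lam_1$. For $\|\widetilde{P_\mu}Q(\psi_3,\psi_4)\|$, the configuration $\lam_3\sim\lam_4\gg\mu$ with $\pm_3=+\ne-=\pm_4$ places us in the non-transversal case of Lemma~\ref{null-esti}(ii), yielding the key null-form gain $\les\mu^{1/2}\|\psi_3\|_{U_+^2}\|\psi_4\|_{U_-^2}$. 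To accommodate the fact that only $\psi_4\in V_-^2$ is available, I would apply the logarithmic interpolation Lemma~\ref{log}: decomposing $\psi_4=u+w$ with $u\in U_-^2$ and $w\in U_-^p$ for some $p>2$, I would bound $Q(\psi_3,u)$ by Lemma~\ref{null-esti}(ii) and $Q(\psi_3,w)$ by the first inequality of Lemma~\ref{bilinear} with $r=p$, and balance the parameter $M$ so that the resulting logarithmic overhead is absorbed into a small power $\lam_2^{\de/2}$, producing an overall estimate of the form $\mu^{1/2}\lam_2^{\de/2}\|\psi_3\|_{U_+^2}\|\psi_4\|_{V_-^2}$.

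Finally, since $\gam>1/2$, the dyadic sum $\sum_{\mu\ge 1}\mu^{-\gam+\frac12}$ converges, and so $|J^h|\les\lam_2^{\de}\prod_{j=1}^3\|\psi_j\|_{U_{\pm_j}^2}\|\psi_4\|_{V_{\pm_4}^2}$ as claimed in \eqref{j3-esti}. The second estimate \eqref{j4-esti} is treated in parallel: the role of $B(\psi_1,\psi_2)$ is now played by $Q(\psi_1,\psi_2)$, which is controlled by Lemma~\ref{null-esti}(i) when $\pm_1=\pm_2$ (gaining $\mu\lam_1^{-1/2}$ for $\lam_1\sim\lam_2$ and $\lammin^{1/2}$ for $\lam_1\nsim\lam_2$) or by Lemma~\ref{null-esti}(ii) when $\pm_1\ne\pm_2$; in either branch, combined with the $\mu^{1/2}$-gain from $Q(\psi_3,\psi_4)$ and the $\mu^{-\gam}$ decay of $\widehat V$, the dyadic sum in $\mu$ up to $\lam_2$ remains convergent or at most $\lam_2^{\de}$ thanks to $\gam>1$. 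The main technical obstacle throughout is the passage from $U^2$ to $V^2$ at the $\psi_4$-slot, which introduces a logarithmic cost through Lemma~\ref{log}; the delicate point is to tune $M$ so that this logarithm is folded into $\lam_2^{\de}$ rather than a power of the much larger scale $\lam_3$, which is ultimately possible because the estimate is allowed to have a constant $C_\de$ depending on the arbitrarily small $\de>0$.
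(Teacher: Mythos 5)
Your plan never confronts the specific difficulty that this lemma exists to handle, and the step where you claim to resolve it fails. In the regime $\lam_2\ll\lam_3\sim\lam_4$ with $\pm_3=+$, $\pm_4=-$, the only $\mu$-localized null-form gain available from Lemma \ref{null-esti}(ii) is $\mu^{\frac12}$, with \emph{no} negative power of $\lam_3$, and it requires $U^2$ norms in both slots. Since $\psi_4$ is only given in $V_{-}^2$, you must pass through Lemma \ref{log}, paying a factor $M$ on the $U^2$ piece against $e^{-M}$ on the $U^p$ piece; as the $U^p$ piece is controlled only through the first inequality of Lemma \ref{bilinear}, which costs a positive power of $\lam_3$, the optimal choice of $M$ forces a loss of order $\log\lam_3$ multiplying your $U^2$-based bound. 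That logarithm cannot be ``folded into $\lam_2^{\de}$'': $\lam_3/\lam_2$ is unbounded (take $\lam_2=1$, $\lam_3\to\infty$), so no constant $C_\de$ rescues the claimed bound. This is precisely why the paper's proof begins with a modulation decomposition $\psi_j=\Lambda^{\pm_j}_{<\lam_3/8}\psi_j+\Lambda^{\pm_j}_{\ge\lam_3/8}\psi_j$: with $\pm_1=\pm_3=+$, $\pm_4=-$ and $\lam_1\le\lam_2\ll\lam_3\sim\lam_4$, the convolution constraints $\tau_1-\tau_2+\tau_3-\tau_4=0$, $\xi_1-\xi_2+\xi_3-\xi_4=0$ make the resonance function of size $\sim\lam_3$, so the all-low-modulation contribution vanishes, and the factor carrying high modulation gains $\lam_3^{-\frac12}$ by Lemma \ref{modul}. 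It is exactly this negative power of $\lam_3$ in the $U^2$-level estimate that absorbs the logarithmic loss of Lemma \ref{log} and lets the final bound close with only $\lam_2^{\de}$. (Note also that for $\pm_4=+$ the null form itself supplies $\mu\lam_3^{-\frac12}$, which is why the $\pm_4=-$ case is singled out here; your argument does not use $\pm_4=-$ beyond quoting the $\mu^{\frac12}$ bound, which is a sign the key mechanism is missing.)

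A secondary but compounding problem is your symbol bound: in two dimensions $\widehat{|x|^{-\gam}}(\xi)\sim|\xi|^{-(2-\gam)}$, not $|\xi|^{-\gam}$, and the paper accordingly uses the weight $\mu^{-2+\gam}$ throughout. With the correct exponent your dyadic sums do not close as claimed even at the $U^2$ level: for \eqref{j3-esti} one gets $\sum_{1\le\mu\les\lam_2}\mu^{-(2-\gam)}\mu^{\frac12}\sim\lam_2^{\gam-\frac32}$ when $\gam>\frac32$, and for \eqref{j4-esti} a similar computation yields powers like $\lam_2^{\gam-1}$, neither of which is $\les\lam_2^{\de}$ for arbitrary $\de>0$. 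So even setting aside the $V^2$ issue, your bookkeeping (``converges since $\gam>\frac12$'', ``thanks to $\gam>1$'') rests on the wrong decay rate; recovering the stated $\lam_2^{\de}$ requires the extra $\lam_3^{-\frac12}$ (and the Bernstein-type redistribution of small powers between $\mu$ and $\lam_2$) that the paper's modulation argument provides.
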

\begin{proof}
To prove  \eqref{j3-esti} and \eqref{j4-esti} we decompose $\psi_j$ into  low and high modulation parts
$$
\psi_j = \Lambda_{<\frac{\lam_3}8}^{\pm_j} \psi_j + \Lambda_{\ge\frac{\lam_3}8}^{\pm_j} \psi_j =: \psi_j^l + \psi_j^h.
$$
 Since $\tau_1 - \tau_2 + \tau_3 -\tau_4 =0$ and $\xi_1 - \xi_2 + \xi_3 -\xi_4 =0$, using $\pm_1 = \pm_3 = +,\pm_4 =-$, we have
$$
\left| \iint V* Y(\psi_1^l,\psi_2^l)\cdot Q(\psi_3^l,\psi_4^l)  dxdt  \right| = 0,
$$
where $Y$ is the one of $B$ or $Q$. Therefore we do not consider the case $\psi_j = \psi_j^l$ for all $j = 1,\cdots,4$.

We first prove the \eqref{j3-esti}. By Bernstein's and H\"older's inequalities and Lemma \ref{null-esti}, we have
\begin{align*}
\left| \iint V* B(\psi_1^h,\psi_2)\cdot Q(\psi_3,\psi_4)  dxdt  \right| &\les \sum_{\mu \ge 1}\left\|\pmu \left(V*B(\psi_1^h,\psi_2)\right) \right\|_{L_t^2 L_x^{\frac1{1-\ve}}}  \left\| \widetilde{\pmu}Q(\psi_3,\psi_4)\right\|_{L_t^2L_x^{\frac1\ve}}\\
&\les  \sum_{\mu \ge 1} \mu ^{-1 +\gam -2\ve}\|\psi_1^h\|_{L_{t,x}^2}   \|\psi_2\|_{L_t^\infty L_x^\frac2{1-2\ve}}  \left\| \widetilde{\pmu}Q(\psi_3,\psi_4)\right\|_{L_{t,x}^2}\\
&\les  \sum_{\mu \ge 1}  \mu^{-\frac12+\gam -\de-2\ve} \lam_2^{\de +2\ve} \lam_3^{-\frac12} \prod_{j=1}^3\|\psi_j\|_{U_{\pm_j}^2}\|\psi_4\|_{U_{\pm_4}^2}\\
&\les  \lam_2^{\de} \lam_3^{-\frac{\de+1 -\gam}2}  \prod_{j=1}^3\|\psi_j\|_{U_{\pm_j}^2}\|\psi_4\|_{U_{\pm_4}^2}
\end{align*}
for some $0< \ve \ll1$. By \eqref{endpoint}, we also obtain
\begin{align*}
\left| \iint V* B(\psi_1^h,\psi_2)\cdot Q(\psi_3,\psi_4)  dxdt  \right| &\les \sum_{\mu \ge 1}\left\|\pmu \left(V*B(\psi_1^h,\psi_2)\right) \right\|  \left\| \widetilde{\pmu}Q(\psi_3,\psi_4)\right\|\\
&\les \sum_{\mu \ge 1}  \mu^{-2+\gam}   \lam_3  \prod_{j=1}^3\|\psi_j\|_{U_{\pm_j}^2}\|\psi_4\|_{U_{\pm_4}^4}\\
&\les  \lam_3  \prod_{j=1}^3\|\psi_j\|_{U_{\pm_j}^2}\|\psi_4\|_{U_{\pm_4}^4}.
\end{align*}
Applying Lemma \ref{log} to the above two estimates, we get
$$
\left| \iint V* B(\psi_1^h,\psi_2)\cdot Q(\psi_3,\psi_4)  dxdt  \right| \les  \lam_2^{\de}   \prod_{j=1}^3\|\psi_j\|_{U_{\pm_j}^2}\|\psi_4\|_{V_{\pm_4}^2}.
$$
If $\psi_2 = \psi_2^h$, we may obtain the desired result in a similar manner.

If $\psi_3 = \psi_3^h$, then we get
\begin{align*}
\left| \iint V* B(\psi_1,\psi_2)\cdot Q(\psi_3^h,\psi_4)  dxdt  \right| &\les \sum_{\mu \ge 1}\left\|\pmu \left(V*B(\psi_1,\psi_2)\right) \right\|_{L_t^2 L_x^\infty}  \left\| \widetilde{\pmu}Q(\psi_3^h,\psi_4)\right\|_{L_t^2L_x^1}\\
&\les  \sum_{\mu \ge 1} \mu^{-1+\gam}\left\| \pmu B(\psi_1,\psi_2)\right\|_{L_{t,x}^2}  \|\psi_3^h\|_{L_{t,x}^2}   \|\psi_4\|_{L_t^\infty L_x^2} \\
&\les  \sum_{\mu \ge 1}  \mu^{-1+\gam}  \lam_3^{-\frac12} \prod_{j=1}^3\|\psi_j\|_{U_{\pm_j}^2}\|\psi_4\|_{V_{\pm_4}^2}\\
&\les  \lam_2^{\de} \sum_{\mu \ge 1} \mu^{-\frac32+\gam -\de}  \prod_{j=1}^3\|\psi_j\|_{U_{\pm_j}^2}\|\psi_4\|_{V_{\pm_4}^2}.
\end{align*}
The case $\psi_4 = \psi_4^h$ can also be treated similarly.

We next consider the \eqref{j4-esti}. The proof of \eqref{j4-esti} is very similar to the one of \eqref{j3-esti} except changing $B(\psi_1, \psi_2)$ with $Q(\psi_1,\psi_2)$. For the convenience of readers we brief on the details. By the consecutive use of Bernstein's inequality, Lemma \ref{null-esti}, and Lemma \ref{modul} we have
\begin{align}
\begin{aligned}\label{qq-u2}
\left| \iint V* Q(\psi_1^h,\psi_2)\cdot Q(\psi_3,\psi_4)  dxdt  \right| &\les \sum_{\mu \ge 1}\left\|\pmu \left(V*Q(\psi_1^h,\psi_2)\right) \right\|_{L_t^2 L_x^{\frac1{1-\ve}}}  \left\| \widetilde{\pmu}Q(\psi_3,\psi_4)\right\|_{L_t^2L_x^{\frac1\ve}}\\
&\les  \sum_{\mu \ge 1} \mu^{-1+\gam -2\ve}  \|\psi_1^h\|_{L_{t,x}^2}   \|\psi_2\|_{L_t^\infty L_x^{\frac2{1-2\ve}}}  \left\| \widetilde{\pmu}Q(\psi_3,\psi_4)\right\|_{L_{t,x}^2}\\
&\les  \sum_{\mu \ge 1}  \mu^{-\frac12+\gam-\de-2\ve} \lam_2^{\de + 2\ve} \lam_3^{-\frac12} \prod_{j=1}^3\|\psi_j\|_{U_{\pm_j}^2}\|\psi_4\|_{U_{\pm_4}^2}\\
&\les  \lam_2^{\de} \lam_3^{-\frac{\de+1 -\gam}2}  \prod_{j=1}^3\|\psi_j\|_{U_{\pm_j}^2}\|\psi_4\|_{U_{\pm_4}^2}
\end{aligned}
\end{align}
for some $0 <\ve \ll 1$. In a similar way we get
\begin{align}
\begin{aligned}\label{qq-u4}
\left| \iint V* Q(\psi_1^h,\psi_2)\cdot Q(\psi_3,\psi_4)  dxdt  \right|  &\les  \sum_{\mu \ge 1}  \mu^{-1+\gam -2\ve} \lam_2^{2\ve}\|\psi_1^h\|_{L_{t,x}^2}   \|\psi_2\|_{L_t^\infty L_x^2}  \left\| \widetilde{\pmu}Q(\psi_3,\psi_4)\right\|_{L_{t,x}^2}\\
&\les \sum_{\mu \ge 1} \mu^{-1+\gam - 2\ve}    \lam_3^{\frac12 + 2\ve}  \prod_{j=1}^3\|\psi_j\|_{U_{\pm_j}^2}\|\psi_4\|_{U_{\pm_4}^4}\\
&\les   \lam_3^{\frac12 + 2\ve}  \prod_{j=1}^3\|\psi_j\|_{U_{\pm_j}^2}\|\psi_4\|_{U_{\pm_4}^4}.
\end{aligned}
\end{align}
The logarithm interpolation(Lemma \ref{log}) between \eqref{qq-u2} and \eqref{qq-u4} gives us that
$$
\left| \iint V* Q(\psi_1^h,\psi_2)\cdot Q(\psi_3,\psi_4)  dxdt  \right| \les  \lam_2^{\de}   \prod_{j=1}^3\|\psi_j\|_{U_{\pm_j}^2}\|\psi_4\|_{V_{\pm_4}^2}.
$$
The case $\psi_2 = \psi_2^h$ can be estimated similarly.

We now treat the case $\psi_3 = \psi_3^h$ as follows.
\begin{align*}
\left| \iint V* Q(\psi_1,\psi_2)\cdot Q(\psi_3^h,\psi_4)  dxdt  \right| &\les \sum_{\mu \ge 1}\left\|\pmu \left(V*Q(\psi_1,\psi_2)\right) \right\|_{L_t^2 L_x^\infty}  \left\| \widetilde{\pmu}Q(\psi_3^h,\psi_4)\right\|_{L_t^2L_x^1}\\
&\les  \sum_{\mu \ge 1} \mu^{-1+\gam} \left\| \pmu Q(\psi_1,\psi_2)\right\|_{L_{t,x}^2}  \|\psi_3^h\|_{L_{t,x}^2}   \|\psi_4\|_{L_t^\infty L_x^2} \\
&\les  \sum_{\mu \ge 1}  \mu^{-\frac12+\gam}    \lam_3^{-\frac12} \prod_{j=1}^3\|\psi_j\|_{U_{\pm_j}^2}\|\psi_4\|_{V_{\pm_4}^2}\\
&\les  \lam_2^{\de} \sum_{\mu \ge 1} \mu^{-1 +\gam -\de}  \prod_{j=1}^3\|\psi_j\|_{U_{\pm_j}^2}\|\psi_4\|_{V_{\pm_4}^2}.
\end{align*}
This completes the proof of \eqref{j4-esti}.
\end{proof}

\subsection{High-frequency parts}
For the high-frequency parts we utilize the null-form and bilinear estimates. To this end we further decompose the high-frequency parts of nonlinear term into bilinear terms as follows:

\begin{align*}
J^h &= \sum_{\mu\ge1}\left| P_\mu \left(\iint V*\left< \psi_1, \beta\psi_2\right>\right)\cdot  \left( \widetilde{P_\mu} \left<\beta\psi_3, \psi_4\right>  \right) dxdt  \right|\\
&\les J_1^h + J_2^h + J_3^h + J_4^h,
\end{align*}
where
\begin{align*}
J_1^h = \sum_{\mu\ge1} \left| \iint V* B(\psi_1,\psi_2)\cdot B(\psi_3,\psi_4)  dxdt\right|,\\
J_2^h = \sum_{\mu\ge1} \left| \iint V* Q(\psi_1,\psi_2)\cdot B(\psi_3,\psi_4)  dxdt\right|,\\
J_3^h = \sum_{\mu\ge1} \left| \iint V* B(\psi_1,\psi_2)\cdot Q(\psi_3,\psi_4)  dxdt\right|,\\
J_4^h = \sum_{\mu\ge1} \left| \iint V* Q(\psi_1,\psi_2)\cdot Q(\psi_3,\psi_4)  dxdt\right|.
\end{align*}

\subsubsection{ Estimates for $J_1^h$} By Lemma \ref{bilinear} and H\"older's inequality we get
\begin{align*}
J_1^h  &\les \sum_{\mu \ge 1}\mu^{-2+\gam}\left\|\pmu B(\psi_1,\psi_2)\right\| \left\| \widetilde{\pmu}B(\psi_3,\psi_4)\right\|   \les \sum_{\mu \ge 1}\mu^{-2+\gam} \left( \frac{\lam_2}{\lam_1}\right)^{\frac{2-\gam}4} \left( \frac{\lam_4}{\lam_3}\right)^{\frac{2-\gam}4} \prod_{j=1}^3\|\psi_j\|_{U_{\pm_j}^2}\|\psi_4\|_{V_{\pm_4}^2}\\
      &\les  \prod_{j=1}^3\|\psi_j\|_{U_{\pm_j}^2}\|\psi_4\|_{V_{\pm_4}^2}.
\end{align*}

\subsubsection{Estimates for $J_2^h$} It follows from Lemma \ref{null-esti} that
\begin{align*}
J_2^h &\les \sum_{\mu \ge 1}\mu^{-2+\gam}\left\|\pmu Q(\psi_1,\psi_2)\right\| \left\| \widetilde{\pmu}B(\psi_3,\psi_4)\right\|  \les \sum_{ \mu \ge 1} \mu^{-\frac32+\gam}  \left( \frac{\lam_4}{\lam_3}\right)^{\frac14} \prod_{j=1}^3\|\psi_j\|_{U_{\pm_j}^2}\|\psi_4\|_{V_{\pm_4}^2}\\
      &\les (\lammed)^{\de} \sum_{ \mu \ge 1} \mu^{-\frac32+\gam -\de}   \prod_{j=1}^3\|\psi_j\|_{U_{\pm_j}^2}\|\psi_4\|_{V_{\pm_4}^2} \les   (\lammed)^{\de} \prod_{j=1}^3\|\psi_j\|_{U_{\pm_j}^2}\|\psi_4\|_{V_{\pm_4}^2}.
\end{align*}
\subsubsection{Estimates for $J_3^h$} Let us consider the subcases: (i) $\lam_3 \ll \lam_4$, (ii) $\lam_3 \sim \lam_4 \les \lam_2$, and (iii) $\lam_2 \ll \lam_3 \sim \lam_4$.
\paragraph{(i) The case: $\lam_3 \ll \lam_4$}

By Lemma \ref{bilinear} we get
\begin{align*}
J_3^h &\les \sum_{\mu \ge 1}\mu^{-2+\gam}\left\|\pmu B(\psi_1,\psi_2)\right\| \left\| \widetilde{\pmu}Q(\psi_3,\psi_4)\right\| \les \sum_{\mu \ge 1}\mu^{-2+\gam} \left( \frac{\lam_2}{\lam_1}\right)^{\frac{2-\gam}4} \lam_3^{\frac{\de}2} \lam_4^{\frac{2-\gam}4}\prod_{j=1}^3\|\psi_j\|_{U_{\pm_j}^2}\|\psi_4\|_{V_{\pm_4}^2}\\
      &\les (\lam_3)^{\de}\prod_{j=1}^3\|\psi_j\|_{U_{\pm_j}^2}\|\psi_4\|_{V_{\pm_4}^2}    \les (\lammed)^{\de}\prod_{j=1}^3\|\psi_j\|_{U_{\pm_j}^2}\|\psi_4\|_{V_{\pm_4}^2}.
\end{align*}

\paragraph{(ii) The case: $\lam_3 \sim \lam_4 \les \lam_2$}
By Lemmas \ref{null-esti} and \ref{bilinear} we have
\begin{align}
\begin{aligned}\label{j3-u2}
J_3^h  &\les \sum_{\mu \ge 1}\mu^{-2+\gam}\left\|\pmu B(\psi_1,\psi_2)\right\| \left\| \widetilde{\pmu}Q(\psi_3,\psi_4)\right\|  \les  \sum_{\mu \ge 1}\mu^{-\frac32+\gam} \left( \frac{\lam_2}{\lam_1}\right)^{\frac{1}4} \prod_{j=1}^3\|\psi_j\|_{U_{\pm_j}^2}\|\psi_4\|_{U_{\pm_4}^2}\\
&\les (\lammed)^{\de} \sum_{\mu \ge 1}\mu^{-\frac54+\gam - \de}  \prod_{j=1}^3\|\psi_j\|_{U_{\pm_j}^2}\|\psi_4\|_{U_{\pm_4}^2}  \les   (\lammed)^{\de}\prod_{j=1}^3\|\psi_j\|_{U_{\pm_j}^2}\|\psi_4\|_{U_{\pm_4}^2}
\end{aligned}
\end{align}
and
\begin{align}
\begin{aligned}\label{j3-u4}
J_3^h &\les \sum_{\mu \ge1}\mu^{-1} \left( \frac{\lam_2}{\lam_1}\right)^{\de} \lam_3^\frac12 \lam_4^\frac12\prod_{j=1}^3\|\psi_j\|_{U_{\pm_j}^2}\|\psi_4\|_{U_{\pm_4}^4} \les \lam_3\prod_{j=1}^3\|\psi_j\|_{U_{\pm_j}^2}\|\psi_4\|_{U_{\pm_4}^4}.
\end{aligned}
\end{align}
Using logarithm interpolation(Lemma \ref{log}) between \eqref{j3-u2} and \eqref{j3-u4}, we get
$$
J_3^h \les (\lam_3)^{\de}\prod_{j=1}^3\|\psi_j\|_{U_{\pm_j}^2}\|\psi_4\|_{V_{\pm_4}^2}.
$$
\paragraph{(iii) The case: $\lam_2 \ll \lam_3 \sim \lam_4$} Since we have \eqref{j3-esti} with $\pm_4 =-$,  we  deal only with the case $\pm_4 = + $. By Lemmas \ref{null-esti} and \ref{bilinear} we get
\begin{align*}
J_3^h &\les \sum_{\mu \ge 1} \mu^{-2+\gam}  \left( \frac{\lam_2}{\lam_1}\right)^{\varepsilon} \mu \lam_3^{-\frac12}   \prod_{j=1}^3\|\psi_j\|_{U_{\pm_j}^2}\|\psi_4\|_{U_{\pm_4}^2} \les  \lam_3^{-\frac14} \sum_{\mu \ge 1} \mu^{-1+\gam}\lam_2^{-\de +\varepsilon} \lam_2^{\de } \mu^{-\frac14}     \prod_{j=1}^3\|\psi_j\|_{U_{\pm_j}^2}\|\psi_4\|_{U_{\pm_4}^2} \\
&\les \lam_2^{\de} \lam_3 ^{-\frac14}    \sum_{\mu \ge 1} \mu^{-\frac54+\gam - \de + \varepsilon}    \prod_{j=1}^3\|\psi_j\|_{U_{\pm_j}^2}\|\psi_4\|_{U_{\pm_4}^2} \les  \lam_2^{\de} \lam_3 ^{-\frac14} \prod_{j=1}^3\|\psi_j\|_{U_{\pm_j}^2}\|\psi_4\|_{U_{\pm_4}^2}
\end{align*}
for $0< \varepsilon < \min\{\de , \frac14\}$. In the same way as in \eqref{j3-u4} we also have
\begin{align*}
J_3^h &\les \lam_3  \prod_{j=1}^3\|\psi_j\|_{U_{\pm_j}^2}\|\psi_4\|_{U_{\pm_4}^4}.
\end{align*}
The logarithm interpolation(Lemma \ref{log}) yields that
$$
J_3^h \les (\lammed)^{\de}\prod_{j=1}^3\|\psi_j\|_{U_{\pm_j}^2}\|\psi_4\|_{V_{\pm_4}^2}.
$$

\subsubsection{Estimates for $J_4^h$} We consider the subcases: (i) $ \lam_4 \les \lam_2$ and (ii) $\lam_2 \ll \lam_4$.
\medskip
\paragraph{(i) The case: $\lam_4 \les \lam_2$} By H\"older's inequality and Lemma \ref{null-esti} we obtain
\begin{align*}
J_4^h &\les \sum_{\mu \ge 1}\mu^{-2 + \gam}\left\|\pmu Q(\psi_1,\psi_2)\right\| \left\| \widetilde{\pmu}Q(\psi_3,\psi_4)\right\| \les \sum_{\mu \ge 1} \mu^{-1 + \gam} \prod_{j=1}^3\|\psi_j\|_{U_{\pm_j}^2}\|\psi_4\|_{U_{\pm_4}^2}\\
&\les   \sum_{\mu \ge 1} \mu^{-1 + \gam - \frac{\de + \gam-1}2} \lammed^{\frac{\de + \gam-1}2}   \prod_{j=1}^3\|\psi_j\|_{U_{\pm_j}^2}\|\psi_4\|_{U_{\pm_4}^2} \les  (\lammed)^{\frac{\de + \gam-1}2}     \prod_{j=1}^3\|\psi_j\|_{U_{\pm_j}^2}\|\psi_4\|_{U_{\pm_4}^2}
\end{align*}
and by Lemma \ref{bilinear}
\begin{align*}
J_4^h &\les \sum_{\mu \ge 1} \mu^{-2 + \gam} \mu^{\frac12}\lam_3^{1-\de}\lam_4^\de \prod_{j=1}^3\|\psi_j\|_{U_{\pm_j}^2}\|\psi_4\|_{U_{\pm_4}^4}  \les (\lammed)^{\frac32}     \prod_{j=1}^3\|\psi_j\|_{U_{\pm_j}^2}\|\psi_4\|_{U_{\pm_4}^4}.
\end{align*}
By logarithm interpolation(Lemma \ref{log}) we get the desired estimates.
\medskip

\paragraph{(ii) The case: $\lam_2 \ll \lam_4$} We may consider subcases: (a) $\lam_3 \ll \lam_4 \sim \mu$ and (b) $\mu \ll \lam_3 \sim \lam_4$. But since $\lam_2 \ll \lam_4$, the case (a) does not occur. Hence we have only to consider the case (b). Due to \eqref{j4-esti} it remains to prove the estimates for the case $\pm_4 =+$. By Lemma \ref{null-esti} we get
\begin{align*}
J_4^h &\les \sum_{\mu \ge 1}   \mu^{-2+\gam} \mu^{\frac32} \lam_3^{-\frac12}      \prod_{j=1}^3\|\psi_j\|_{U_{\pm_j}^2}\|\psi_4\|_{U_{\pm_4}^2} \les  (\lammed)^{\de-\varepsilon} \lam_3^{-\varepsilon}  \sum_{\mu \ge 1}   \mu^{-1+\gam -\de +2\varepsilon}       \prod_{j=1}^3\|\psi_j\|_{U_{\pm_j}^2}\|\psi_4\|_{U_{\pm_4}^2} \\
      &\les (\lammed)^{\de-\varepsilon}    \lam_3^{-\varepsilon}   \prod_{j=1}^3\|\psi_j\|_{U_{\pm_j}^2}\|\psi_4\|_{U_{\pm_4}^2}
\end{align*}
for $0< \varepsilon <\frac{\de -\gam +1}{2}$. By Lemma \ref{bilinear} we also get
\begin{align*}
J_4^h &\les \sum_{\mu \ge 1}   \mu^{-2+\gam}  (\lam_1 \lam_2 \lam_3 \lam_4)^{\frac12}     \prod_{j=1}^3\|\psi_j\|_{U_{\pm_j}^2}\|\psi_4\|_{U_{\pm_4}^4} \les \lammed \lam_3    \prod_{j=1}^3\|\psi_j\|_{U_{\pm_j}^2}\|\psi_4\|_{U_{\pm_4}^4}.
\end{align*}
Then Lemma \ref{log} gives us the desired estimates.

%%%%%%%%%%%%%%%%%%%%%%%%%%%%%%%%%%%%%%%%%%%%%%%%%%%%%%%%%%%%%%%%%%%%%%%%%%%%%%%%%%%%%%%%%%%%%%%%%%%%%%%%%%%%%%%%%%%%%%%%%%%%%%%%%%%%%%%%%%%%%%%%%%%%%%%%%%%%%%%%%%%%%%%%%%%%%%%%%%%%%%%%%%%%%%%%%%%%%%%%%%%%%%%%%%%%%%%%%%%%%%%%%%%%%%%%%%%%%%%%%%%%%%%%%%%%%%%%%%%%%%%%%%%%%%%%%%%%%%%%%%%%%%%%%%%%%%%%%%%%%%%%%%%%%%%%%%%%%%%%%%%%%%%%%%%%%%%%%%%%%%%%%%%%%%%%%%%%%%%%%%%%%%%%%%%%%%
\section{Nonexistence of scattering}\label{sec-nonsca}

In this section we prove Theorem \ref{non-scat}. Since the proof for backward scattering is similar to the forward scattering, we only consider the forward scattering. We proceed by contradiction, assuming that $\|\psii(0)\|_{L_x^2} > 0$.

Let us define functionals $H(t), H_\varepsilon (t) (\varepsilon > 0)$ by
	\begin{align*}
	H(t) = {\rm sgn}(c) {\rm Im} \left< \psi, \psii \right>_{L_x^2},\;\;H_\varepsilon(t) = {\rm sgn}(c) {\rm Im} \left< (1-\varepsilon\Delta)^{-1}\psi,  (1-\varepsilon\Delta)^{-1}\psii \right>_{L_x^2}.
	\end{align*}
It is clear that $H(t)$ is uniformly bounded from the mass conservation and that for each $t > 0$, $\lim_{\varepsilon \to 0}H_\varepsilon(t) = H(t)$.
Now by taking the time derivative and using the self-adjointness of Dirac operator we have
	\begin{align*}
&	\frac{d}{dt}H_\varepsilon(t)\\
 &= {\rm sgn}(c) {\rm Im} \left( \left< \partial_t(1-\varepsilon\Delta)^{-1}\psi, (1-\varepsilon\Delta)^{-1}\psii \right>_{L_x^2} +  \left<(1-\varepsilon\Delta)^{-1} \psi, \partial_t(1-\varepsilon\Delta)^{-1}\psii \right>_{L_x^2} \right)\\
	&= {\rm sgn} (c) {\rm Im} \left(  \left< -i(\al\cdot D + \beta)\psi + i V*\left<\psi, \beta \psi\right> \beta \psi, (1-\varepsilon\Delta)^{-2}\psii \right>_{L_x^2}   + \left< \psi, -i(\al\cdot D + \beta)(1-\varepsilon\Delta)^{-2}\psii \right>_{L_x^2}    \right)\\
	&= {\rm sgn}(c) {\rm Im} \left(  i \left<  V * \left<\psi, \beta \psi\right> \beta \psi, (1-\varepsilon\Delta)^{-2} \psii \right>_{L_x^2}       \right)\\
	&= |c| {\rm Re} \left<  |\cdot |^{-\gamma}*\left<\psi, \beta \psi\right> \beta \psi, (1-\varepsilon\Delta)^{-2} \psii \right>_{L_x^2}.
		\end{align*}
Integrating over $[t_*, t^*]$, we get
$$
H_\varepsilon(t^*) - H_\varepsilon(t_*) = |c| {\rm Re} \int_{t_*}^{t^*} \left<  |\cdot |^{-\gamma}*\left<\psi, \beta \psi\right> \beta \psi, (1-\varepsilon\Delta)^{-2} \psii \right>_{L_x^2}\,dt,
$$
If $\psi \in C(\mathbb R; H^\frac\gamma2)$, then Hardy-Sobolev inequality yields $|\cdot|^{-\gamma}*\left<\psi, \beta \psi\right> \beta \psi \in C([t_*, t^*]; L_x^2)$ and thus by taking the limit as $\varepsilon \to 0$ we finally get
$$
H(t^*) - H(t_*) =  \int_{t_*}^{t^*}Y(t)\,dt,
$$
where $Y(t) = |c| {\rm Re} \left<  |\cdot |^{-\gamma}*\left<\psi, \beta \psi\right> \beta \psi,  \psii \right>_{L_x^2}$.

We will show that
\begin{align}\label{lowerbound}
|Y(t)| \gtrsim t^{-\gamma}
\end{align}
for sufficiently large $t$ when $\|\psii\|_{L_x^2} > 0$.
Once \eqref{lowerbound} has been shown, since the sign of $Y$ is fixed for large time and $0< \gamma \le 1$, \eqref{lowerbound} leads us to a contradiction to the fact that $H(t)$ is uniformly bounded on time and hence completes the proof of Theorem \ref{non-scat}.

From now on we focus on the proof of \eqref{lowerbound}. To do so we reconstitute $Y$ as follows:
\begin{align*}
Y&= Y_1 + Y_2 + Y_3,\\	
Y_1 &= |c|  \mathcal V(\psii, \beta\psii),\\
	Y_2 &= |c|  {\rm Re} \left<   |\cdot|^{-\gamma}*\left(\left<\psi, \beta \psi\right> - \left<\psii, \beta \psii\right>\right) \beta \psii, \psii \right>_{L_x^2},\\
	Y_3 &= |c|  {\rm Re} \left<   |\cdot|^{-\gamma}*\left<\psi, \beta \psi\right> \beta (\psi -\psii), \psii \right>_{L_x^2}.
	\end{align*}

	We first deal with the $Y_1$.  By the assumption \eqref{assu-scat} we have that for any $t > t_*$
	\begin{align}
	\begin{aligned}\label{y1-esti}
	|Y_1| &= |c| |\mathcal V(\psii, \beta\psii)(t)| \ge \theta |c|\mathcal V(\psii, \psii)\\
	&= \theta|c| \int\Big( |\cdot|^{-\gamma}* |\psii|^2\Big)(t, x)|\psii(t, x)|^2dx\\
	&\ge \theta|c|  (4At)^{-\gam}   \left(\int   \rho\left(\frac{x}{At}\right) \left|\psii(t, x)\right|^2 dx\right)^2.
\end{aligned}
	\end{align}
	%Since one can be obtained estimate for $\psiit$ by the same way, we only treat the term $\|\psiio\|_{L_x^2(|x|\le At)}^2$.
	Here $\rho$ is the same cut-off function defined in the Section \ref{low-esti}. Let us set $\psii(0) = \vps$ and for the simplicity sake let us denote $\Pi_{\pm}(D)\vps$ by $\vp_\pm$. Then  $\psii = \epd \vp_+ - \emd \vp_-$ and we obtain
	\begin{align*}
\int \rho\left(\frac{x}{At}\right)   \left|\psii(x)\right|^2 dx&= \int \rho\left(\frac{x}{At}\right) |\epd \vp_+ - \emd \vp_- |^2dx\\
	&= \int   \rho\left(\frac{x}{At}\right)  \left(|\epd \vp_+|^2  +    |\emd \vp_- |^2  -2  {\rm Re} \left<\epd \vp_+, \emd \vp_- \right> \right) dx.
	\end{align*}
	We handle the last term in the integrand as follows:
	\begin{align*}
	 \int \rho\left(\frac{x}{At} \right) \left<\epd \vp_+, \emd \vp_- \right>  dx &=  \left< \rho\left(\frac{x}{At} \right)\epd \vp_+, \emd\Pi_{-}(D) \vps\right>_{L_x^2}\\
	&=  \left<\Pi_{-}(D) \left(\rho\left(\frac{x}{At} \right)  \epd \vp_+ \right), \emd \vps\right>_{L_x^2}\\
	&=  \left<\rho\left(\frac{x}{At} \right)\epd \Pi_{-}(D) \Pi_+(D)\vps,  \emd \vps\right>_{L_x^2}\\
	&\qquad\;\; + \left<  \left[\Pi_{-}(D), \rho\left(\frac{x}{At} \right)\right]  \epd \vp_+, \emd \vps\right>_{L_x^2}\\
	&= - \frac12  \left<\left[\frac{\al\cdot D + \beta}{\left< D\right>},\rho\left(\frac{x}{At} \right)\right]\epd \vp_+,   \emd \vps\right>_{L_x^2}.
	\end{align*}
	Here $\left[A ,B \right] := AB - BA$. We used \eqref{proj-commu} for the last integral. Then by Plancherel's theorem we see that
	\begin{align*}
	\left|2{\rm Re}\int \left<\epd \vp_+ , \emd\vp_-\right> dx\right| &= {\rm Re} \left|\int \left<\left[\frac{\al\cdot D + \beta}{\left< D\right>},\rho\left(\frac{x}{At} \right)\right] \vp_+, \emd \vps\right> dx\right|\\
	&\les (At)^2 \iint \frac{|\xi-\eta|}{\left<\xi\right>} |\widehat{\rho}\left(At(\xi-\eta)\right)|  |\Pi_+(\eta)\widehat{\vps}(\eta)|    |\widehat{\vps}(-\xi)| d\eta d\xi\\
	&\les (At)^{-1} \|\vps\|_{L_x^2}^2.
	\end{align*}
	From this we can choose sufficiently large $t$ so that
	\begin{align}\label{era}
	\left|2{\rm Re}\int \left<\epd \vp_+ , \emd\vp_-\right> dx\right| \le \frac1{10}\|\vps\|_{L_x^2}.
	\end{align}
	
	Let us set $\vpsl := P_{\ka^{-1}<\cdot\le \ka}\left(\rho\left(\frac{\cdot}{r}\right) \vps\right)$, $\phi(\xi) := x\cdot\xi + t\left<\xi\right>$, and $L(\xi) := \nabla_{\xi} \left(\frac{\nabla_{\xi}\phi}{|\nabla_{\xi}\phi|^2}\right)$. If $|x| \ge At$, by using integration by parts twice we have
	\begin{align}
	\begin{aligned}\label{decay-space}
	&\left|\epd \Pi_{+}(D)\vpsl(x) \right|\\
	&=  C \left|\int \nabla_{\xi}\phi e^{i\phi} \frac{\nabla_{\xi}\phi}{|\nabla_{\xi}\phi|^2} \widehat{\vp_{\ka,r}^+}(\xi) d \xi \right|\\
	&= C  \left| \int  e^{i\phi} L(\xi) \widehat{\vp_{\ka,r}^+}(\xi) d \xi  + \int  e^{i\phi} \frac{\nabla_{\xi}\phi}{|\nabla_{\xi}\phi|^2} \nabla_{\xi} \widehat{\vp_{\ka,r}^+}(\xi) d \xi\right|\\
	&= C \left| \int  e^{i\phi} L(\xi)^2 \widehat{\vp_{\ka,r}^+}(\xi) + e^{i\phi} \nabla_{\xi} L(\xi) \frac{\nabla_{\xi}\phi}{|\nabla_{\xi}\phi|^2}  \widehat{\vp_{\ka,r}^+}(\xi) + e^{i\phi} L(\xi)^2 \nabla_{\xi}\widehat{\vp_{\ka,r}^+}(\xi) d \xi \right| \\
	&\qquad +  C\left|\int  e^{i\phi} \nabla_{\xi} \left(\frac{1}{|\nabla_{\xi}\phi|^2}\right) \nabla_{\xi} \widehat{\vp_{\ka,r}^+}(\xi)  +   e^{i\phi}  \frac{1}{|\nabla_{\xi}\phi|^2} \nabla_{\xi}^2 \widehat{\vp_{\ka,r}^+}(\xi) d \xi\right|\\
	&\le C(A,\ka,r)|x|^{-2}\|\vp_+\|_{L_x^2}.
\end{aligned}
	\end{align}
 Since for large $\ka$ and $r$, $\|\vps - \vpsl\| \le \frac1{10}\|\vps\|$, by \eqref{era} and \eqref{decay-space}, we have
	\begin{align*}
	\left\|\rho\left(\frac{\cdot}{At}\right) \psii\right\|_{L_x^2}^2 &\ge \left\|\rho\left(\frac{\cdot}{At}\right)  \epd \Pi_{+}(D)\vps \right\|_{L_x^2}^2 - \frac{1}{10}\|\vps\|_{L_x^2}^2\\
	&\ge \left\|\rho\left(\frac{\cdot}{At}\right) \epd \Pi_{+}(D)\vpsl \right\|_{L_x^2}^2 - \frac{11}{50}\|\vps\|_{L_x^2}^2 - \frac{1}{10}\|\vps\|_{L_x^2}^2\\
	&\ge \|\epd \Pi_{+}(D) \vpsl\|_{L_x^2}^2 - \int \left( 1- \rho\left(\frac{\cdot}{At}\right)  \right) |\epd\Pi_{+}(D) \vpsl|^2 dx - \frac{8}{25}\|\vps\|_{L_x^2}^2\\
	&\ge \frac{17}{25}\|\vps\|_{L_x^2}^2 - \frac9{10}C(M,r)\|\vps\|_{L_x^2}^2\int_{|x|\ge At} |x|^{-4}dx\\
	&\ge \frac35\|\vps\|_{L_x^2}^2.
	\end{align*}
	This together with \eqref{y1-esti} leads us to
	\begin{align}\label{y1}
	|Y_1|  \gtrsim |t|^{-\gam} .
	\end{align}

	Now let us turn to $Y_2,\;Y_3$. For this purpose we introduce time decay estimate for the linear solutions and $L_x^\infty$ estimates for the potential term.

	\begin{lem}[see Lemma 4.2 of \cite{choz}]\label{time-decay}
		Let $f \in B_{1,1}^2$. Then
		\begin{align*}
		\|\epm f\|_{L_x^\infty} \les t^{-1} \|f \|_{B_{1,1}^2}.
		\end{align*}
	\end{lem}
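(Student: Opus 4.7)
The plan is to establish the Klein--Gordon dispersive estimate frequency by frequency and then sum it against the Besov structure of $B^2_{1,1}$. More precisely, if I can show the dyadic bound
\begin{align*}
\|\epm P_\lam f\|_{L_x^\infty} \les t^{-1} \left<\lam \right>^{2} \|P_\lam f\|_{L_x^1}
\end{align*}
for every $\lam \in 2^{\mathbb Z}$, then summing over $\lam$ together with the defining series of $\|f\|_{B^2_{1,1}} = \sum_{\lam} \left<\lam\right>^{2}\|P_\lam f\|_{L_x^1}$ gives the claim.

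To get the dyadic bound, I would write $\epm P_\lam f = K_{\lam,\pm t} \ast f$, where
\begin{align*}
K_{\lam, t}(x) = \frac{1}{(2\pi)^2}\int_{\rt} e^{i(x\cdot \xi + t\left<\xi\right>)} \rho_\lam(\xi)\, d\xi,
\end{align*}
and apply Young's inequality to reduce matters to the pointwise kernel bound $\|K_{\lam, t}\|_{L_x^\infty} \les t^{-1}\left<\lam\right>^2$. The heart of the argument is a stationary phase analysis of the phase $\phi(\xi) = x\cdot \xi + t\left<\xi\right>$. Its gradient is $\nabla\phi = x + t\xi/\left<\xi\right>$, so a stationary point exists precisely when $|x| < t$ and is non-degenerate there, with Hessian determinant $\left<\xi\right>^{-4}$. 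The standard 2d stationary phase formula then yields the factor $t^{-1}\left<\xi\right>^{2} \sim t^{-1}\left<\lam\right>^{2}$, which is exactly the claimed weight. For $|x|$ much larger than $t$, the phase has no critical point and repeated integration by parts along the vector field $\nabla\phi/|\nabla\phi|^2$ yields arbitrarily fast decay in $|x|$, comfortably dominated by $t^{-1}\left<\lam\right>^2$.

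The main obstacle will be the transitional region near the light cone $|x|\sim t$, where the stationary point migrates toward the frequency boundary and a naive stationary phase bound degenerates. To handle this I would split into low-frequency ($\lam \les 1$) and high-frequency ($\lam \gg 1$) regimes. In the low-frequency regime $\left<\xi\right>\sim 1$ and the phase is smooth and uniformly non-degenerate; the standard 2d stationary phase gives $t^{-1}$ decay with a constant. In the high-frequency regime, after rescaling $\xi = \lam \eta$ the phase becomes $\lam x\cdot \eta + t\lam\left<\eta\right>_\lam$ on an $\eta$-annulus of size $1$, and a careful stationary/nonstationary phase analysis with the effective parameter $t\lam$ recovers a transversal $t^{-1}\lam$ dispersion together with an additional $\lam$ from the Jacobian of rescaling, matching $\langle\lam\rangle^2$. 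For finishing, I would cover the neighborhood of the light cone by a Van der Corput argument (using $|\phi''| \gtrsim \left<\lam\right>^{-2}$ in directions tangential to the level sets), which absorbs the boundary behavior into the same $t^{-1}\left<\lam\right>^{2}$ bound. Once the kernel estimate is established, the conclusion follows by Young's inequality and summation over $\lam$.
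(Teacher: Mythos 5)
Your proposal is correct and follows essentially the same route as the paper, which justifies this lemma only by citing Lemma 4.2 of Cho--Ozawa: the standard argument there is exactly your dyadic kernel bound $\|\mathcal F^{-1}\bigl(e^{\pm it\langle \xi\rangle}\rho_\lambda\bigr)\|_{L_x^\infty}\lesssim t^{-1}\langle\lambda\rangle^{2}$ via stationary phase/Van der Corput, followed by Young's inequality and summation against the $B_{1,1}^2$ norm. One minor imprecision: near the light cone the curvatures of the phase are $\langle\lambda\rangle^{-3}$ in the radial direction and $\langle\lambda\rangle^{-1}$ in the angular direction (their product recovering the Hessian determinant $\langle\lambda\rangle^{-4}$), rather than $\langle\lambda\rangle^{-2}$ tangentially, but this does not affect the resulting $t^{-1}\langle\lambda\rangle^{2}$ bound.
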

Here $B_{1,1}^2$ is the inhomogeneous Besov space defined by $\left\{ f : \|f\|_{B_{1,1}^2}:= \sum_{\lam\ge 1}\lam^2 \|P_{\lam} f\|_{L_x^1}  < \infty\right\}$.
	%\begin{lem}\label{infty-esti}
	%	Let $0< \gam \le 1$. Then for any $\mathbb C$-valued functions $u \in L_x^2 \cap L_x^\infty$
	%	\begin{align}
	%	\||x|^{-\gam} * |u|^2 \|_{L_x^\infty} \les \|u\|_{L_x^2}^{2-\gam} \|u\|_{L_x^\infty}^{\gam}.
	%	\end{align}
	%\end{lem}
	%\begin{proof}
	%Using H\"older's inequality we get for any $\gamma < r_1 < 2\gamma$ and $r_2 > 2$
	%\begin{align*}
	%\||x|^{-\gamma} * |u|^2\|_{L_x^\infty} &=  \sup_{x\in \mathbb{R}^2}\left|\int |x-y|^{-\gam} |u(y)|^2  dy\right| \\
	% &\les \||x|^{-\gam}\|_{L_x^{\frac{r_1}{\gamma}}(|x|\le R)}\|u\|_{L_x^{\frac{r_1}{r_1-\gamma}}} \|u\|_{L_x^\infty} + \||x|^{-\gam}\|_{L_x^\frac{r_2}\gamma(|x|\ge R)}\|u\|_{L_x^2} \|u\|_{L_x^\frac{2r_2}{r_2-2\gamma}}\\
	%&\les R^{\frac{\gamma(2 - r_1)}{r_1}}\|u\|_{L_x^2}^{2-\frac{2\gamma}{r_1}} \|u\|_{L_x^\infty}^{\frac{2\gamma}{r_1}} + R^{\frac{\gamma(2 - r_2)}{r_2}}\|u\|_{L_x^2}^{2-\frac{2\gamma}{r_2}} \|u\|_{L_x^\infty}^{\frac{2\gamma}{r_2}}.\\
	%\end{align*}
	%By taking $R = \|\psi\|_{L_x^2} \|\psi\|_{L_x^\infty}^{-1}$ we get the desired result.
	%\end{proof}

\begin{lem}\label{infty-esti}
	Let $0< \gam < 1$. Then for any $\mathbb C$-valued functions $u_1 \in L_x^2,\,u_2 \in L_x^2 \cap L_x^\infty$
	\begin{align}
	\||x|^{-\gam} * (u_1 \overline{u_2}) \|_{L_x^\infty} \les \|u_1\|_{L_x^2} \|u_2\|_{L_x^2}^{1-\gam} \|u_2\|_{L_x^\infty}^{\gam}.
	\end{align}
\end{lem}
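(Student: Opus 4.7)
The plan is to prove this by the standard splitting of the Riesz potential into near-field and far-field pieces, followed by optimization in the splitting radius. This is a two-dimensional Hardy--Littlewood--Sobolev-type bound, where the fact that $\gamma < 1$ is crucial for local integrability of $|y|^{-2\gamma}$.

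First I would fix $x \in \mathbb R^2$ and $R > 0$, and split
\begin{align*}
\bigl||\cdot|^{-\gamma} * (u_1 \overline{u_2})(x)\bigr| \le \int_{|x-y| \le R} \frac{|u_1(y)||u_2(y)|}{|x-y|^\gamma}\,dy + \int_{|x-y| > R} \frac{|u_1(y)||u_2(y)|}{|x-y|^\gamma}\,dy =: I_{\rm near} + I_{\rm far}.
\end{align*}
For $I_{\rm near}$, pull out $\|u_2\|_{L_x^\infty}$ and apply Cauchy--Schwarz in $y$, using the radial integral
$$
\int_{|y| \le R} |y|^{-2\gamma}\,dy = \frac{\pi}{1-\gamma} R^{2-2\gamma},
$$
which is finite precisely because $\gamma < 1$. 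This yields $I_{\rm near} \lesssim \|u_1\|_{L_x^2}\|u_2\|_{L_x^\infty} R^{1-\gamma}$. For $I_{\rm far}$, bound $|x-y|^{-\gamma} \le R^{-\gamma}$ on the region of integration and apply Cauchy--Schwarz to get $I_{\rm far} \le R^{-\gamma}\|u_1\|_{L_x^2}\|u_2\|_{L_x^2}$.

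The last step is to optimize the bound
$$
\|u_1\|_{L_x^2}\bigl(\|u_2\|_{L_x^\infty} R^{1-\gamma} + \|u_2\|_{L_x^2} R^{-\gamma}\bigr)
$$
in $R$. The optimal choice is $R \sim \|u_2\|_{L_x^2}/\|u_2\|_{L_x^\infty}$ (which is well-defined whenever $u_2 \not\equiv 0$; the degenerate cases are trivial), producing both terms of the order $\|u_1\|_{L_x^2}\|u_2\|_{L_x^2}^{1-\gamma}\|u_2\|_{L_x^\infty}^{\gamma}$. Taking the supremum over $x$ gives the claim.

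There is essentially no obstacle here; this is a standard interpolation argument, and the only thing to watch is that the local integrability constant blows up as $\gamma \uparrow 1$, which is consistent with the hypothesis $\gamma < 1$ and explains why the endpoint case $\gamma = 1$ requires the separate $H^{s,\delta}$ weighted argument invoked later in the paper.
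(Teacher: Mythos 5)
Your proof is correct and follows essentially the same route as the paper: split the convolution at a radius $R$, estimate the near-field by Cauchy--Schwarz against $\||x|^{-\gamma}\|_{L^2(|x|\le R)}\sim R^{1-\gamma}$ together with $\|u_2\|_{L_x^\infty}$, and then choose $R=\|u_2\|_{L_x^2}\|u_2\|_{L_x^\infty}^{-1}$. The only (harmless) difference is in the far-field term, where you use the trivial pointwise bound $|x-y|^{-\gamma}\le R^{-\gamma}$ plus Cauchy--Schwarz, whereas the paper applies H\"older with an auxiliary exponent $r>\,2$ and interpolates $\|u_2\|_{L_x^{2r/(r-2\gamma)}}$ between $L_x^2$ and $L_x^\infty$; both yield the same bound after optimizing in $R$, and your version is if anything slightly simpler.
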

\begin{proof}
	Using H\"older's inequality we get for any  $r > 2$
	\begin{align*}
	\||x|^{-\gamma} * (u_1 \overline{u_2})\|_{L_x^\infty} &=  \sup_{x\in \mathbb{R}^2}\left|\int |x-y|^{-\gam} |u_1(y) \overline{u_2}(y)|  dy\right| \\
	&\les \||x|^{-\gam}\|_{L_x^2(|x|\le R)}\|u_1\|_{L_x^2} \|u_2\|_{L_x^\infty} + \||x|^{-\gam}\|_{L_x^\frac{r}\gamma(|x|\ge R)}\|u_1\|_{L_x^2} \|u_2\|_{L_x^\frac{2r}{r-2\gamma}}\\
	&\les R^{1- \gam}\|u_1\|_{L_x^2} \|u_2\|_{L_x^\infty} + R^{\frac{\gamma(2 - r)}{r}}\|u_1\|_{L_x^2}\|u_2\|_{L_x^2}^{1-\frac{2\gamma}{r}} \|u_2\|_{L_x^\infty}^{\frac{2\gamma}{r}}.	
	\end{align*}
	By taking $R = \|u_2\|_{L_x^2} \|u_2\|_{L_x^\infty}^{-1}$ we get the desired result.
\end{proof}

If $u_1 = u_2$, then  we can cover the case $\gamma = 1$ as follows.
\begin{lem}\label{infty-esti-1}
For any $\mathbb C$-valued functions $u \in L_x^2 \cap L_x^\infty$ we have
	\begin{align}
	\||x|^{-1} *|u|^2 \|_{L_x^\infty} \les \|u\|_{L_x^2} \|u\|_{L_x^\infty}.
	\end{align}
\end{lem}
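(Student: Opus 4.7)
The plan is to mimic the split-ball argument of Lemma \ref{infty-esti} but, since in two dimensions the critical kernel $|x|^{-1}$ fails to be locally $L^2$, I will not apply Cauchy--Schwarz on the inner ball. Instead I will exploit the fact that $|x|^{-1}$ is locally $L^1$ on $\mathbb R^2$ and pull the $L^\infty$ norm of $u$ out pointwise on the inner piece.

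Fix $x \in \mathbb R^2$ and $R>0$ to be chosen, and split
\begin{align*}
\bigl(|x|^{-1} * |u|^2\bigr)(x) = \int_{|x-y|\le R} |x-y|^{-1} |u(y)|^2\,dy + \int_{|x-y| > R} |x-y|^{-1} |u(y)|^2\,dy.
\end{align*}
For the inner piece, estimate $|u(y)|^2 \le \|u\|_{L_x^\infty}^2$ and use the identity
$\int_{|z|\le R} |z|^{-1}\,dz = 2\pi R$
in $\mathbb R^2$, to get a bound of $2\pi R \|u\|_{L_x^\infty}^2$. For the outer piece, bound $|x-y|^{-1} \le R^{-1}$ and the remaining $|u|^2$ integral by $\|u\|_{L_x^2}^2$, giving $R^{-1}\|u\|_{L_x^2}^2$.

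Combining the two contributions yields
\begin{align*}
\bigl(|x|^{-1} * |u|^2\bigr)(x) \lesssim R \|u\|_{L_x^\infty}^2 + R^{-1}\|u\|_{L_x^2}^2.
\end{align*}
Taking the supremum in $x$ and optimizing in $R$ by choosing $R = \|u\|_{L_x^2}\|u\|_{L_x^\infty}^{-1}$ (which is legitimate provided $u\not\equiv 0$; the case $u\equiv 0$ is trivial) gives the desired bound $\||x|^{-1}*|u|^2\|_{L_x^\infty} \lesssim \|u\|_{L_x^2}\|u\|_{L_x^\infty}$. There is no real obstacle here; the only point worth flagging is that the previous lemma's H\"older--based splitting breaks down at $\gamma=1$ because $|z|^{-1}\notin L^2_{\mathrm{loc}}(\mathbb R^2)$, and the fix is simply to replace Cauchy--Schwarz on the inner ball by the pointwise $L^\infty$ bound on $|u|^2$.
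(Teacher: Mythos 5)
Your proof is correct and follows essentially the same route as the paper: split the convolution at radius $R$, estimate the piece near the singularity and the piece away from it separately, and optimize with $R=\|u\|_{L_x^2}\|u\|_{L_x^\infty}^{-1}$, exactly as the paper does. The only difference is that you use the endpoint H\"older pairing ($L^1$ of the kernel against $\|u\|_{L_x^\infty}^2$ on the inner ball, $L^\infty$ of the kernel against $\|u\|_{L_x^2}^2$ outside), whereas the paper takes intermediate exponents $1<r_1<2$ and $r_2>2$; your endpoint version is slightly cleaner and equally valid.
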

\begin{proof}
	For any $1 < r_1 < 2$ and $r_2 > 2$ we have
	\begin{align*}
	\||x|^{-1} * |u|^2\|_{L_x^\infty} &=  \sup_{x\in \mathbb{R}^2}\left|\int |x-y|^{-\gam} |u(y)|^2  dy\right| \\
	&\les \||x|^{-1}\|_{L_x^{r_1}(|x|\le R)}\|u\|_{L_x^{r_1'}} \|u\|_{L_x^\infty} + \||x|^{-1}\|_{L_x^{r_2}(|x|\ge R)}\|u\|_{L_x^2} \|u\|_{L_x^\frac{2r_2}{r_2-2}}\\
	&\les R^{\frac2{r_1}- 1}\|u\|_{L_x^2}^{2- \frac2{r_1}}\|u\|_{L_x^\infty}^\frac2{r_2} + R^{\frac2{r_2} - 1}\|u\|_{L_x^2}^{2-\frac{2\gamma}{r_2}} \|u\|_{L_x^\infty}^{\frac{2}{r_2}}.	
	\end{align*}
	By taking $R = \|u_2\|_{L_x^2} \|u_2\|_{L_x^\infty}^{-1}$ we finish the proof.
\end{proof}

If $u_1 \neq u_2$, then we need a regularity and a space decay assumption. Then we have
\begin{lem}\label{infty-weight}
	Let $0 < s, \delta  \ll 1$. Then for any $\mathbb C$-valued functions $u_1 \in H^{s, \delta}$ and $u_2 \in L_x^\infty$
	\begin{align}
	\||x|^{-1} * (u_1 \overline{u_2}) \|_{L_x^\infty} \les \|u_1\|_{H^{s, \delta}} \|u_2\|_{L_x^\infty}.
	\end{align}
\end{lem}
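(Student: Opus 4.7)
The plan is as follows. For fixed $x\in\mathbb R^2$, I will split the convolution into near and far contributions,
$$
\bigl|(|x|^{-1}*(u_1\overline{u_2}))(x)\bigr| \le I_1(x) + I_2(x),
$$
where $I_1(x)$ is the integral over $|x-y|\le 1$ and $I_2(x)$ is the integral over $|x-y|>1$. The guiding observation is that in $\mathbb R^2$ the kernel $|y|^{-1}$ lies in $L^{p'}_{\mathrm{loc}}$ precisely when $p'<2$, and in $L^{r'}$ at infinity precisely when $r'>2$. On each piece I will apply H\"older, placing $u_2$ in $L^\infty$, the kernel in the appropriate Lebesgue class, and $u_1$ in its dual; the two resulting norms on $u_1$ are exactly those provided by the $H^{s,\delta}$ norm, and the Hölder constants are translation invariant, so the final bound is uniform in $x$.

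For the near piece $I_1$, I would invoke the Sobolev embedding $H^s(\mathbb R^2)\hookrightarrow L^p$ with $p=\tfrac{2}{1-s}$, so $p'=\tfrac{2}{1+s}<2$, giving
$$
I_1(x) \le \|u_2\|_{L_x^\infty}\,\|u_1\|_{L^p}\,\bigl\||y|^{-1}\chi_{|y|\le 1}\bigr\|_{L^{p'}} \les \|u_1\|_{H^s}\,\|u_2\|_{L_x^\infty}.
$$

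For the far piece $I_2$, I would pick $r=\tfrac{2}{1+\delta/2}$, whose dual $r'=\tfrac{2}{1-\delta/2}>2$ makes $\||y|^{-1}\chi_{|y|>1}\|_{L^{r'}}$ finite (this is precisely where $r'>2$ cures the logarithmic divergence of $|y|^{-1}$ at infinity in $\mathbb R^2$). To dominate $\|u_1\|_{L^r}$ by the weighted $L^2$ norm, I would apply H\"older one more time with conjugate exponents $(\tfrac{2}{2-r},\tfrac{2}{r})$:
$$
\|u_1\|_{L^r}^r = \int (1+|y|)^{-\delta r}\bigl((1+|y|)^\delta |u_1|\bigr)^r\, dy \le \bigl\|(1+|y|)^{-\delta r}\bigr\|_{L^{2/(2-r)}}\,\|(1+|y|)^\delta u_1\|_{L_x^2}^r.
$$
A direct computation gives $\delta r\cdot\tfrac{2}{2-r}=4>2$ for the chosen $r$, so the first factor is a finite constant in $\mathbb R^2$ and hence $I_2(x) \les \|(1+|y|)^\delta u_1\|_{L_x^2}\|u_2\|_{L_x^\infty}$ uniformly in $x$. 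Summing the two pieces yields the desired bound.

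The only delicate point is finding a single $r$ for which both of the required integrabilities hold simultaneously: control of $\|u_1\|_{L^r}$ via the weight needs $r>\tfrac{2}{1+\delta}$, while integrability of the tail of $|y|^{-1}$ forces $r<2$. The symmetric choice $r=\tfrac{2}{1+\delta/2}$ satisfies both for every $\delta>0$, and the analogous role near the origin is played by the parameter $s>0$ through the Sobolev embedding. Because the lemma is invoked in the critical case $\gamma=1$ of Theorem \ref{non-scat}, where Lemma \ref{infty-esti-1} cannot be used (here $u_1\neq u_2$), precisely both ingredients of the $H^{s,\delta}$ norm are indispensable: the regularity $s$ handles the local singularity of the kernel, while the weight $\delta$ handles its slow decay at infinity.
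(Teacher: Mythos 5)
Your proof is correct and follows essentially the same route as the paper: split the kernel at radius $1$, use H\"older with the Sobolev embedding $H^s\hookrightarrow L^{2/(1-s)}$ near the singularity, and H\"older against the weight $(1+|y|)^{\delta}$ (with exactly the same exponent $r=\tfrac{2}{1+\delta/2}$) for the tail. The only difference is that you spell out the weighted H\"older step that the paper states without computation.
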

\begin{proof}
By H\"older's inequality and Sobolev embedding we see that
	\begin{align*}
	\||x|^{-1} * (u_1 \overline{u_2})\|_{L_x^\infty} &\les \||x|^{-1}\|_{L_x^\frac2{1+s}(|x|\le 1)}\|u_1\|_{L_x^\frac{2}{1-s}} \|u_2\|_{L_x^\infty} + \||x|^{-1}\|_{L_x^\frac{2}{2-\frac\delta2}(|x|\ge 1)}\|u_1\|_{L_x^\frac2{1+\frac\delta2}(|x|\ge 1)} \|u_2\|_{L_x^\infty}\\
	&\les \|u_1\|_{H^s} \|u_2\|_{L_x^\infty} + \|(1+|x|)^\delta u_1\|_{L_x^2} \|u_2\|_{L_x^\infty} \les \|u_1\|_{H^{s, \delta}}\|u_2\|_{L_x^\infty}.	
	\end{align*}
	\end{proof}

 If $0 < \gamma < 1$, then from Lemmas \ref{infty-esti}, \ref{time-decay} and mass conservation it follows that
	\begin{align*}
	|Y_2| &\les   \left| \left<   V*\Big(\left<\psi, \beta \psi\right> - \left<\psii, \beta \psii\right>\Big) \beta \psii, \psii \right>_{L_x^2}\right|\\
	&\les  \left| \int  \Big(\left<\psi, \beta \psi\right> - \left<\psii, \beta \psii\right>\Big) V*\left<\beta \psii, \psii \right> dx\right|\\
	&\les  \|\psi - \psii\|_{L_x^2} \left( \|\psi\|_{L_x^2} + \|\psii\|_{L_x^2} \right)\|\psii\|_{L_x^2}\|\psii\|_{L_x^2}^{1-\gam}\|\psii\|_{L_x^\infty}^{\gam}\\
	&\les \|\psi - \psii\|_{L_x^2} \left( \|\psi_0\|_{L_x^2} + \|\vps\|_{L_x^2} \right)\|\vps\|_{L_x^2}^{2-\gam}t^{-\gam}\left(\|\vp_+\|_{B_{1,1}^2} + \|\vp_-\|_{B_{1,1}^2}\right)^{\gam}
	\end{align*}
	and
	\begin{align*}
	|Y_3| &\les \left| \left<   V*\left<\psi, \beta \psi\right> \beta (\psi -\psii), \psii \right>_{L_x^2} \right|\\
	&\les \left| \int   \left<\psi, \beta \psi\right> \left(V* \left<\beta (\psi -\psii),\psii \right>\right)  dx\right|\\
	&\les \|\psi\|_{L_x^2}^2 \|\psi-\psii\|_{L_x^2}\|\psii\|_{L_x^2}^{1-\gam}\|\psii\|_{L_x^\infty}^\gam\\
	&\les \|\psi_0\|_{L_x^2}^{2}\|\psi-\psii\|_{L_x^2}\|\vps\|_{L_x^2}^{1-\gam}t^{-\gam}\left(\|\vp_+\|_{B_{1,1}^2} + \|\vp_-\|_{B_{1,1}^2} \right)^{\gam}.
	\end{align*}
	Therefore we get
	\begin{align}\label{yj}
	|Y_j| = o(t^{-\gam})
	\end{align}
	for $j=1,2$. Then \eqref{y1} and \eqref{yj} conclude \eqref{lowerbound} for the case $0 < \gamma < 1$.
	
If $\gamma = 1$, then by Lemma \ref{infty-esti-1} we have
\begin{align*}
	|Y_2| &\les  \left| \int  \Big(\left<\psi, \beta \psi\right> - \left<\psii, \beta \psii\right>\Big) |\cdot|^{-1}*\left<\beta \psii, \psii \right> dx\right|\\
	&\les  \|\psi - \psii\|_{L_x^2} \left( \|\psi\|_{L_x^2} + \|\psii\|_{L_x^2} \right)\|\psii\|_{L_x^2}\|\psii\|_{L_x^\infty}\\
	&\les \|\psi - \psii\|_{L_x^2} \left( \|\psi_0\|_{L_x^2} + \|\vps\|_{L_x^2} \right)\|\vps\|_{L_x^2}t^{-1}\left(\|\vp_+\|_{B_{1,1}^2} + \|\vp_-\|_{B_{1,1}^2}\right) = o(t^{-1})
	\end{align*}	
and by Lemma \ref{infty-weight} and mass conservation
\begin{align*}
	|Y_3| &\les \left| \left<   |\cdot|^{-1}*\left<\psi, \beta \psi\right> \beta (\psi -\psii), \psii \right>_{L_x^2} \right|\\
	&\les \left| \int   \left<\psi, \beta \psi\right> \left(V* \left<\beta (\psi -\psii),\psii \right>\right)  dx\right|\\
	&\les \|\psi\|_{L_x^2}^2 \|\psi-\psii\|_{H^{s,\delta}}\|\psii\|_{L_x^\infty}\\
	&\les \|\psi_0\|_{L_x^2}^{2}\|\psi-\psii\|_{H^{s, \delta}}t^{-1}\left(\|\vp_+\|_{B_{1,1}^2} + \|\vp_-\|_{B_{1,1}^2} \right) = o(t^{-1}).
	\end{align*}
This completes the proof of Theorem \ref{non-scat}.
%%%%%%%%%%%%%%%%%%%%%%%%%%%%%%%%%%%%%%%%%%%%%%%%%%%%%%%%%%%%%%%%%%%%%%%%%%%%%%%%%%%%%%%%%%%%%%%%%%%%%%%%%%%%%%%%%%%%%%%%%%%%%%%%%%%%%%%%%%%%%%%%%%%%%%%%%%%%%%%%%%%%%%%%%%%%%%%%%%%%%%%%%%%%%%%%%%%%%%%%%%%%%%%%%%%%%%%%%%%%%%%%%%%%%%%%%%%%%%%%%%%%%%%%%%%%%%%%%%%%%%%%%%%%%%%%%%%%%%%%%%%%%%%%%%%%%%%%%%%%%%%%%%%%%%%%%%%%%%%%%%%%%%%%%%%%%%%%%%%%%%%%%%%%%%%%%%%%%%%%%%%%%%%%%%%%%%%%%%%%%%%%%%%%%%%%%%%%%%%%%%%%%%%%%%%%%%%%%%%%%%%%%%%%%%%%%%%%%%%%%%%%%%%%%%%%%%%%%%%%%%%%%%%%%%%%%%%%%%%%

%%%%%%%%%%%%%%%%%%%%%%%%%%%%%%%%%%%%%%%%%%%%%%%%%%%%%%%%%%%%%%%%%%%%%%%%%%%%%%%%%%%%%%%%%%%%%%%%%%%%%%%%%%%%%%%%%%%%%%%%%%%%%%%%%%%%%%%%%%%%%%%%%%%%%%%%%%%%%%%%%%%%%%%%%%%%%%%%%%%%%%%%%%%%%%%%%%%
\section*{Acknowledgements}
This work was supported in part by NRF-2018R1D1A3B07047782(Republic of Korea).
%%%%%%%%%%%%%%%%%%%%%%%%%%%%%%%%%%%%%%%%%%%%%%%%%%%%%%%%%%%%%%%%%%%%%%%%%%%%%%%%%%%%%%%%%%%%%%%%%%%%%%%%%%%%%%%%%%%%%%%%%%%%%%%%%%%%%%%%%%%%%
%%%%%%%%%%%%%%%%%%%%%%%%%%%%%%%%%%%%%%%%%%%%%%%%%%%%%%%%%%%%%%%%%%%%%%%%%%%%%%%%%%%%%%%%%%%%%%%%%%%%%%%%%%%%%%%%%%%%%%%%%%%%%%%%%%%%%%%%%%%%%

%%%%%%%%%%%%%%%%%%%%%%%%%%%%%%%%%%%%%%%%%%%%%%%%%%%%%%%%%%%%%%%%%%%%%%%%%%%%%%%%%%%%%%%%%%%%%%%%%%%%%%%%%%%%%%%%%%%%%%%%%%%%%%%%%%%%%%%%%%%%%%%%%%%%%%%%%%%%%%%%%%%%%%%%%%%%%%%%%%%%%%%%%%%%%%%%%%%%%%%%%%%%%%%%%%%%%%%%%%%%%%%%%%%%%%%%%%%%%%%%%%%%%%%%%%%%%%%%%%%%%%%%%%%%%%%%%%%%%%%%%%%%%%%%%%%%%%%%%%%%%%%%%%%%

\end{document}